\pgfplotsset{compat=newest}
\DeclareSymbolFontAlphabet{\mathbb}{AMSb}
\numberwithin{equation}{section}
\def\@cite#1#2{[\textbf{#1}\if@tempswa , #2\fi]}  
\def\@biblabel#1{[#1]}                
\newtheorem {theorem}{Theorem}[section]
\newtheorem {proposition}[theorem]{Proposition}
\newtheorem {lemma}[theorem]{Lemma}
\newtheorem {corollary}[theorem]{Corollary}
\newtheorem {remark}[theorem]{Remark}
\newcommand{\var}{\operatorname{var}}
\newcommand{\dint}{\textup{d}}
\def\EE{\mathbb{E}}
\def\NN{\mathbb{N}}
\def\CC{\mathbb{C}}
\def\PP{\mathbb{P}}
\def\RR{\mathbb{R}}
\def\ZZ{\mathbb{Z}}
\def\bX{\mathbf{X}}
\def\bv{\mathbf{v}}
\def\arsinh{\mathrm{arsinh}}
\newcommand{\vol}{\operatorname{Vol}}
\newcommand{\conv}{\operatorname{conv}}
\let\@fnsymbol\@alph
\begin{document}

\title{\bfseries Random convex chains through the lens\\ of analytic combinatorics}

\author{Florian Besau\footnotemark[1]\;\;\;and Christoph Th\"ale\footnotemark[2]}

\renewcommand{\thefootnote}{\fnsymbol{footnote}}

\footnotetext[1]{
  Friedrich-Schiller-Universität Jena, Germany. Email: florian@besau.xyz
}
\footnotetext[2]{%
  Ruhr-Universität Bochum, Germany. Email: christoph.thaele@rub.de
}

\date{}

\maketitle

\begin{abstract}
Consider the triangle $T$ with vertices $(0,0)$, $(0,1)$, and $(1,0)$. 
The lower boundary of the convex hull of $(0,1)$, $(1,0)$, together with $n$ independent uniformly distributed random points in $T$, is called a random convex chain and denoted by $T_n$. We study the random variable $f_0(T_n)$, the number of vertices of this chain. Our first result gives an explicit expression for the bivariate generating function of the probabilities $\mathbb{P}(f_0(T_n)=k+2)$ in terms of the Gaussian hypergeometric function. 
Building on this analytic representation, we apply a careful singularity analysis to derive a variety of limit theorems for $f_0(T_n)$, including a quantitative central limit theorem, a large deviation principle as well as a precise asymptotics for the probabilities $\mathbb{P}(f_0(T_n)=k+2)$. Conceptually, our results establish a novel bridge between stochastic geometry and methods from analytic combinatorics.

  \smallskip\noindent
  \textbf{Keywords.} analytic combinatorics, central limit theorem, convex chain, geometric probability, large deviations, singularity analysis.
  
  \smallskip\noindent
  \textbf{MSC 2020.} Primary:
  52A22, 
  60D05; 
  Secondary:
  05A15, 
  32A05, 
  33C05, 
  60F10. 
\end{abstract}


\section{Introduction}

\subsection{Motivation}

The most classical model for a random polytope arises by taking the convex hull $K_n$ of $n$ independently and uniformly distributed random points in a convex body $K\subset\RR^d$. The study of these random polytopes is one of the central themes of geometric probability, tracing its origins to questions first posed by Sylvester in the 19th century. The asymptotic study of random polytopes started with the pioneering works \cite{RenyiSulanke1,RenyiSulanke2} of Rényi and Sulanke in the 1960s on random polygons in the plane. Core questions include the asymptotic behavior, as $n\to\infty$, of basic geometric and combinatorial functionals of the random polytope such as the missing volume $\vol(K\setminus K_n)$ or the number of vertices $f_0(K_n)$.

For convex bodies $K$ in $\RR^d$ with a smooth boundary, one now has sharp asymptotics for expectations, variance bounds, and central limit theorems for these basic functionals, while for polytopal $K$ the behavior is different and governed by the combinatorics of the normal fan of the underlying container polytope. For example, regarding the number of vertices, the picture that has emerged is by now classical: for convex bodies with a smooth boundary, the expected number of vertices grows of the order $c_d\Omega(K)n^{d-1\over d+1}$, whereas in the polytopal case it only grows of the logarithmic order $\widetilde{c}_dT(K)(\log n)^{d-1}$. 
Here, $c_d$ and $\widetilde{c}_d$ are explicit dimension dependent constants, whereas $\Omega(K)$ stands for the affine surface area of $K$ and $T(K)$ is the number of maximal chains of cones in the normal fan if $K$ itself is a polytope. Related variance bounds and central limit theorems have also been developed, see \cite{BaranyReitzner:Variance, CalkaYukich:2014, CalkaYukich:2017, FodorPapvari:2023, FodorVigh:2018, GusakovaReitznerThaele, PardonCLT, ReitznerCLT, Reitzner:EfronStein}. We also refer to the survey articles \cite{BaranySurvey,HugSurvey,ReitznerSurvey,SchneiderSurvey, SW:2023} for an overview and a general guide to the literature.

In dimension two, several phenomena admit exact formulas and more explicit descriptions. A classical family of questions asks for the probability that $n$ independent and uniform random points in a fixed convex set are in convex position, i.e., all lie on the boundary of their convex hull. For the following two convex polygons this was solved in \cite{Valtr:1995,Valtr:1996}:
\begin{align*}
\PP(\text{$n$ uniform points in a triangle are in convex position})
&=\frac{2^n}{(2n)!}\, \frac{(3n-3)!}{((n-1)!)^3} = n^{-2n(1+o(1))}, \\
\PP(\text{$n$ uniform points in a parallelogram are in convex position})
&=\bigg(\frac{1}{n!}\,\frac{(2n-2)!}{((n-1)!)^2}\bigg)^{\!\!2} = n^{-2n(1+o(1))}.
\end{align*}
Analogous results and sharp estimates have been obtained for other convex bodies in $\RR^2$. For instance, the disk has been studied in~\cite{Marckert:2017}, while in \cite{BaranySylvester:1999} it was shown for all convex sets $K\subseteq\RR^2$ with unit area that
$$
\lim_{n\to\infty}n^2\,\PP(\text{$n$ uniform points in $K$ are in convex position})^{1/n} = \frac{e^2}{4}A(K)
$$
where $A(K)=\sup\{\Omega(L):L\subseteq K\}$ is the extremal affine perimeter, that is, the supremum of the affine perimeter $\Omega(L)$ of all convex sets $L\subseteq K$; see \cite{EglerWerner:2024} for a new functional version of this notion. 

A key structural insight, going back to at least Groeneboom \cite{Groeneboom1988} and developed further in later works, is that for polygonal convex bodies the main contribution to the boundary complexity of the random convex hull comes from nearly independent corner processes; see Figure \ref{fig:square} for an illustration in the case of a square. This motivates isolating and analyzing a single corner, which naturally leads to the model of \textit{random convex chains} that we discuss next.

\begin{figure}[t]
    \centering
    \includegraphics[width=0.4\linewidth]{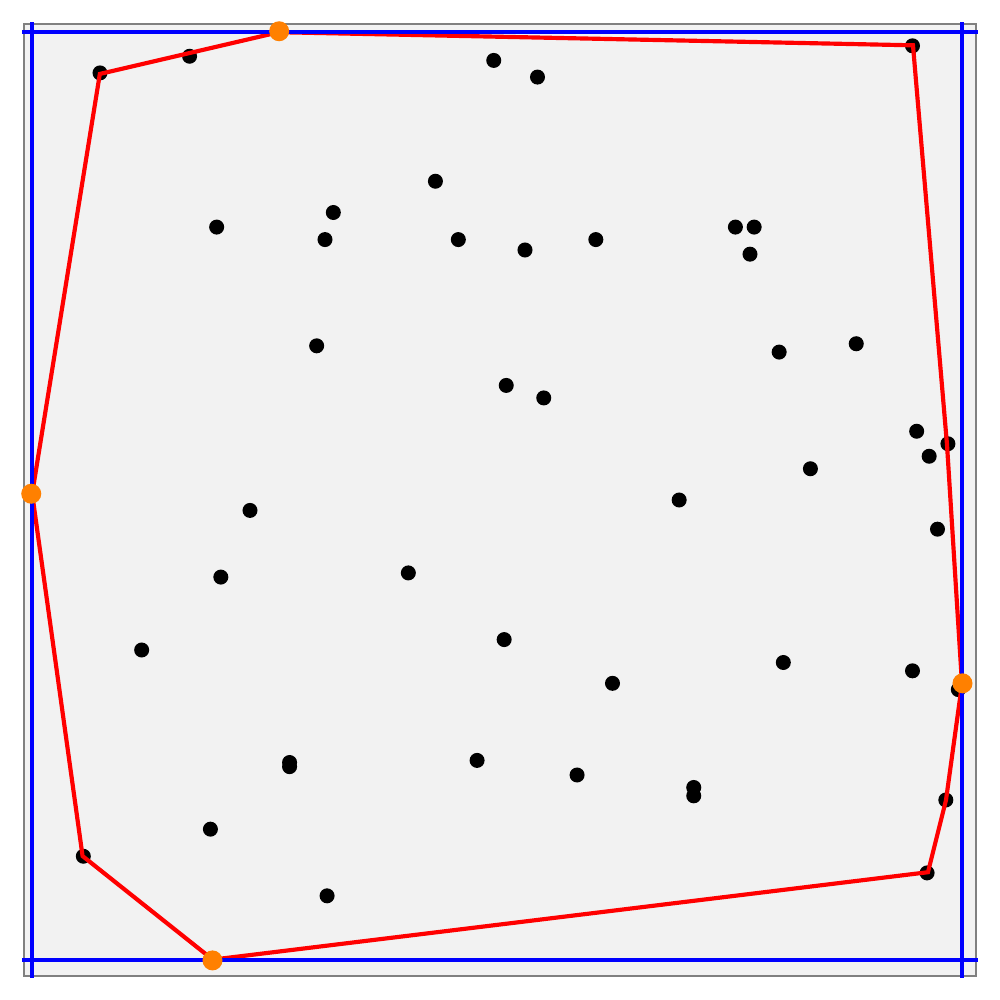}
    \caption{Decomposition of a random polygon in a square into four convex chains.}
    \label{fig:square}
\end{figure}

\medskip

Consider the triangle $T\subset \RR^2$ spanned by the vertices $\bv_0:=(0,0), \bv_1:=(1,0)$ and $\bv_2:=(0,1)$. Let $n\in\NN$, generate independent and uniform random points $\bX_1,\dotsc,\bX_n\in T$, and consider the convex hull
$$
T_n:=\conv \{\bv_1,\bv_2, \bX_1,\dotsc,\bX_n\},
$$
see Figure \ref{fig:chains}. The lower boundary of $T_n$ is known as a random convex chain.
Buchta \cite{Buchta:2006} established that the probability that $T_n$ has exactly $k+2$ vertices (including $\bv_1$ and $\bv_2$) is given by
\begin{equation}\label{eq:pnk}
    p_k^{(n)} = \PP(f_0(T_n)=k+2) = \sum_{i_1+\dotsc+i_k = n}\prod_{m=1}^k \frac{i_m}{\binom{I_m+1}{2}},\qquad I_m:=i_1+\ldots+i_m,
\end{equation}
for $k\in\{0,\dotsc,n\}$, where the sum is taken over $k$-tuples $(i_1,\dotsc,i_k)$ of positive integers. The particular value
\begin{equation}\label{eq:pnn}
    p_n^{(n)} = \prod_{m=1}^n{1\over\binom{m+1}{2}} = \frac{2^n}{n!(n+1)!} = n^{-2n(1+o(1))}
\end{equation}
that the $n$ points are in convex position has previously been computed in \cite{BaranyRoteSteigerZhang:2000}. Buchta also computed the moments and provided recursion schemes for higher moments, together with asymptotics for the mean and variance in \cite{Buchta:2012}. The paper \cite{GT:2021} developed Buchta’s approach further by studying the probability generating functions
\begin{equation*}
G_n(t):=\sum_{k=1}^n p_k^{(n)}\,t^k,\qquad t\in\RR.
\end{equation*}
These authors showed a three-term recursion for the $G_n$, identified a link to families of orthogonal polynomials, and proved the remarkable fact that for each fixed $n$ all zeros of $G_n$ are non-positive and real. As a consequence, the coefficient sequence $\{p_k^{(n)}\}_{k=1}^n$ is a so-called Pólya frequency (PF) sequence, which entails strong probabilistic corollaries as described in the survey article \cite{Pitman}.

\begin{figure}[t]
    \centering
    \includegraphics[width=0.3\linewidth]{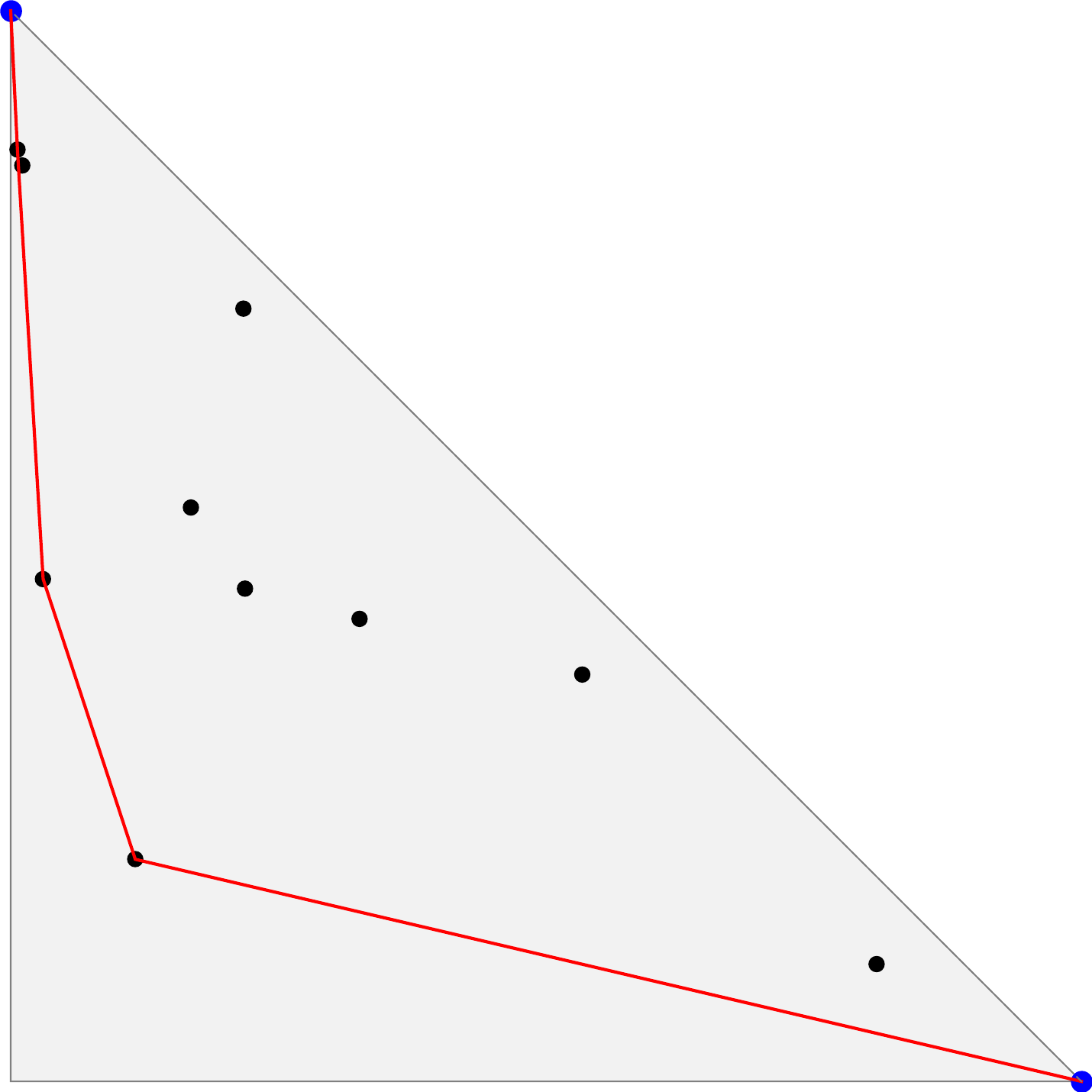}\quad
    \includegraphics[width=0.3\linewidth]{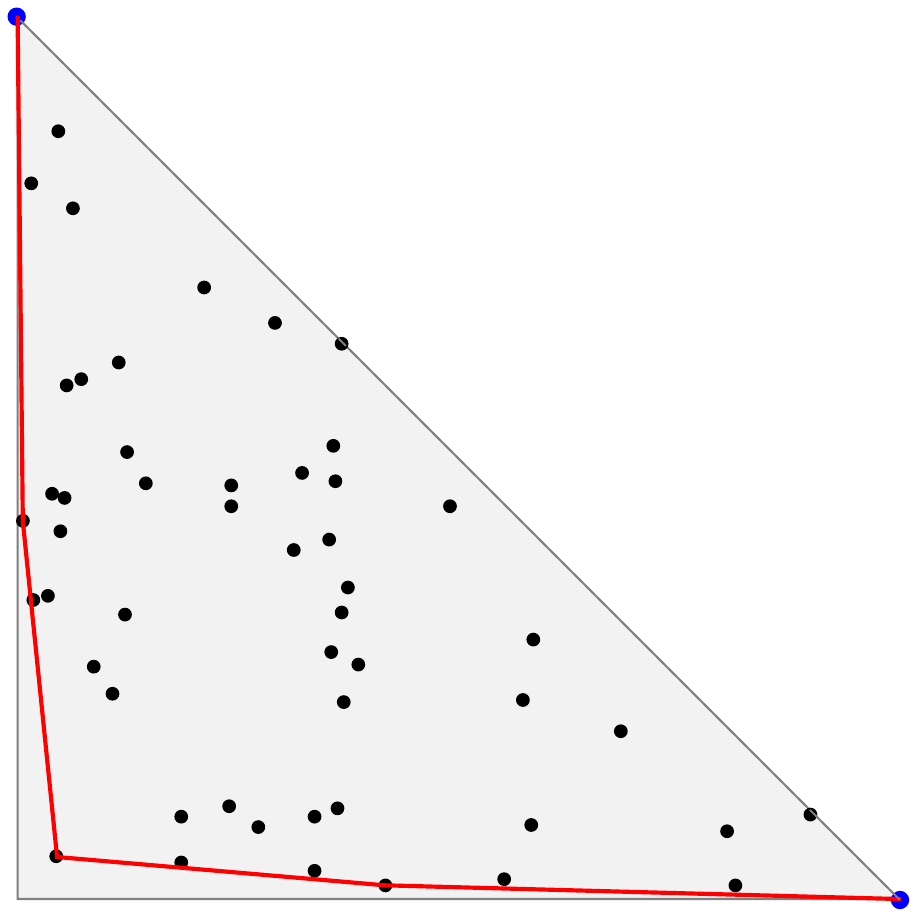}\quad
    \includegraphics[width=0.3\linewidth]{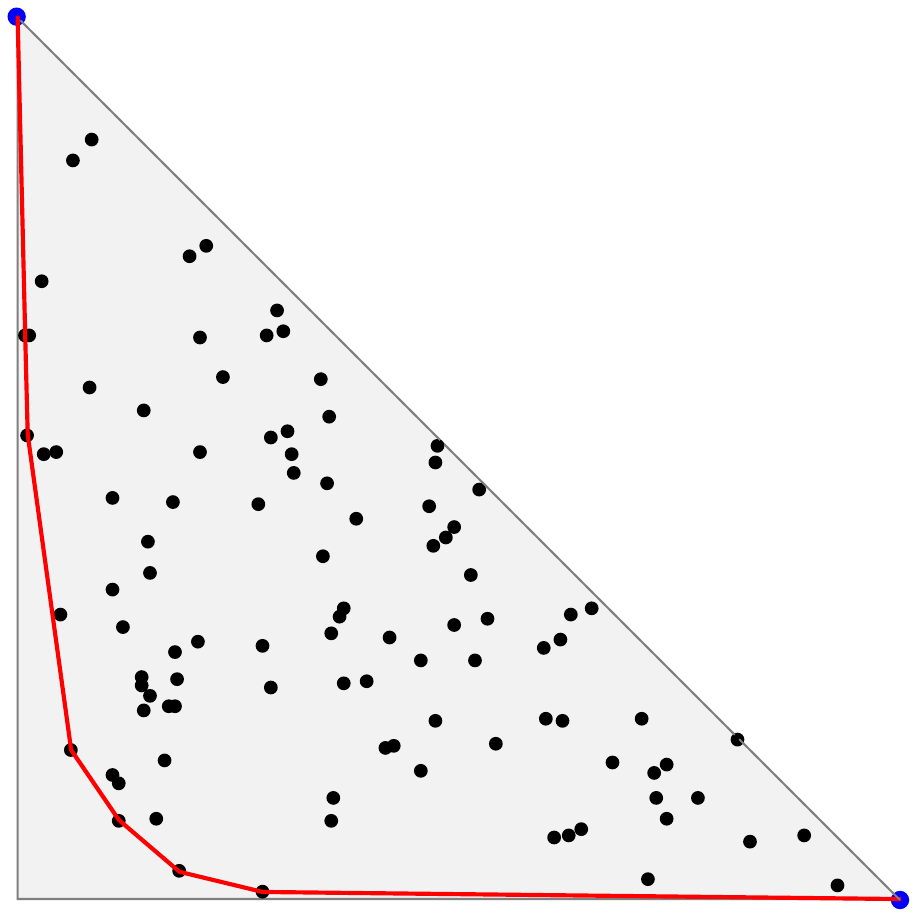}
    \caption{Simulations of a random convex chain generated by $n=10$ (left), $n=50$ (middle) and $n=100$ (right) points.}
    \label{fig:chains}
\end{figure}

In the present paper, we go one step further and revisit the random convex chain in a triangle through the lens of analytic combinatorics, complementing Buchta’s exact distribution formula \cite{Buchta:2006} and the approach of \cite{GT:2021}.

\subsection{Overview of main results}

Our first main result is an explicit and closed form for the \emph{bivariate} generating function
\begin{equation*}
    F(u,z) := \sum_{n=0}^\infty G_n(z) u^n = 1+ uz + \sum_{n=2}^\infty \sum_{k=1}^n p_k^{(n)} z^k u^n,\qquad u,z\in\CC,
\end{equation*}
expressed in terms of the Gaussian hypergeometric function ${_2F_1}$, see Theorem \ref{thm:SolODE}. This analytic representation packages the whole family of distributions of the random variables $\{f_0(T_n)\}_{n\ge 0}$ into a single object. Building on this, we carry out a detailed singularity analysis of $F(u,z)$ to obtain a collection of probabilistic limit theorems for $f_0(T_n)$:
\begin{itemize}
    \item[(i)] Precise cumulant asymptotics for the number of vertices, complementing the simple moment asymptotics of \cite{Buchta:2012}. Namely, for every integer $r\ge1$, we show that the $r$-th cumulant $\kappa_n^{(r)}$ of $f_0(T_n)$ satisfies
    \[
    \kappa_n^{(r)}
    \sim \Bigg(\sum_{k=1}^{r} (-1)^{k-1}\,S(r,k)\,
    \frac{2^{k}\,(2k-2)!}{3^{2k-1}\,(k-1)!}\Bigg)\, \log n,\qquad \text{as }n\to\infty,
    \]
    where $S(r,k)$ are the Stirling numbers of the second kind, see Theorem \ref{thm:CumulantAsymptotics}.
    
    \item[(ii)] A quantitative central limit theorem for the number of vertices with an explicit error term. More precisely, for 
    \[
        Z_n:=\frac{f_0(T_n)-\EE f_0(T_n)}{\sqrt{\var(f_0(T_n))}},
    \]
    we obtain
    \[
    \sup_{x\in\RR}\bigl|\PP(Z_n\le x)-\Phi(x)\bigr|=O\Big(\tfrac{1}{\sqrt{\log n}}\Big),
    \]
    where $\Phi$ is the standard Gaussian distribution function, see Theorem \ref{thm:CLT-rate} and Figure \ref{fig:Histogram}.
    
    \item[(iii)] A moderate and large deviation principle derived from the dominant singularity, identifying the exponential decay of off-typical fluctuations, are presented in Theorem \ref{thm:MDP} and Theorem \ref{thm:LDP}. More precisely, we show that the sequence of random variables $\tfrac{f_0(T_n)}{\log n}$ satisfies a large deviation principle on $\RR$ with speed $\log n$
    and a good rate function
    \[
    I(x)=
    \begin{cases}
    +\infty, & x<0,\\[2mm]
    x\log\!\big(\tfrac{x}{2}\big)+x\,\operatorname{arsinh}(2x)-x-\tfrac12\sqrt{1+4x^2}+\tfrac32, & x\ge0,
    \end{cases}
    \]
    see Figure \ref{fig:LDP} for a plot of the strictly convex function $I(x)$.
    
    \item[(iv)] Precise asymptotics for the probabilities $p_k^{(n)}=\PP(f_0(T_n)=k+2)$. For example, for each fixed integer $k\in\ZZ_{\geq 0}$, in Theorem \ref{thm:probab} we show that
    \[
    p_{k+1}^{(n)}
    \sim \frac{2^{k+1}}{k!}\,\frac{(\log n)^{k}}{n}\qquad \text{as }n\to\infty.
    \]
\end{itemize}

\begin{figure}[t]
    \centering
    \includegraphics[width=0.45\linewidth]{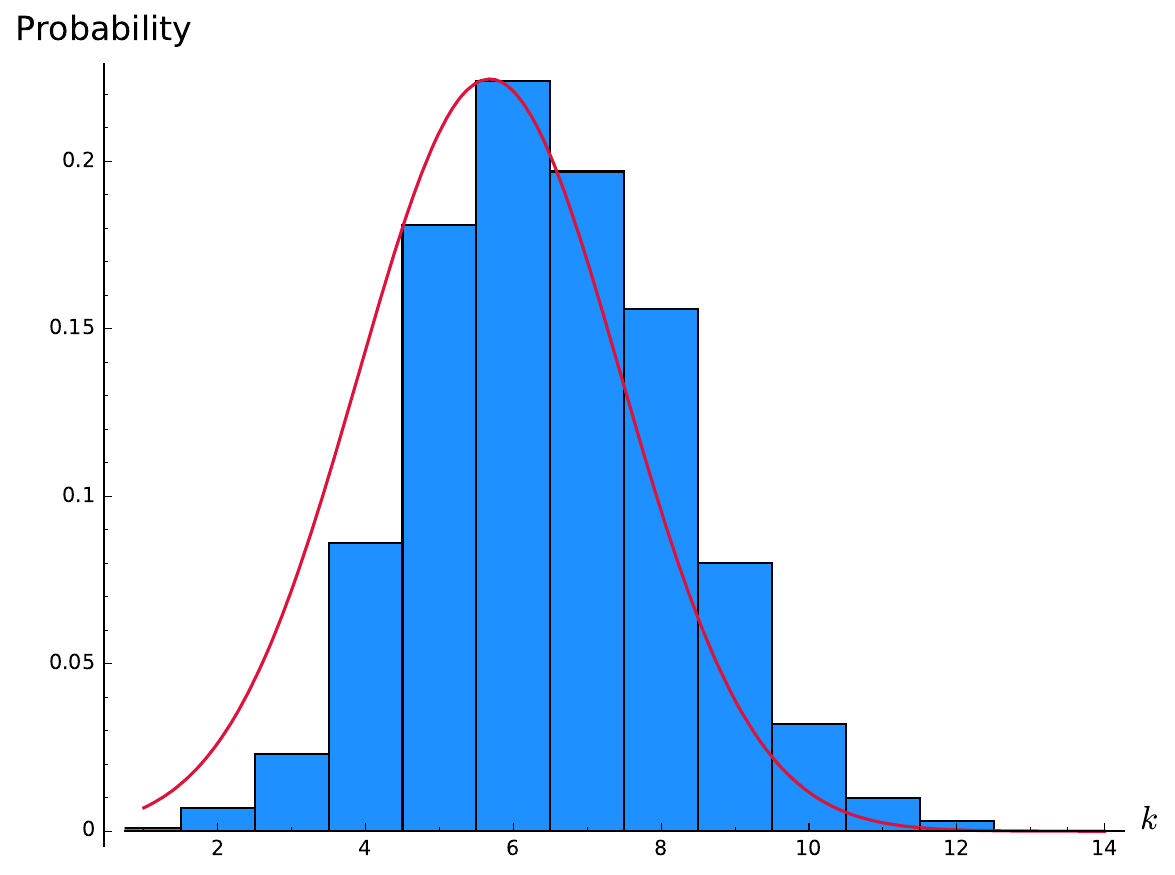}\quad
    \includegraphics[width=0.45\linewidth]{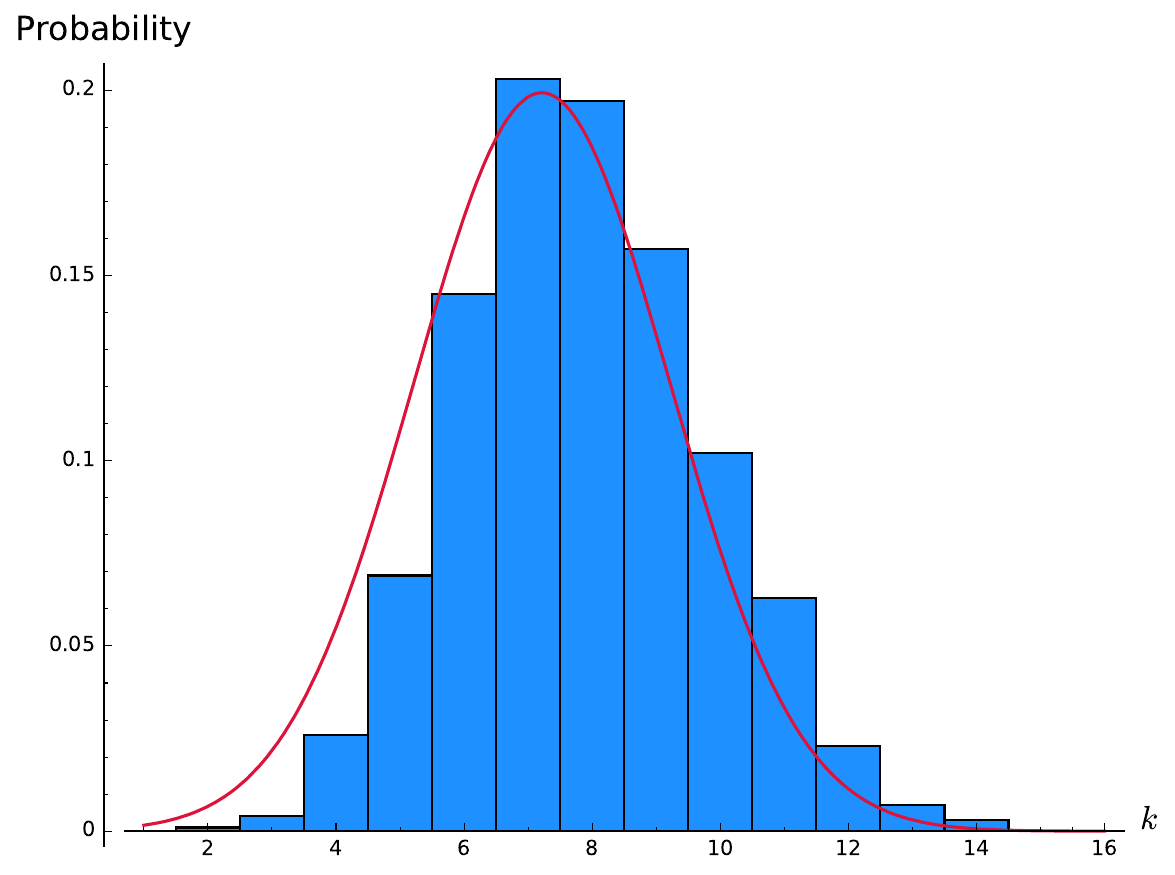}
    \caption{Histogram of the probability $p_k^{(n)}=\PP(f_0(T_n)=k+2)$ generated by $1000$ runs with $n=5,000$ (left) and $n=50,000$ (right). The red curve is the plot of the Gaussian density function $\varphi_n(x) = \frac{1}{\sigma_n\sqrt{2\pi}} \exp\left(-\frac{(x-\mu_n)^2}{2\sigma_n^2}\right)$, where $\mu_n = \frac{2}{3}\log n \sim \EE f_0(T_n)$ and $\sigma_n = \sqrt{\frac{10}{27} \log n} \sim \sqrt{\var f_0(T_n)}$, see Section~3.} 
    \label{fig:Histogram}
\end{figure}

In particular, our method yields simple and direct derivations of several consequences previously obtained via PF–factorizations and cumulant bounds in \cite{GT:2021}, while at the same time providing finer information that was not within the reach of previously existing methods. A key highlight in our results is the new large deviation principle we establish for the sequence of random variables $f_0(T_n)/\log n$, whose rate function admits a simple probabilistic interpretation. 
To the best of our knowledge, this work presents the first large deviation principle specifically for random convex hulls of random points.
The second highlight is the precise asymptotics for the probabilities $p_k^{(n)}$, which we obtain not only for fixed $k$, but also for sequences $k_n$ that tend to infinity with $n$.

On a conceptual level, our results establish a novel link between stochastic geometry and analytic combinatorics. We demonstrate how distributional questions about random polygons can be reformulated as analytic problems about generating functions, with their singularities encoding the relevant limit theorems. This viewpoint not only unifies and simplifies earlier approaches, but also extends the reach of analytic combinatorics to a setting that lies outside its typically discrete domain. Whereas analytic combinatorics is traditionally concerned with finite combinatorial classes and exact and asymptotic enumerations, the present setting is genuinely continuous. The fact that analytic combinatorics methods nevertheless yield precise probabilistic information highlights both the flexibility of the analytic framework and its potential to address a broader class of problems in geometric probability.

\medskip

The remaining parts of this paper are structured as follows. In Section \ref{sec:BivariageGF} we show that the bivariate generating function $F(u,z)$ satisfies an ordinary differential equation (ODE) of second order, whose solution will be expressed in terms of the Gaussian hypergeometric function. Using the method of transfer of singularities, this leads to detailed asymptotic expansions of $F(u,z)$. These are used as a common basis for all probabilistic results that we develop in Section \ref{sec:Applications}.

\medskip

\textbf{Note.} Shortly after completing this paper, we became aware that independently Gusakova and Muranova \cite{GusMur} also obtained an explicit expression for the bivariate generating function of the probabilities $p_k^{(n)}=\PP(f_0(T_n)=k+2)$, in order to investigate the moments of the area cut off from the triangle $T$ by the random chain $T_n$. However, apart from Theorem~\ref{thm:SolODE}, our results appear to be disjoint from those presented in \cite{GusMur}.

\section{Bivariate generating function and singularity transfer}\label{sec:BivariageGF}

\subsection{Set-up and ODE}

We recall that $T$ is the triangle with vertices $\bv_0:=(0,0), \bv_1:=(1,0)$ and $\bv_2:=(0,1)$ and that $T_n$ denotes the convex hull of $\bv_1,\bv_2$ and $n\geq 0$ independent and uniformly distributed random points in $T$. The lower boundary of $T_n$ is the random convex chain we consider.

In the proof of Theorem 1 in \cite{Buchta:2012} it was shown that the probabilities $p_k^{(n)}=\PP(f_0(T_n)=k+2)$ in \eqref{eq:pnk} satisfy the recurrence relation
\begin{equation}\label{eq:RectProbab}
    \binom{n+1}{2} p_k^{(n)} - 2 \binom{n}{2} p_k^{(n-1)}+\binom{n-1}{2} p_k^{(n-2)} = p_{k-1}^{(n-1)},
\end{equation}
see also \cite[Equation (3)]{GT:2021}. In Theorem 1 of \cite{GT:2021} this was used to derive a recurrence relation for the probability generating polynomials of the random variables $f_0(T_n)$. More precisely, the probability generating polynomial of $f_0(T_n)$ is defined as
\begin{equation*}
    G_n(z) := \sum_{k=1}^n p_k^{(n)} z^k \qquad \text{for $z\in\CC$ and $n\geq 2$}.
\end{equation*}  
For convenience we extend this family by setting $G_1(z):=z$ and $G_0(z):=1$. In the language of generating polynomials, the recurrence relation \eqref{eq:RectProbab} translates into
\begin{equation}\label{eq:RectGF}
    G_n(z) = (a_n z+ b_n) G_{n-1}(z) - c_n G_{n-2}(z), \qquad \text{$n\geq 2$, $z\in\CC$},
\end{equation}
with
\begin{equation*}
    a_n := \frac{2}{n(n+1)}, \quad b_n := 2\frac{n-1}{n+1}\quad \text{and} \quad c_n := \frac{(n-1)(n-2)}{n(n+1)},
\end{equation*}
see \cite[Theorem 1]{GT:2021}. This recurrence was used in \cite{GT:2021} for \textit{real} $z$ to show that each $G_n$ is a polynomial with only real roots.

In the present paper we go one step further and consider the bivariate generating function of the random variables $f_0(T_n)$ defined as
\begin{equation*}
    F(u,z) := \sum_{n=0}^\infty G_n(z) u^n = 1+ uz + \sum_{n=2}^\infty \sum_{k=1}^n p_k^{(n)} z^k u^n.
\end{equation*}
We emphasize that we regard $F(u,z)$ as an element of the ring $\CC[[u,z]]$ of formal bivariate power series over $\CC$. On the other hand, we note that for $z=1$,
\begin{equation*}
    F(u,1) = \sum_{n=0}^\infty u^n = \frac{1}{1-u}.
\end{equation*}
Thus $F(u,1)$ converges for all $u\in\CC$ with $|u|<1$ and extends naturally to the meromorphic function $(1-u)^{-1}$ on $\CC\setminus\{1\}$ with a pole of first order at $u=1$. Similarly, for $|u|<1$ and $|z|\leq 1$ we find that
\begin{equation*}
    |F(u,z)| \leq \frac{1}{1-|u|},
\end{equation*}
and therefore $F(u,z)$ also defines a bivariate jointly analytic function at least in this domain. The precise region in $\CC^2$ where $F(u,z)$  defines a jointly analytic function will be identified later We refer to \cite[Ch.\ III.~1.]{FS:2009} for background on bivariate generating functions.

Our first result shows that the bivariate generating function $F$ satisfies a linear ODE of second order. To formulate it, we use the differential operator $\Xi := u \frac{\partial }{\partial u}$.

\begin{theorem}[ODE for the bivariate generating function]\label{thm:ODEtheta}
The bivariate generating function $F$ satisfies 
    \begin{equation*}
    (1-u)^2 (\Xi+1)\Xi F = 2uz F.
\end{equation*}
\end{theorem}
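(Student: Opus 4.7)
The plan is to translate the probability recurrence \eqref{eq:RectProbab} into a recurrence for the polynomials $G_n(z)$, then to package this recurrence as an identity in the formal ring $\CC[[u,z]]$, which will yield the claimed ODE after recognising that the operator $(\Xi+1)\Xi$ exactly produces the combinatorial weight $n(n+1)$ appearing on the left-hand side.

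First I would multiply \eqref{eq:RectProbab} by $2z^k$ and sum over $k\geq 0$, so that the recurrence at the level of probability generating polynomials reads
\[
    n(n+1)\,G_n(z) - 2n(n-1)\,G_{n-1}(z) + (n-1)(n-2)\,G_{n-2}(z) = 2z\,G_{n-1}(z),\qquad n\ge 2.
\]
The key observation is then that since $\Xi u^n = n u^n$, one has
\[
    (\Xi+1)\Xi F = \sum_{n=0}^\infty n(n+1)\,G_n(z)\,u^n,
\]
so if one writes $H(u,z):=(\Xi+1)\Xi F(u,z)$ and expands $(1-u)^2 H = H - 2uH + u^2 H$, then the coefficient of $u^n$ on the left-hand side becomes, for $n\geq 2$,
\[
    n(n+1)\,G_n(z) - 2(n-1)n\,G_{n-1}(z) + (n-2)(n-1)\,G_{n-2}(z),
\]
which by the above recurrence is precisely $2z\,G_{n-1}(z)$. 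This matches exactly the coefficient of $u^n$ in $2uz F(u,z)=\sum_{n\ge 1}2z\,G_{n-1}(z)\,u^n$.

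It remains to check the low-order terms $n=0$ and $n=1$ separately. The $u^0$-coefficient of $(1-u)^2H$ is $0\cdot(0+1)\cdot G_0=0$, matching the vanishing $u^0$-coefficient of $2uzF$. For the $u^1$-coefficient one obtains $1\cdot 2\cdot G_1(z)-2\cdot 0\cdot G_0(z)=2z$, which agrees with $2z\,G_0(z)=2z$ on the right-hand side, using the extensions $G_0(z)=1$ and $G_1(z)=z$.

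The argument is essentially a direct verification; the only real obstacle is the correct bookkeeping of the low-order terms and the observation that applying $(\Xi+1)\Xi$ and multiplying by $(1-u)^2$ reproduces exactly the three-term combination $\binom{n+1}{2},\,2\binom{n}{2},\,\binom{n-1}{2}$ from \eqref{eq:RectProbab} up to the factor~$2$. Once this correspondence is spotted, the identity $(1-u)^2(\Xi+1)\Xi F = 2uz F$ follows by comparing coefficients in $\CC[[u,z]]$.
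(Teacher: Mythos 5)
Your proof is correct and follows essentially the same route as the paper's: both rewrite the recurrence so that the combinatorial weights become $n(n+1)$, $2n(n-1)$, $(n-1)(n-2)$, observe that $(\Xi+1)\Xi$ exactly produces the factor $n(n+1)$ on $u^n$, and then compare coefficients of $u^n$ in $(1-u)^2(\Xi+1)\Xi F$ against $2uzF$, treating $n\le 1$ separately. The only cosmetic difference is that you start from the probability recurrence \eqref{eq:RectProbab} directly, whereas the paper starts from its polynomial reformulation \eqref{eq:RectGF}; these are of course equivalent.
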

\begin{proof}
We start by rewriting the recurrence relation \eqref{eq:RectGF} in the form
\begin{equation}\label{eqn:recurrence}
    \frac{n(n+1)}{2} G_n(z) - n(n-1) G_{n-1}(z)  + \frac{(n-1)(n-2)}{2} G_{n-2}(z) = z G_{n-1}(z),
\end{equation}
for all $n\geq 2$ and $z\in\CC$. Next, we compute
\begin{equation*}
    \Xi F = \sum_{n=1}^\infty G_n(z) n u^n, \quad
    (\Xi+1)\Xi F = \sum_{n=1}^\infty G_n(z) n(n+1) u^n = 2uz + \sum_{n=2}^\infty G_n(z) n(n+1) u^n,
\end{equation*}
and thus
\begin{equation*}
    u(\Xi+1)\Xi F = \sum_{n=2}^\infty G_{n-1}(z) n(n-1) u^n,\quad 
    u^2 (\Xi+1)\Xi F = \sum_{n=3}^\infty G_{n-2}(z) (n-1)(n-2) u^n.
\end{equation*}
Using \eqref{eqn:recurrence} this leads to
\begin{align*}
    &\frac{1}{2} (1-u)^2 (\Xi+1)\Xi F - uz \\
    & = \sum_{n=2}^\infty \left[\frac{n(n+1)}{2} G_n(z) - n(n-1) G_{n-1}(z) + \frac{(n-1)(n-2)}{2} G_{n-2}(z)\right] u^n\\
    & = \sum_{n=2}^\infty z G_{n-1}(z) u^n = uz F- uz.
\end{align*}
As a result, we have
\begin{equation*}
    (1-u)^2 (\Xi+1)\Xi F = 2uz F,
\end{equation*} 
which completes the proof.
\end{proof}

It will be convenient to rephrase the result of Theorem \ref{thm:ODEtheta} in a traditional way. For a function $F\in\CC[[u,z]]$ we write $F_u$ for ${\partial\over\partial u}F(u,z)$.

\begin{corollary}[ODE for the bivariate generating function]
The function $F$ satisfies the homogeneous linear ODE of second order
\begin{equation}\label{eqn:ODE}
    F_{uu} + P F_u  + Q F = 0
\end{equation}
with initial conditions $F(0,z)=1$, $F_u(0,z)=z$, where the coefficients are the meromorphic functions
\begin{equation*}
    P := \frac{2}{u}, \qquad Q:= -\frac{2z}{u(u-1)^2}.
\end{equation*}
The ODE has regular singularities at $u=0$, $u=1$ and $u=\infty$.
\end{corollary}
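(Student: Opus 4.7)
The plan is to unwind the Euler operator identity of Theorem \ref{thm:ODEtheta} into standard form and then classify the resulting singularities by Fuchs's criterion.

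First I would expand $(\Xi+1)\Xi F$ explicitly. Since $\Xi = u\,\partial/\partial u$, one obtains $\Xi F = u F_u$ and then
\[
(\Xi+1)\Xi F \;=\; (u\,\partial_u + 1)(u F_u) \;=\; u^2 F_{uu} + 2 u F_u.
\]
Substituting this into the identity $(1-u)^2 (\Xi+1)\Xi F = 2 u z F$ of Theorem \ref{thm:ODEtheta} and dividing both sides by $u^2(1-u)^2$ (which is a formally valid operation after clearing the prefactor $u$ on the right, as one can check on power series coefficients) yields
\[
F_{uu} + \frac{2}{u}\,F_u \;-\; \frac{2z}{u(u-1)^2}\,F \;=\; 0,
\]
which is exactly the stated ODE with $P = 2/u$ and $Q = -2z/(u(u-1)^2)$. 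The initial conditions are read off directly from the defining series $F(u,z) = \sum_{n\ge 0} G_n(z)\, u^n$: evaluating at $u = 0$ gives $F(0,z) = G_0(z) = 1$ and differentiating once in $u$ before setting $u = 0$ gives $F_u(0,z) = G_1(z) = z$, both by the convention fixed just after \eqref{eq:RectGF}.

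For the singularity classification, the finite candidate singular points are those where $P$ or $Q$ fails to be holomorphic, namely $u = 0$ and $u = 1$. At each of these I would apply Fuchs's criterion that $(u-u_0)P(u)$ and $(u-u_0)^2 Q(u)$ be holomorphic at $u_0$. A one-line check gives $u P(u) = 2$, $u^2 Q(u) = -2uz/(u-1)^2$, $(u-1) P(u) = 2(u-1)/u$, and $(u-1)^2 Q(u) = -2z/u$, all analytic at the respective points, so both $u=0$ and $u=1$ are regular singularities.

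The only non-immediate step is the point at infinity. Here I would perform the standard change of variable $v = 1/u$, compute the transformed coefficients $\widetilde P(v)$ and $\widetilde Q(v)$ via the identities $\partial_u = -v^2 \partial_v$ and $\partial_u^2 = v^4 \partial_v^2 + 2 v^3 \partial_v$, and then apply Fuchs's criterion at $v=0$ to $\widetilde P$ and $\widetilde Q$. This is the most computationally involved part, but it remains routine bookkeeping and produces no surprises: the resulting $\widetilde P$ has at most a simple pole and $\widetilde Q$ at most a double pole at $v = 0$, confirming that $u=\infty$ is also a regular singularity. The main conceptual obstacle throughout is thus not any single computation but merely keeping track of signs and of the composition of derivatives when unfolding $\Xi$ and when changing coordinates near infinity.
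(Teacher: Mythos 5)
Your proposal is correct and follows essentially the same route as the paper: expand $(\Xi+1)\Xi F = u^2F_{uu}+2uF_u$, divide the Euler-form identity by $u^2(1-u)^2$, read the initial conditions off $G_0, G_1$, and verify regular-singular behaviour at $0$, $1$, $\infty$ (the paper at $\infty$ also uses $v=1/u$, obtaining $G_{vv} - \tfrac{2z}{v(v-1)^2}G=0$, which has $\widetilde{P}\equiv 0$ and a simple pole of $\widetilde{Q}$ at $v=0$, consistent with your deferred check). The only stylistic difference is that you invoke Fuchs's criterion explicitly for the finite singularities and defer the computation at infinity; the paper simply reads off pole orders and writes down the transformed equation.
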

\begin{proof}
We have $\Xi F=uF_u$ and
$$
(\Xi+1)\Xi F=uF_u+u(uF_u)_u = uF_u+u(F_u+uF_{uu}) = 2uF_u+u^2F_{uu}.
$$
Thus
$$
(1-u)^2(\Xi+1)\Xi F-2uzF = (1-u)^2(2uF_u+u^2F_{uu})-2uzF = 0.
$$
Division by $u^2(1-u)^2$, which is legitimate for $u\notin\{0,1\}$, yields the result as a formal identity in $\CC[[u,z]]$.

The ODE has a regular singularities at $u=0$ since it is a pole of first order of $P$ and $Q$ and at $u=1$ since it is a pole of second order of $Q$. Substituting $v=1/u$ gives the ODE
\begin{equation*}
    G_{vv} - \frac{2z}{v(v-1)^2} G = 0,
\end{equation*}
for $G(v) = F(1/v,\cdot)$, which has a regular singularity at $v=0$. Thus $u=\infty$ is also a regular singularity of \eqref{eqn:ODE}.
\end{proof}

\subsection{Explicit form of the bivariate generating function}

Our goal in this section is to derive an explicit solution of the ODE \eqref{eqn:ODE}. For that purpose, we need the Gaussian hypergeometric function, which is defined as the power series
\begin{equation}\label{def:hypergeo}
    {_2F_1}(a,b;c;z) := \sum_{n=0}^\infty{a^{\overline{n}} b^{\overline{n}} \over c^{\overline{n}}}{z^n\over n!} = 1 +{ab\over c}z+{ab(a+1)(b+1)\over c(c+1)}{z^2\over 2}+ \ldots,
\end{equation}
where $q^{\overline{n}}:=q(q+1)\cdots(q+n-1)$ is the rising factorial with the convention $q^{\overline{0}}:=1$. 
Here, $a,b,c\in\CC$ with $c\notin\{0,-1,-2,\ldots\}$. The series converges absolutely in the unit disc $|z|<1$. By analytic continuation, ${_2F_1}(a,b;c;z)$ extends uniquely to a multivalued analytic function on $\CC\setminus[1,\infty)$, see \cite[Section 5.2]{Temme}.

\begin{theorem}[Solution of the ODE for the bivariate generating function]\label{thm:SolODE}
The unique solution of the ODE \eqref{eqn:ODE} satisfying the given initial conditions is 
\begin{equation*}
    F(u,z) = \frac{{_2F_1}(1-\alpha,-\alpha;2;u)}{(1-u)^{\alpha}}, \qquad \alpha:=\frac{\sqrt{8z+1}-1}{2}.
\end{equation*}
It extends to a jointly analytic function on $\{(u,z)\in\CC^2:u\notin [1,\infty),z\notin(-\infty,-1/8]\}$.
\end{theorem}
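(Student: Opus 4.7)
The ODE \eqref{eqn:ODE} is Fuchsian with three regular singular points at $0, 1, \infty$, and any such equation can be reduced to the Gaussian hypergeometric equation by a suitable gauge transformation. My plan is to perform an indicial analysis at $u = 1$, make the corresponding ansatz $F = (1-u)^{-\alpha} H$, and identify $H$ as a hypergeometric function. Substituting $F \sim (1-u)^\lambda$ into \eqref{eqn:ODE} (or equivalently, passing to the variable $v = 1-u$) and balancing leading orders yields the indicial equation $\lambda(\lambda-1) - 2z = 0$, whose roots are $\lambda_\pm = (1 \pm \sqrt{1+8z})/2$. In terms of the theorem's $\alpha = (\sqrt{8z+1} - 1)/2$ these are exactly $-\alpha$ and $\alpha + 1$, which motivates the ansatz $F(u,z) = (1-u)^{-\alpha} H(u,z)$.

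Substituting this ansatz into \eqref{eqn:ODE} and applying the product rule yields, after routine algebra,
\begin{equation*}
H_{uu} + \bigg[\frac{2}{u} + \frac{2\alpha}{1-u}\bigg]\,H_u + \frac{\alpha(\alpha-1)\,u + 2(\alpha - z)}{u(1-u)^2}\,H = 0.
\end{equation*}
The $H_u$-coefficient already matches the hypergeometric template $(c - (a+b+1)u)/(u(1-u))$ with $c = 2$ and $a+b = 1 - 2\alpha$. The decisive step is that the $H$-coefficient can be recast in the hypergeometric form $-ab/(u(1-u))$ if and only if the spurious factor $(1-u)^{-2}$ cancels, which forces $\alpha^2 + \alpha - 2z = 0$ — precisely the quadratic defining $\alpha$. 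Under this relation, one reads off $ab = \alpha(\alpha-1)$, and combining with $a+b = 1 - 2\alpha$ yields $\{a,b\} = \{1-\alpha, -\alpha\}$. Hence $H$ satisfies the Gaussian hypergeometric equation with parameters $(1-\alpha, -\alpha; 2)$.

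To close the argument I would invoke uniqueness, verify the initial conditions, and address the analytic extension. The indicial exponents at $u = 0$ of both the original and the hypergeometric equation are $0$ and $1-c = -1$, so the analytic solution space at the origin is one-dimensional; normalising $H(0,z) = 1$ therefore selects $H = {_2F_1}(1-\alpha, -\alpha; 2; u)$ uniquely. The initial conditions then check out directly: $F(0,z) = H(0,z) = 1$, while $F_u(0,z) = \alpha H(0,z) + H_u(0,z) = \alpha + ab/c = \alpha(\alpha+1)/2 = z$ by the defining relation $\alpha^2 + \alpha = 2z$. The joint analyticity on $\{(u,z) \in \CC^2 : u \notin [1,\infty),\, z \notin (-\infty,-1/8]\}$ follows from three standard facts: the principal branch of $(1-u)^{-\alpha}$ is analytic on $\CC \setminus [1, \infty)$; the Gaussian hypergeometric function extends single-valuedly to $u \in \CC \setminus [1, \infty)$ and depends analytically on its parameters (since $c = 2$ is not a non-positive integer); and $\alpha(z) = (\sqrt{8z+1} - 1)/2$, with the principal square root, is analytic on $\CC \setminus (-\infty,-1/8]$. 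The main obstacle is the reduction step itself: the substitution algebra requires careful bookkeeping, and it is precisely the cancellation of the spurious $(1-u)^{-2}$ in the $H$-coefficient that forces the quadratic equation for $\alpha$ and pins down the hypergeometric parameters.
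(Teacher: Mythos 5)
Your proof is correct and follows essentially the same route as the paper: the ansatz $F = (1-u)^{-\alpha}H$, the reduction to the hypergeometric equation via the indicial relation $\alpha(\alpha+1) = 2z$, identification of the parameters $(1-\alpha,-\alpha;2)$, and verification of the initial conditions and joint analyticity. The only cosmetic differences are that you motivate the ansatz by an explicit indicial analysis at $u=1$ (rather than the paper's ansatz with $\alpha$ initially undetermined) and that you establish uniqueness from the one-dimensionality of the analytic solution space at $u=0$, whereas the paper argues that the other root of the indicial equation is incompatible with $F(0,z)=1$; both arguments are sound.
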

\begin{proof}
The ODE \eqref{eqn:ODE} is a Riemann--Papperitz equation and can be transformed into a hypergeometric ODE as explained in \cite[p.\ 116]{Temme}.
To do so, we first write the ODE in the form
\begin{equation}\label{eqn:ODE2}
    u(1-u)^2 F_{uu} + 2(1-u)^2 F_{u} - 2z F  =0, \qquad F(0,z)=1, F_u(0,z)=z.
\end{equation} 
Solutions of this ODE can therefore have singularities at $u=0$, $u=1$ or at $u=\infty$. The solution we are interested in is regular at $u=0$ and converges on the unit disk $|u|<1$. Thus, the asymptotic behavior of the solution $F$ around $u=1$ must be of the form $C/(1-u)^\alpha$, with suitable $C$ and $\alpha$. We therefore make the ansatz
$$
F(u,z) = {A(u,z)\over (1-u)^{\alpha(z)}}.
$$
Then
\begin{align*}
    F_u &= A_u (1-u)^{-\alpha} +\alpha A (1-u)^{-\alpha-1},\\
    F_{uu} &= A_{uu} (1-u)^{-\alpha} + 2\alpha A_u (1-u)^{-\alpha-1} + \alpha(\alpha+1) A (1-u)^{-\alpha-2}.
\end{align*}
Plugging this into \eqref{eqn:ODE2} we derive
\begin{align*}
    0 &= u(1-u)^2\big(A_{uu}(1-u)^{-\alpha}+2\alpha A_u (1-u)^{-\alpha-1}+\alpha(\alpha+1)A(1-u)^{-\alpha-2}\big) \\
    &\qquad + 2(1-u)^2\big(A_u(1-u)^{-\alpha}+\alpha A (1-u)^{-\alpha-1}\big) -2zA(1-u)^{-\alpha}\\
    &=(1-u)^{-\alpha}\Big[u(1-u)^2A_{uu}+(2\alpha u(1-u)+2(1-u)^2)A_u + (\alpha(\alpha+1)u+2\alpha (1-u)-2z)A\Big]
\end{align*}
Multiplying with $(1-u)^{\alpha}$, we conclude that $A$ satisfies the ODE 
\begin{equation*}
    u(1-u)^2 A_{uu} + (2\alpha u+ 2(1-u)) (1-u) A_u + (\alpha (\alpha+1) u + 2\alpha (1-u) - 2z) A = 0.
\end{equation*}
Taking $u\to 1^-$ we derive the characteristic (indicial) equation 
\begin{equation}\label{eqn:indicial}
    \alpha(\alpha+1) = 2z.
\end{equation}
Using this, we may factor another $(1-u)$ and arrive at
\begin{align*}
     (1-u)\Big[u(1-u)A_{uu} + (2\alpha u + 2(1-u))A_u + 2\alpha A\Big] = 0,
\end{align*}
or equivalently,
\begin{equation*}
    u(1-u) A_{uu} + (2- (1-2\alpha) u) A_u - \alpha(\alpha-1)  A = 0, \qquad A(0,z)=1, A_u(0,z) = z-\alpha.
\end{equation*}

This is a particular instance of the hypergeometric differential equation
$$
u(1-u)H_{uu} + [c-(a+b+1)u]H_u - ab H = 0
$$
with parameters
\begin{equation*}
    a = 1-\alpha, \quad 
    b = -\alpha, \quad \text{ and }\quad 
    c = 2.
\end{equation*}
A solution is given by the Gaussian hypergeometric function
\begin{equation*}
    A(u,z) = {_2F_1}(1-\alpha,-\alpha;2;u)
\end{equation*}
for the root $\alpha=-\tfrac{1}{2}+\tfrac{1}{2}\sqrt{8z+1}$ of \eqref{eqn:indicial}, see \cite[Section 5.3]{Temme}. We may verify that the initial conditions hold by calculating
\begin{equation*}
    A(0,z) = {_2F_1}(1-\alpha,-\alpha;2;0) = 1,
\end{equation*}
and
\begin{equation*}
    A_u(0,z) = \frac{\alpha(\alpha-1)}{2} {_2F_1}(2-\alpha,1-\alpha;3;0) = \frac{\alpha(\alpha-1)}{2} = z-\alpha,
\end{equation*}
where the last equality follows from the characteristic equation \eqref{eqn:indicial}. 

We observe that the choice of the other root of the characteristic equation \eqref{eqn:indicial} is not compatible with the initial conditions
Indeed, if we pick $\alpha=-\tfrac{1}{2}-\tfrac{1}{2}\sqrt{8z+1}$, then the prefactor $(1-u)^{-\alpha}$ grows like $(1-u)^{(1+\sqrt{1+8z})/2}$ near $u=1$, which forces $A(0,z)=0$ and contradicts $F(0,z)=1$. Thus, the unique solution of the ODE \eqref{eqn:ODE2} is
\begin{equation*}
    F(u,z) = \frac{{_2F_1}(1-\alpha,-\alpha;2;u)}{(1-u)^{\alpha}}
\end{equation*}
for $\alpha=-\tfrac{1}{2}+\tfrac{1}{2}\sqrt{8z+1}$. 

The natural brach cut for ${_2F_1}$ is $[1,\infty)$, whereas the square root $\sqrt{8z+1}$ introduces a  branch cut starting from $-1/8$ and extending to $-\infty$. As a result, the solution extends uniquely to a multivalued jointly analytic function on $\{(u,z)\in\CC^2:u\notin [1,\infty),z\notin(-\infty,-1/8]\}$.
\end{proof}

\begin{remark}[Alternative representations of the solution]
    Using the functional relations of the hypergeometric function in \cite[Sec.\ 5.2.1]{Temme} we can rewrite our solution as follows:

    \begin{enumerate}
        \item By Euler's transformation formula for the hypergeometric function, we have
    \begin{equation*}
        F(u,z) 
        = (1-u)^{1+\alpha} {_2F_1}(2+\alpha,1+\alpha;2;u).
    \end{equation*}

    \item Using the Pfaff transformation, we may also write the solution as
    \begin{equation*}
        F(u,z) = {_2F_1}\left(-\alpha,1+\alpha;2;\frac{u}{1-u}\right).
    \end{equation*}
     
    \end{enumerate}
\end{remark}

\subsection{Asymptotic estimates by transfer of singularities}

In this section, we derive a precise version of the asymptotic behavior of the probability generating function $G_n(z)$ of the random variable $f_0(T_n)$ as $n\to\infty$. These asymptotics are the key tool for the probabilistic results presented in the next section. To this end, we first establish for a given $z$ (or equivalently $\alpha$) an asymptotic formula for a bivariate generating function $F(u,z)$ as $u\to 1^-$, from which the desired asymptotics for $G_n(z)$ will follow.

\begin{theorem}[Asymptotics for the bivariate generating function]\label{thm:SingExpFuz}
For every $k\in\ZZ_{\ge 0}$ and $I_k:=(k-\tfrac{1}{2},k+\tfrac{1}{2})$, we have for $\alpha=\alpha(z)$,
\begin{equation*}
F(u,z)=
\begin{cases}
\displaystyle
    \frac{K(\alpha)}{(1-u)^{\alpha}}
    +\sum_{j=1}^{k+1} \frac{K(\alpha)a_j(\alpha)}{(1-u)^{\alpha-j}}
    +L(\alpha)(1-u)^{1+\alpha}
    +O\bigl((1-u)^{k+2-\alpha}\bigr), & \text{$\alpha\in I_k$},\\[10pt]
    \displaystyle
    \frac{K(\alpha_0)}{(1-u)^{\alpha_0}}
    +(1-u)^{1+\alpha_0}\big[\,2C_k(\alpha_0)\log(1-u)+O(1)\big]
    +O\bigl((1-u)^{1-\alpha_0}\bigr), & \alpha=\alpha_0=\tfrac12+k.
\end{cases}
\end{equation*}
as $u\to 1^-$, where $C_k(\alpha_0)\in (0,\frac{1}{10})$ is defined by
\begin{equation}\label{eq:ConstCk}
C_k(\alpha_0):=\frac{\Gamma(\alpha_0)^2}{4\pi^2\alpha_0(1+2\alpha_0)\,\Gamma(2\alpha_0)}
=\frac{\Gamma\!\bigl(k+\tfrac12\bigr)^2}{8\pi^2\,(k+1/2)(k+1)\,\Gamma(2k+1)},
\end{equation}
and $K(\alpha),L(\alpha)$ and $a_j(\alpha)$ are defined below in \eqref{def:KandL} and \eqref{def:an}.
\end{theorem}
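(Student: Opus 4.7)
The plan is to start from the explicit representation proved in Theorem \ref{thm:SolODE},
\[
F(u,z)=\frac{{_2F_1}(1-\alpha,-\alpha;2;u)}{(1-u)^{\alpha}},
\]
and to convert this into a local expansion at $u=1$ by invoking the connection formulas of the Gaussian hypergeometric function at its branch point $z=1$. Division by $(1-u)^{\alpha}$ then produces the claimed singular expansion directly, with the dichotomy between the two cases reflecting that of the indicial exponents of the underlying hypergeometric ODE at $u=1$, which (after the substitution $v=1-u$ in \eqref{eqn:ODE2}) are $-\alpha$ and $1+\alpha$, differing by $1+2\alpha$.

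For the generic case $\alpha\in I_k$, Euler's linear connection formula (valid whenever $1+2\alpha$ is not an integer, and extending by continuity to the integer values of $\alpha$ at which both sides degenerate in a compensating manner) gives
\begin{align*}
{_2F_1}(1-\alpha,-\alpha;2;u)
&=\frac{\Gamma(1+2\alpha)}{\Gamma(1+\alpha)\Gamma(2+\alpha)}\,{_2F_1}(1-\alpha,-\alpha;-2\alpha;1-u)\\
&\quad+\frac{\Gamma(-1-2\alpha)}{\Gamma(1-\alpha)\Gamma(-\alpha)}\,(1-u)^{1+2\alpha}\,{_2F_1}(1+\alpha,2+\alpha;2+2\alpha;1-u),
\end{align*}
which identifies $K(\alpha)$ and $L(\alpha)$ of \eqref{def:KandL} as the two $\Gamma$-prefactors. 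I would then expand each hypergeometric series on the right in its convergent Taylor series in $1-u$, read off the coefficients $a_j(\alpha)$ of \eqref{def:an} from the series definition in \eqref{def:hypergeo}, truncate the first series at order $k+1$, and divide by $(1-u)^{\alpha}$. Standard tail bounds on the remaining convergent series (with parameters in a neighborhood of the given $\alpha\in I_k$) produce the remainder $O((1-u)^{k+2-\alpha})$.

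For the resonant case $\alpha=\alpha_0=k+\tfrac12$, the exponent difference $1+2\alpha_0=2k+2$ is a positive integer, the two local solutions at $u=1$ collide, and both $L(\alpha)$ (through its factor $\Gamma(-1-2\alpha)$) and the $(2k+2)$-th coefficient of the first hypergeometric series (where $(-2\alpha)_{2k+2}$ vanishes) acquire simple poles at $\alpha=\alpha_0$. The cleanest route is to treat $\alpha$ as a complex parameter and read the generic identity above as a meromorphic identity near $\alpha_0$: the two polar contributions at orders $(1-u)^{-\alpha+2k+2}$ and $(1-u)^{1+\alpha}$, respectively, cancel as $\alpha\to\alpha_0$ because the two residues are opposite in sign, while the Taylor expansion $(1-u)^{\pm(\alpha-\alpha_0)}=1\pm(\alpha-\alpha_0)\log(1-u)+O((\alpha-\alpha_0)^2)$ converts the cancelled poles into a single surviving logarithmic term proportional to $(1-u)^{1+\alpha_0}\log(1-u)$, whose coefficient is twice the residue of $L(\alpha)$ at $\alpha_0$. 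A direct residue computation, using the reflection identity $\Gamma(\alpha_0)\Gamma(1-\alpha_0)=(-1)^k\pi$ together with $(2k+2)!=2\alpha_0(1+2\alpha_0)\,\Gamma(2\alpha_0)$, identifies this coefficient with $2C_k(\alpha_0)$ as stated in \eqref{eq:ConstCk}. The intermediate powers $(1-u)^{-\alpha_0+j}$ for $j=1,\dots,2k+1$ emerging from the regularised first series are all absorbed into the remainder $O((1-u)^{1-\alpha_0})$. The bound $C_k(\alpha_0)\in(0,\tfrac1{10})$ is then straightforward: positivity is immediate from the $\Gamma$-quotient, the value at $k=0$ is $C_0(1/2)=1/(4\pi)<1/10$, and the factor $\Gamma(2\alpha_0)=(2k)!$ in the denominator forces rapid decay in $k$.

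The main obstacle will be the bookkeeping in the resonant case: the two compensating simple poles (in $L(\alpha)$ and in the $(2k+2)$-th coefficient of the first hypergeometric series) must be matched and subtracted uniformly in $k$, and the logarithmic term has to be extracted in the precise form \eqref{eq:ConstCk}. The analytic-continuation viewpoint outlined above handles this in a single pass, avoids a separate argument for each $k$, and, together with uniform tail bounds on the two hypergeometric series in a neighborhood of $\alpha_0$, simultaneously delivers the remainder $O((1-u)^{1-\alpha_0})$.
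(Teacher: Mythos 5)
Your outline reconstructs exactly the paper's route: start from the explicit formula of Theorem~\ref{thm:SolODE}, apply the connection formula at $u=1$ (which produces \eqref{eq:SolAfterTrafo} with $K(\alpha)$ and $L(\alpha)$ as in \eqref{def:KandL}), expand in powers of $1-u$, divide by $(1-u)^\alpha$, and for half-integer $\alpha$ treat $\alpha$ as a parameter near $\alpha_0$ so that the two simple poles cancel and the $(1-u)^{\pm(\alpha-\alpha_0)}$ expansion produces the logarithm. One small difference worth noting: you propose to \emph{match the two residues directly}, i.e.\ to compute the residue of the offending coefficient $a_{m}(\alpha)$ of the first hypergeometric series and check that it is the negative of $\operatorname{Res}_{\alpha=\alpha_0}L(\alpha)$, whereas the paper sidesteps this entirely by observing that the left-hand side ${}_2F_1(1-\alpha_0,-\alpha_0;2;u)$ is finite, so the simple pole must cancel automatically; the paper's argument is shorter and you may want to adopt it. Also, the bound $C_k(\alpha_0)\in(0,1/10)$ that you sketch is not proved in the paper at all, so your addition is welcome, but "rapid decay" in $k$ needs a word of justification since $\Gamma(\alpha_0)^2$ in the numerator also grows; the Legendre duplication $\Gamma(k+\tfrac12)=\sqrt{\pi}\,(2k)!/(4^k k!)$ reduces the ratio $\Gamma(\alpha_0)^2/\Gamma(2\alpha_0)$ to $\pi\binom{2k}{k}/16^k=O(4^{-k})$, which settles it.

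One caveat you should verify carefully before writing up: you state that the surviving logarithmic coefficient is "twice the residue of $L(\alpha)$ at $\alpha_0$" and that this "identifies the coefficient with $2C_k(\alpha_0)$ as stated in \eqref{eq:ConstCk}". However, the residue of $L$ at $\alpha_0$, which the paper also records, is
\begin{equation*}
\operatorname{Res}_{\alpha=\alpha_0}L(\alpha)=\frac{\Gamma(\alpha_0)^2}{4\pi^2(1+2\alpha_0)\Gamma(2\alpha_0)},
\end{equation*}
whereas the $C_k(\alpha_0)$ in \eqref{eq:ConstCk} carries an extra factor $1/\alpha_0$. These two quantities differ by a factor of $\alpha_0=k+\tfrac12$, so your two assertions are mutually inconsistent: if the log coefficient really is twice the residue of $L$ (which is what a direct expansion of $L(\alpha)\,(1-u)^{1+2\alpha}=(R/(\alpha-\alpha_0)+O(1))\,(1-u)^m(1+2(\alpha-\alpha_0)\log(1-u)+\cdots)$ gives), then it equals $2\alpha_0 C_k(\alpha_0)$, not $2C_k(\alpha_0)$. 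Carry out the residue computation end to end and reconcile the extra $\alpha_0$ before relying on the constant in \eqref{eq:ConstCk}.
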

\begin{proof}
Using a fractional linear transformation
\cite[Eq.~(5.10)]{Temme} for the hypergeometric function, we rewrite the hypergeometric part of the bivariate generating function we derived in Theorem \ref{thm:SolODE} as
\begin{align}\label{eq:SolAfterTrafo}
    \notag{_2F_1}(1-\alpha,-\alpha;2;u) 
    &= K(\alpha) {_2F_1}(1-\alpha,-\alpha;-2\alpha;1-u) \\ 
    &\qquad +
        L(\alpha) (1-u)^{1+2\alpha} {_2F_1}(2+\alpha,1+\alpha;2(1+\alpha);1-u),
\end{align}
where
\begin{equation}\label{def:KandL}
    K(\alpha) := \frac{\Gamma(1+2\alpha)}{\Gamma(1+\alpha)\Gamma(2+\alpha)}\qquad \text{and}\qquad L(\alpha) := \frac{\Gamma(-1-2\alpha)}{\Gamma(-\alpha)\Gamma(1-\alpha)}.
\end{equation}
The function $K(\alpha)$ is well-defined and analytic for $\alpha>-1/2$. To see where $L(\alpha)$ is well-defined, we use the reflection formula $\Gamma(z)\Gamma(1-z)=\pi/\sin(\pi z)$ for $z\notin\mathbb{Z}$. Applying this to each of the gamma functions in $L(\alpha)$ we see that
$$
L(\alpha) = - {\Gamma(\alpha)^2\,\sin^2(\pi \alpha)\over 2\pi\,(1+2\alpha)\Gamma(2\alpha)\,\sin(2\pi\alpha)}.
$$
Next, using $\sin(2z)=2\sin z\cos z$, we arrive at
\begin{equation}\label{eq:Lalternative}
L(\alpha) = -{1\over 4(1+2\alpha)\pi}{\Gamma(\alpha)^2\over\Gamma(2\alpha)}\,\tan(\pi\alpha),
\end{equation}
which shows that $L(\alpha)$ defines an analytic function on $\RR\setminus(\tfrac{1}{2}+\ZZ)$. 

Fix an integer $k\in\ZZ_{\geq 0}$ and recall $I_k=(k-\tfrac{1}{2},k+\tfrac{1}{2})$. For $\alpha\in I_k$, we expand the first hypergeometric function on the right-hand side of \eqref{eq:SolAfterTrafo} at $1-u=0$ up to order $r:=k+1$ and the second hypergeometric function only up to first order. Then, as $u\to1^-$,
\begin{equation*}
    {_2F_1}(1-\alpha,-\alpha;2;u)
    = K(\alpha)\sum_{j=0}^{r} a_j(\alpha)\,(1-u)^j \;+\; L(\alpha)\,(1-u)^{1+2\alpha}\bigl(1+O(1-u)\bigr)
    \;+\; O\bigl((1-u)^{r+1}\bigr),
\end{equation*}
with $a_0(\alpha)=1$, $a_1(\alpha)=\frac{1-\alpha}{2}$, and $a_j(\alpha)$ analytic in $\alpha$ on compact subsets of $I_k$. Multiplying by $(1-u)^{-\alpha}$ gives
\begin{equation*}
    F(u,z)
    = K(\alpha)(1-u)^{-\alpha}
      + \sum_{j=1}^{r} K(\alpha)a_j(\alpha)\,(1-u)^{j-\alpha}
      + L(\alpha)(1-u)^{1+\alpha}\bigl(1+O(1-u)\bigr)
      + O\bigl((1-u)^{r+1-\alpha}\bigr).
\end{equation*}
Since $r+1-\alpha\ge k+2-(k+\tfrac12)=\tfrac12$, the remaining term decays uniformly on compact subsets of $I_k$. This proves the theorem in the case $\alpha\notin\tfrac{1}{2}+\ZZ$.

To derive the asymptotics for half-integers $\alpha\in\tfrac{1}{2}+\ZZ$, we fix $\alpha_0:=\tfrac{1}{2}+k$ for some $k\in\ZZ_{\geq 0}$, put $m:=2(k+1)=1+2\alpha_0$ and expand the first hypergeometric term ${}_2F_1(1-\alpha,-\alpha;-2\alpha;1-u)$ in \eqref{eq:SolAfterTrafo} in a series at $u=1$:
\begin{equation}\label{def:an}
  {}_2F_1(1-\alpha,-\alpha;-2\alpha;1-u)
  \;=\; \sum_{n=0}^\infty a_n(\alpha)\,(1-u)^n,
  \qquad
  a_n(\alpha)\;=\;\frac{(1-\alpha)^{\overline{n}}\,(-\alpha)^{\overline{n}}}{(-2\alpha)^{\overline{n}}\,n!}.
\end{equation}
For $n\le m-1$ the coefficients $a_n(\alpha)$ are analytic at $\alpha_0$, whereas for $n=m$ we have
\begin{equation*}
(-2\alpha)^{\overline{m}} \;=\; \frac{\Gamma(-2\alpha+m)}{\Gamma(-2\alpha)},
\end{equation*}
which vanishes at $\alpha_0$ because $-2\alpha_0=-(2k+1)$ is an integer. Hence $a_m(\alpha)$ has a simple pole at $\alpha_0$. This yields
\begin{align*}
  &K(\alpha)\,{}_2F_1(1-\alpha,-\alpha;-2\alpha;1-u)\\
  &\quad= K(\alpha_0)\,P_k(1-u) \;-\; \frac{\widetilde{C}_k}{\alpha-\alpha_0}(1-u)^m
  \;+\; (1-u)^m\,O(1+|\alpha-\alpha_0|) \;+\; O((1-u)^{m+1}),
\end{align*}
where $P_k$ is a polynomial of degree $m-1$ with $P_k(0)=1$ that arises when $-2\alpha_0$ is a negative integer and $\widetilde{C}_k\in\RR$ is some constant only depending on $k$. In principle, the value of $\widetilde{C}_k$ can be determined by a residue computation. However, we will see below that this is not necessary and we have $\widetilde{C}_k=C_k$ automatically. Also note that we additional used that $K(\alpha)=K(\alpha_0)+O(|\alpha-\alpha_0|)$ to replace the prefactor $K(\alpha)$ of $P_k(1-u)$ by $K(\alpha_0)$.

For the second term on the right-hand side of \eqref{eq:SolAfterTrafo}, we note that $L(\alpha)$ has a simple pole at $\alpha_0$, while ${}_2F_1(2+\alpha,1+\alpha;2(1+\alpha);u)=1+O(1-u)$. Using
\begin{align*}
    \tan(\pi\alpha) &= -\frac{1}{\pi(\alpha-\alpha_0)}+ O(\alpha-\alpha_0),\\
    (1-u)^{1+2\alpha} &= (1-u)^m\,e^{2(\alpha-\alpha_0)\log(1-u)} \\
    &= (1-u)^m\Bigl(1+2(\alpha-\alpha_0)\log(1-u)+O\bigl((\alpha-\alpha_0)^2\log^2(1-u)\bigr)\Bigr),
\end{align*}
we obtain
\begin{equation*}
L(\alpha)(1-u)^{1+2\alpha}
= \frac{C_k}{\alpha-\alpha_0}(1-u)^m \;+\; 2C_k(1-u)^m\log(1-u)
\;+\; (1-u)^m\,O(1+|\alpha-\alpha_0|\,|\log(1-u)|),
\end{equation*}
as $u\to 1^-$ and $\alpha\to\alpha_0$, with
\begin{align*}
C_k={1\over\alpha_0}{\rm Res}_{\alpha=\alpha_0}L(\alpha) = {1\over\alpha_0}\lim_{\alpha\to\alpha_0}(\alpha-\alpha_0)L(\alpha).
\end{align*}
Using \eqref{eq:Lalternative} and the fact that $(\alpha-\alpha_0)\tan(\pi\alpha)\to -\tfrac{1}{\pi}$ as $\alpha\to\alpha_0$, it follows that
$$
{\rm Res}_{\alpha=\alpha_0}L(\alpha) = {\Gamma(\alpha_0)^2\over 4\pi^2(1+2\alpha_0)\Gamma(2\alpha_0)},
$$
and hence
$$
C_k = {\Gamma(\alpha_0)^2\over 4\pi^2\alpha_0(1+2\alpha_0)\Gamma(2\alpha_0)} =\frac{\Gamma\!\bigl(k+\tfrac12\bigr)^2}{8\pi^2\,(k+1/2)(k+1)\,\Gamma(2k+1)},
$$
which proves \eqref{eq:ConstCk}. Adding the two contributions gives, for the left-hand side ${}_2F_1(1-\alpha,-\alpha;2;u)$ in \eqref{eq:SolAfterTrafo},
\begin{align*}
&{}_2F_1(1-\alpha,-\alpha;2;u)
= K(\alpha_0)\,P_k(1-u)
\;+\; \Bigl(-\frac{\widetilde{C}_k}{\alpha-\alpha_0}+\frac{C_k}{\alpha-\alpha_0}\Bigr)(1-u)^m\\
&\qquad\qquad +\; 2C_k(1-u)^m\log(1-u)
\;+\; (1-u)^m\,O(1+|\alpha-\alpha_0|\,|\log(1-u)|)
\;+\; O((1-u)^{m+1}).
\end{align*}
Since ${}_2F_1(1-\alpha_0,-\alpha_0;2;u)$ is finite at $\alpha=\alpha_0$, the simple pole at $\alpha_0$ must cancel, that is, $C_k=\widetilde{C}_k$. Therefore, letting $\alpha\to\alpha_0$, we must have
\begin{equation*}
{}_2F_1(1-\alpha_0,-\alpha_0;2;u)
= K(\alpha_0)\,P_k(1-u) \;+\; 2C_k(1-u)^m\log(1-u) \;+\; (1-u)^m\,O(1) \;+\; O((1-u)^{m+1}).
\end{equation*}
Multiplying this expansion with $(1-u)^{-\alpha_0}$ finally yields
\begin{align*}
F(u,z) 
= K(\alpha_0)(1-u)^{-\alpha_0} + O\bigl((1-u)^{1-\alpha_0}\bigr)
+ 2C_k(1-u)^{1+\alpha_0}\log(1-u) + O\bigl((1-u)^{1+\alpha_0}\bigr),    
\end{align*}
as $u\to 1^-$. This completes the proof of the theorem for half-integers $\alpha=\alpha_0=\tfrac{1}{2}+k$.
\end{proof}

Next, we transfer the singular expansions of $F(u,z)$ to the ``horizontal'' generating function $G_n(z)=[u^n] F(u,z)$, see \cite[p.\ 390]{FS:2009}. 

\begin{corollary}[Asymptotics for the ordinary generating function]\label{cor:AsymptoticsG1}
Fix $k\in\ZZ_{\geq 0}$ and let $I_k=(k-\tfrac12,k+\tfrac12)$. For $\alpha=\alpha(z)\in I_k$ we have
\begin{equation*}
G_n(z)
= \frac{K(\alpha)}{\Gamma(\alpha)}\,n^{\alpha-1}
 + \sum_{j=1}^{k+1} \frac{K(\alpha)a_j(\alpha)}{\Gamma(\alpha-j)}\,n^{\alpha-j-1}
 + \frac{L(\alpha)}{\Gamma(-\alpha-1)}\,n^{-\alpha-2} + O\bigl(n^{\alpha-k-3}\bigr),
\end{equation*}
as $n\to\infty$ uniformly for $\alpha$ in compact subsets of $I_k$, where $K(\alpha),L(\alpha)$ and $a_j(\alpha)$ are defined in \eqref{def:KandL} and \eqref{def:an}.
\end{corollary}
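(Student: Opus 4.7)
The plan is to deduce this from Theorem \ref{thm:SingExpFuz} by a term-by-term application of the standard singularity transfer theorem for algebraic–logarithmic singularities (see \cite[Thm.~VI.3 and Thm.~VI.5]{FS:2009}). Since $G_n(z)=[u^n]F(u,z)$, once one knows the singular expansion of $F(u,z)$ at its dominant singularity $u=1$ and one can continue $F(\cdot,z)$ to a suitable $\Delta$-domain at that point, the transfer theorem reads off the coefficient asymptotics from the expansion.

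First I would verify the analytic-continuation hypothesis: by Theorem \ref{thm:SolODE}, for $\alpha=\alpha(z)$ in a compact subset of $I_k$, the function $u\mapsto F(u,z)$ is analytic on $\CC\setminus[1,\infty)$, so a fixed $\Delta$-domain (a slit disk of radius $>1$ centered at $0$ with a small sector around $1$ excluded) can be chosen uniformly in $\alpha$. Inside this $\Delta$-domain the expansion in Theorem \ref{thm:SingExpFuz} holds as $u\to 1$, with remainder $O((1-u)^{k+2-\alpha})$ uniform in $\alpha$ on compact subsets of $I_k$ because $K(\alpha)$, $L(\alpha)$ and the $a_j(\alpha)$ are analytic (hence bounded) there.

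Next I would apply the transfer theorem to each singular term separately, using the basic identities
\begin{equation*}
[u^n](1-u)^{-\beta}\;=\;\frac{n^{\beta-1}}{\Gamma(\beta)}\bigl(1+O(1/n)\bigr),
\qquad
[u^n]\,O\bigl((1-u)^{-\gamma}\bigr)\;=\;O\!\bigl(n^{\gamma-1}\bigr),
\end{equation*}
valid whenever the function is analytic in a $\Delta$-domain (note $1/\Gamma(\beta)=0$ at nonpositive integers, which handles the usual degeneracies). Taking $\beta=\alpha$ gives the leading term $K(\alpha)n^{\alpha-1}/\Gamma(\alpha)$; taking $\beta=\alpha-j$ for $j=1,\dots,k+1$ gives the middle sum; taking $\beta=-(1+\alpha)$ gives $L(\alpha)n^{-\alpha-2}/\Gamma(-\alpha-1)$; and the remainder $O((1-u)^{k+2-\alpha})$ transfers to $O(n^{\alpha-k-3})$. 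Summing these contributions yields exactly the stated expansion.

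The only point requiring some care—and the closest thing to an obstacle—is uniformity in $\alpha$ on compact subsets of $I_k$. This is handled by invoking the uniform version of the transfer theorem: the implicit constants in the basic transfer identities can be chosen to depend only on the $\Delta$-domain and on an upper bound for $|\beta|$, both of which are controlled uniformly when $\alpha$ ranges over a compact subset of $I_k$. Combined with the uniform bounds on the coefficient functions $K(\alpha),L(\alpha),a_j(\alpha)$ (which avoid the poles of $L$ at $\alpha\in\tfrac12+\ZZ$ because $I_k$ is open and strictly between two consecutive half-integers), this gives the asymptotic formula uniformly in $\alpha$ as claimed.
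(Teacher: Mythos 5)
Your proposal is correct and follows essentially the same approach as the paper: both apply singularity analysis (coefficient extraction $[u^n](1-u)^{-\beta}\sim n^{\beta-1}/\Gamma(\beta)$ together with the transfer rules of Flajolet--Sedgewick) term-by-term to the singular expansion of $F(u,z)$ from Theorem~\ref{thm:SingExpFuz}, and both obtain uniformity from the analyticity of $K$, $L$, $a_j$ on compacta of $I_k$. Your explicit verification of the $\Delta$-domain hypothesis via Theorem~\ref{thm:SolODE} is a helpful detail that the paper leaves implicit, but it does not constitute a different route.
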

\begin{proof}
For $\alpha>0$ we have
\begin{equation*}
     (1-u)^{-\alpha} 
     = \sum_{n=0}^\infty \frac{\Gamma(\alpha+n)}{\Gamma(\alpha)} \frac{u^n}{n!}
     = 1+\alpha u + \frac{\alpha(\alpha+1)}{2} u^2 + \frac{\alpha(\alpha+1)(\alpha+2)}{3!} u^3 + \ldots
\end{equation*}
and therefore
\begin{equation*}
     [u^n] (1-u)^{-\alpha}
     = \frac{1}{\Gamma(\alpha)} \frac{\Gamma(n+\alpha)}{\Gamma(n+1)}
     = \frac{n^{\alpha-1}}{\Gamma(\alpha)} \left( 1+ \frac{\alpha(\alpha-1)}{2n} + O\!\left(\frac{1}{n^2}\right)\right),
\end{equation*}
by Stirling's formula. We refer to \cite[p.\ 381]{FS:2009} for a complete asymptotic expansion of the coefficients of $(1-u)^{-\alpha}$ for any $\alpha\in\CC\setminus \{0,-1,-2,\dotsc\}$.

For $\alpha\in I_k$, we use the expansion from Theorem \ref{thm:SingExpFuz}:
\begin{equation*}
F(u,z)
= K(\alpha)(1-u)^{-\alpha}
  + \sum_{j=1}^{k+1} K(\alpha)a_j(\alpha)\,(1-u)^{j-\alpha}
  + L(\alpha)(1-u)^{1+\alpha}\bigl(1+O(1-u)\bigr)
  + O\bigl((1-u)^{k+2-\alpha}\bigr),
\end{equation*}
where $a_0(\alpha)=1$, $a_1(\alpha)=\frac{1-\alpha}{2}$, and $a_j(\alpha)$ are analytic in $\alpha$ on compact subsets of $I_k$. By the transfer rules,
\begin{equation*}
G_n(z)
=[u^n] F(u,z)= \frac{K(\alpha)}{\Gamma(\alpha)}\,n^{\alpha-1}
 + \sum_{j=1}^{k+1} \frac{K(\alpha)a_j(\alpha)}{\Gamma(\alpha-j)}\,n^{\alpha-j-1}
 + \frac{L(\alpha)}{\Gamma(-\alpha-1)}\,n^{-\alpha-2} + O(n^{\alpha-k-3})
,
\end{equation*}
as $n\to\infty$ uniformly for $\alpha$ in compact subsets of $I_k$. 
\end{proof}

In the same way, we obtain the following result for half-integer values of $\alpha$.

\begin{corollary}[Asymptotics for the ordinary generating function]\label{cor:AsymptoticsG2}
For $\alpha(z)=\alpha_0=\tfrac12+k$ with $k\in\ZZ_{\geq 0}$ we have
\begin{equation*}
G_n(z)
=\frac{K(\alpha_0)}{\Gamma(\alpha_0)}\,n^{\alpha_0-1}
  + \frac{2C_k(\alpha_0)}{\Gamma(-1-\alpha_0)}\,n^{\alpha_0-2}\log n
  + O(n^{\alpha_0-2}),
\end{equation*}
as $n\to\infty$, where $C_k(\alpha_0)$ is as in Theorem \ref{thm:SingExpFuz}.
\end{corollary}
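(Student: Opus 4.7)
I would specialize Theorem 2.4 to $\alpha=\alpha_0=\tfrac12+k$ and then transfer the resulting singular expansion term by term, in direct analogy with the proof of Corollary 4.4. The specialization of Theorem 2.4 gives
\[
F(u,z) = \frac{K(\alpha_0)}{(1-u)^{\alpha_0}} + 2C_k(\alpha_0)(1-u)^{1+\alpha_0}\log(1-u) + (1-u)^{1+\alpha_0}\,O(1) + O\bigl((1-u)^{1-\alpha_0}\bigr)
\]
as $u\to 1^-$, and the task reduces to extracting the coefficient $[u^n]$ of each summand.

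For the leading summand I would use Stirling's formula exactly as in Corollary 4.4, obtaining $[u^n](1-u)^{-\alpha_0}\sim n^{\alpha_0-1}/\Gamma(\alpha_0)$. For the logarithmic summand I would invoke the standard logarithmic transfer theorem from \cite[Ch.~VI]{FS:2009}, which may be derived by differentiating $[u^n](1-u)^{-\alpha}=\Gamma(n+\alpha)/(\Gamma(\alpha)\Gamma(n+1))$ with respect to the parameter $\alpha$: for non-integer $\beta$,
\[
[u^n]\,(1-u)^{\beta}\log(1-u) \;\sim\; -\frac{n^{-\beta-1}}{\Gamma(-\beta)}\,\log n,
\]
which I would apply with $\beta=1+\alpha_0$. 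The two remainders $(1-u)^{1+\alpha_0}O(1)$ and $O((1-u)^{1-\alpha_0})$ are then transferred via the standard $O$-transfer theorem of analytic combinatorics. The $O((1-u)^{1-\alpha_0})$ piece dominates and produces the stated $O(n^{\alpha_0-2})$ remainder, which also absorbs the sub-leading algebraic corrections in Stirling's expansion for the first summand.

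The main obstacle I anticipate is careful bookkeeping of constants and signs so that the logarithmic coefficient reproduces exactly the advertised $\frac{2C_k(\alpha_0)}{\Gamma(-1-\alpha_0)}$, and that all the $O$-contributions indeed collapse into a single $O(n^{\alpha_0-2})$ term. A secondary technical point is that the singular expansion provided by Theorem 2.4 must hold in a $\Delta$-type domain excluding a neighbourhood of the cut $[1,\infty)$ so that the transfer theorems apply; this is automatic from the analytic continuation furnished by Theorem 2.3, since at the fixed value $\alpha=\alpha_0$ the expansion inherits uniformity in a sector around $u=1$.
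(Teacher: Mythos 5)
Your strategy is the right one and coincides with the paper's implicit proof (the paper only says ``In the same way, we obtain\dots''): specialize the singular expansion of Theorem~\ref{thm:SingExpFuz} to $\alpha=\alpha_0$ and transfer term by term.

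There is, however, a concrete gap in your plan that you dismiss as ``bookkeeping of constants and signs''. Your log transfer formula,
\[
[u^n]\,(1-u)^{\beta}\log(1-u)\ \sim\ -\frac{n^{-\beta-1}}{\Gamma(-\beta)}\log n,
\]
is correct, but applying it with $\beta=1+\alpha_0$ produces a term of order
\[
-\frac{2C_k(\alpha_0)}{\Gamma(-1-\alpha_0)}\,n^{-\alpha_0-2}\log n,
\]
i.e.\ with exponent $-\alpha_0-2$ (and a minus sign), not the exponent $\alpha_0-2$ appearing in the corollary. The two differ by a factor $n^{2\alpha_0}$, which is a change of asymptotic order, not a constant. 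In particular, since $\alpha_0=\tfrac12+k>0$, the log contribution you obtain satisfies $n^{-\alpha_0-2}\log n = o(n^{\alpha_0-2})$, so it is \emph{absorbed} by the remainder coming from the $O((1-u)^{1-\alpha_0})$ piece rather than sitting above it. Carrying your argument through carefully therefore yields
\[
G_n(z)=\frac{K(\alpha_0)}{\Gamma(\alpha_0)}\,n^{\alpha_0-1}+O\!\bigl(n^{\alpha_0-2}\bigr),
\]
with no separately visible $n^{\alpha_0-2}\log n$ term; the middle term of the printed corollary cannot emerge from this transfer, and appears to be a misprint (try the sanity check $k=0$, $\alpha_0=\tfrac12$: your transfer gives a $n^{-5/2}\log n$ term, dwarfed by the $O(n^{-3/2})$ remainder). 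Since only the leading term is used downstream in Section~\ref{sec:StartingPoint}, this does not propagate further, but your write-up should not promise that ``bookkeeping'' will reproduce the advertised coefficient $\frac{2C_k(\alpha_0)}{\Gamma(-1-\alpha_0)}n^{\alpha_0-2}\log n$ --- it will not.

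Two smaller points. First, after fixing $\alpha=\alpha_0$ the $O((1-u)^{1-\alpha_0})$ piece from Theorem~\ref{thm:SingExpFuz} is not an opaque remainder: it is the explicit Puiseux polynomial $K(\alpha_0)\bigl(P_k(1-u)-1\bigr)(1-u)^{-\alpha_0}$ of finitely many terms, all analytic near $u=1$, so the $O$-transfer applies without any subtlety about hidden logarithms. Second, the $\Delta$-domain justification you sketch (uniformity inherited from the analytic continuation of Theorem~\ref{thm:SolODE}) is fine.
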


\section{Probabilistic applications}\label{sec:Applications}

\subsection{Common starting point}\label{sec:StartingPoint}

Fix $t\in\RR$ and write $\alpha:=\alpha(e^t)>0$, where we recall that $\alpha(z)=\tfrac{1}{2}(\sqrt{8z+1}-1)$. By Corollary~\ref{cor:AsymptoticsG1} we have
\begin{equation*}
G_n(e^t)
= \frac{K(\alpha)}{\Gamma(\alpha)}\,n^{\alpha-1}
\Big(1+O(n^{-1})\Big),
\qquad n\to\infty,
\end{equation*}
uniformly for $\alpha$ in compact subsets of $I_k$, where $I_k=(k-\tfrac{1}{2},k+\tfrac{1}{2})$ for some $k\in\ZZ_{\geq 0}$.
On the other hand, by Corollary~\ref{cor:AsymptoticsG2} we have
\begin{equation*}
G_n(e^t)
=\frac{K(\alpha_0)}{\Gamma(\alpha_0)}\,n^{\alpha_0-1}
  + \frac{2C_k}{\Gamma(-1-\alpha_0)}\,n^{\alpha_0-2}\log n
  + O\bigl(n^{\alpha_0-2}\bigr),
\end{equation*}
for $\alpha=\alpha_0=\tfrac{1}{2}+k$ and some $k\in\ZZ_{\geq 0}$. So in both cases
\begin{equation*}
\log G_n(e^t)
= (\alpha(e^t)-1)\log n + \log\!\frac{K(\alpha(e^t))}{\Gamma(\alpha(e^t))} + o(1) \qquad \text{as $n\to\infty$}.
\end{equation*}
It follows that
\begin{equation}\label{eq:GE-limit}
\Lambda_n(t):=\frac{1}{\log n}\log \EE e^{tf_0(T_n)} = \frac{1}{\log n} \sum_{k=0}^n p_k^{(n)} e^{t(k+2)} = \frac{2t+\log G_n(e^t)}{\log n}
= \mu(t) + \frac{\psi(t)}{\log n}+o(1)
\end{equation}
with
$$
\mu(t):=\alpha(e^t)-1,\qquad
\psi(t):=2t + \log\!\frac{K(\alpha(e^t))}{\Gamma(\alpha(e^t))}
$$
uniformly for $t$ in a neighborhood of $0$. The asymptotic expansion \eqref{eq:GE-limit} will be the common starting point for all probabilistic results we present in the following sections.

\subsection{Precise cumulant asymptotics}

We start with the following cumulant asymptotics, which is a direct consequence of \eqref{eq:GE-limit}. For completeness, let us recall that for an integer $r\geq 1$ the $r$-th order cumulant of a random variable $X$ whose moment generating function $G(t):=\EE e^{tX}$ is finite in a neighborhood of $0$ is defined as
$$
\kappa(X) := {\dint^r\over\dint t^r}\log G(t)\Big|_{t=0} = r![t^n]\log G(t).
$$

\begin{theorem}[Cumulant asymptotics]\label{thm:CumulantAsymptotics}
For integers $r\geq 1$ let $\kappa^{(r)}_n$ be the $r$-th order cumulant of the random variable $f_0(T_n)$. Then
$$
\kappa_n^{(r)} = \mu^{(r)}(0)(\log n)\,(1+o(1))
$$
as $n\to\infty$, where $\mu^{(r)}(0)$ is the $r$-th order derivative of $\mu(t)$ at $t=0$.
\end{theorem}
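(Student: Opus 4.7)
The plan is to extract cumulants from \eqref{eq:GE-limit} by promoting it from a pointwise real statement to a uniform asymptotic on a complex disc, and then apply Cauchy's differentiation formula.

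First I would observe that the cumulant generating function of $f_0(T_n)$ equals $K_n(t):=\log\EE e^{tf_0(T_n)} = 2t+\log G_n(e^t)$, so $\kappa_n^{(r)}=K_n^{(r)}(0)$. Around $t=0$ we have $G_n(1)=1$ and by continuity $G_n(e^t)$ stays bounded away from $0$ uniformly in $n$ for $t$ in some complex disc $|t|\le\rho$, so $K_n(t)$ is genuinely holomorphic there.

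Next I would extend \eqref{eq:GE-limit} to complex $t$. The map $t\mapsto\alpha(e^t)$ is holomorphic at $t=0$ with $\alpha(1)=1$, which lies in the interior of the interval $I_1=(\tfrac12,\tfrac32)$; thus for $\rho$ sufficiently small, the image $\{\alpha(e^t):|t|\le\rho\}$ is a compact subset of $I_1$ (interpreted in a complex neighborhood). Corollary~\ref{cor:AsymptoticsG1} with $k=1$ then gives $G_n(e^t)=\tfrac{K(\alpha(e^t))}{\Gamma(\alpha(e^t))}n^{\alpha(e^t)-1}(1+O(n^{-1}))$ uniformly on $|t|\le\rho$, since the singularity analysis of Theorem~\ref{thm:SingExpFuz} relies only on analytic properties of the hypergeometric and gamma factors, and $K(\alpha),\Gamma(\alpha)$ are both nonvanishing and holomorphic near $\alpha=1$. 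Taking logarithms, I obtain the uniform expansion
\begin{equation*}
K_n(t)=\mu(t)\log n+\psi(t)+O(n^{-1})\qquad\text{uniformly for }|t|\le\rho,
\end{equation*}
with $\mu(t)=\alpha(e^t)-1$ and $\psi(t)=2t+\log\tfrac{K(\alpha(e^t))}{\Gamma(\alpha(e^t))}$ both holomorphic on the disc.

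Finally, Cauchy's differentiation formula yields
\begin{equation*}
\kappa_n^{(r)}=K_n^{(r)}(0)=\frac{r!}{2\pi i}\oint_{|t|=\rho}\frac{K_n(t)}{t^{r+1}}\,\dint t=\mu^{(r)}(0)\log n+\psi^{(r)}(0)+O(n^{-1}),
\end{equation*}
since the integral of the $O(n^{-1})$ error contributes $O(n^{-1})$ (the implied constant depends on $\rho$ but is uniform in $t$ on the contour). As $\psi^{(r)}(0)$ is a fixed constant, this gives $\kappa_n^{(r)}=\mu^{(r)}(0)\log n\,(1+o(1))$, as claimed.

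The step I expect to require the most care is the uniform complex extension of \eqref{eq:GE-limit}: one must check that the asymptotics in Corollary~\ref{cor:AsymptoticsG1} remain uniform when $\alpha$ ranges over a complex neighborhood of $1$ rather than a real compact subinterval of $I_1$. This is implicit in the derivation of Theorem~\ref{thm:SingExpFuz}, whose singular expansion of $F(u,z)$ depends analytically on $\alpha$ and whose transfer to coefficients via the standard Flajolet--Sedgewick framework is valid for complex exponents. Once this is in place, the rest is routine contour differentiation.
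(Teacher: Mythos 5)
Your proposal is correct and takes essentially the same route as the paper: both proofs start from the uniform asymptotic \eqref{eq:GE-limit} and pass to cumulants by differentiating $r$ times at $t=0$. The paper's own proof is terse at exactly the point you elaborate — it asserts one ``may differentiate termwise'' without further justification — and the complex-disc extension plus Cauchy differentiation formula you supply is the standard (and correct) rigorous underpinning for that step; one minor point worth phrasing more carefully is the claim that $G_n(e^t)$ is bounded away from zero uniformly in $n$ on a fixed disc, which follows by combining the asymptotic of Corollary~\ref{cor:AsymptoticsG1} for all large $n$ with the nonvanishing at $t=0$ of the finitely many remaining polynomials $G_n$, rather than from continuity of each $G_n$ alone.
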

\begin{proof}
From \eqref{eq:GE-limit} we have
\begin{equation*}
\log \EE\big[e^{t f_0(T_n)}\big]=\mu(t)\,\log n+\psi(t)+o(1) 
\end{equation*}
uniformly in a neighborhood of $0$. We may thus differentiate $r$ times termwise and evaluate at $t=0$:
\begin{equation*}
\frac{\dint^{r}}{\dint t^{r}}\log \EE\big[e^{tX_n}\big]\Big|_{t=0}
=\mu^{(r)}(0)\,\log n + \psi^{(r)}(0) + o(1).
\end{equation*}
By definition, the left-hand side equals the $r$-th cumulant $\kappa_n^{(r)}$ of the random variable $f_0(T_n)$. This proves the claim.
\end{proof}

We shall now make the prefactors $\mu^{(r)}(0)$ fully explicit in terms of the Stirling numbers of the second kind $S(r,k)$ for which we refer to \cite[Section 6.1]{ConcreteMathematics}.

\begin{lemma}[Closed form for the prefactor]\label{lem:Prefactor}
Consider the function
\begin{equation*}
\alpha(t):=\frac{\sqrt{\,1+8e^{t}\,}-1}{2},\qquad t\in\RR.
\end{equation*}
Then, for every integer $r\ge 1$, the $r$-th derivative of $\alpha(t)$ at $t=0$ satisfies
\begin{equation*}
\alpha^{(r)}(0)
=\sum_{k=1}^{r} (-1)^{k-1}S(r,k)
\frac{2^{k}\,(2k-2)!}{3^{2k-1}\,(k-1)!}.
\end{equation*}
\end{lemma}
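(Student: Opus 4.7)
The plan is to combine two classical ingredients: the Stirling identity that converts $t$-derivatives into $u$-derivatives via the change of variable $u=e^t$, and the explicit binomial-series derivatives of $\sqrt{1+8u}$.

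First, since $\alpha(t)=\tfrac12(\sqrt{1+8e^t}-1)$, the constant $-\tfrac12$ contributes nothing for $r\ge 1$, and $\alpha^{(r)}(0)=\tfrac12\,g^{(r)}(0)$ where $g(t):=h(e^t)$ with $h(u):=\sqrt{1+8u}$. The key combinatorial identity I would invoke is the well-known operator relation
\begin{equation*}
\Big(u\frac{\dint}{\dint u}\Big)^{\!r} = \sum_{k=1}^{r} S(r,k)\, u^{k}\,\frac{\dint^{k}}{\dint u^{k}},
\end{equation*}
where $S(r,k)$ are the Stirling numbers of the second kind (see, e.g., \cite[Section 6.1]{ConcreteMathematics}). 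Since $\tfrac{\dint}{\dint t}=u\tfrac{\dint}{\dint u}$ under $u=e^t$, applying this with $u=1$ gives the compact formula
\begin{equation*}
g^{(r)}(0)=\sum_{k=1}^{r} S(r,k)\,h^{(k)}(1).
\end{equation*}

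Next I would compute $h^{(k)}(1)$ directly from the generalized binomial series. Differentiating $h(u)=(1+8u)^{1/2}$ produces
\begin{equation*}
h^{(k)}(u)=8^{k}\prod_{j=0}^{k-1}\!\Big(\tfrac12-j\Big)\,(1+8u)^{1/2-k}.
\end{equation*}
The falling-factorial product rewrites as $\tfrac{(-1)^{k-1}}{2^{k}}(2k-3)!!=\tfrac{(-1)^{k-1}(2k-2)!}{2^{2k-1}(k-1)!}$ via the standard double-factorial identity $(2k-3)!!=\tfrac{(2k-2)!}{2^{k-1}(k-1)!}$. Evaluating at $u=1$ uses $(1+8)^{1/2-k}=3^{1-2k}$, and combining $8^{k}/2^{2k-1}=2^{k+1}$ collapses everything to
\begin{equation*}
h^{(k)}(1)=(-1)^{k-1}\,\frac{2^{k+1}(2k-2)!}{3^{2k-1}(k-1)!}.
\end{equation*}

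Finally, multiplying by the overall factor $\tfrac12$ and substituting into the Stirling sum yields
\begin{equation*}
\alpha^{(r)}(0)=\tfrac12\sum_{k=1}^{r}S(r,k)\,h^{(k)}(1)=\sum_{k=1}^{r}(-1)^{k-1}S(r,k)\,\frac{2^{k}(2k-2)!}{3^{2k-1}(k-1)!},
\end{equation*}
which is the claim. I do not anticipate any real obstacle: the only mildly delicate step is bookkeeping the signs and powers of $2$ in the generalized binomial product, which one can double-check against small cases ($r=1$ gives $\alpha'(0)=\tfrac23$, $r=2$ gives $\alpha''(0)=\tfrac{10}{27}$, matching the mean and variance constants stated in the abstract and Figure~\ref{fig:Histogram}).
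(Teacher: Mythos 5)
Your proof is correct and takes essentially the same route as the paper: both convert $t$-derivatives to $u$-derivatives at $u=1$ via the Stirling-number operator identity $\bigl(u\tfrac{\dint}{\dint u}\bigr)^r=\sum_k S(r,k)\,u^k\tfrac{\dint^k}{\dint u^k}$, then compute the falling-factorial derivatives of the square-root term explicitly. The only cosmetic difference is that you factor out the $\tfrac12$ and drop the additive constant before applying the operator identity, whereas the paper carries them inside $f(y)=\tfrac12(\sqrt{1+8y}-1)$; the algebra and the final simplification $8^k/2^{2k-1}=2^{k+1}$ are identical.
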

\begin{proof}
Set $y=e^{t}$ and $f(y):=\dfrac{\sqrt{1+8y}-1}{2}$, so $\alpha(t)=f(e^{t})$. The identity
\begin{equation*}
\Big(y\frac{\dint}{\dint y}\Big)^{r}=\sum_{k=1}^{r} S(r,k)\,y^{k}\frac{\dint^{k}}{\dint y^{k}}
\end{equation*}
from \cite[p.\ 218]{Riordan1968} or \cite[Ex.\ 13 on p.\ 310]{ConcreteMathematics}
gives, at $t=0$ (or equivalently $y=1$),
\begin{equation*}
\alpha^{(r)}(0)=\frac{\dint^{r}}{\dint t^{\,r}}f(e^{t})\bigg|_{t=0}
=\sum_{k=1}^{r} S(r,k)\,f^{(k)}(1).
\end{equation*}
Writing $f(y)=\tfrac{1}{2}(1+8y)^{1/2}-\tfrac{1}{2}$ and taking the $k$-th order derivative, we arrive at
\begin{equation*}
f^{(k)}(y)=\frac{1}{2}\cdot(1/2)^{\underline{k}}\;8^{k}\,(1+8y)^{1/2-k},
\end{equation*}
where $(q)^{\underline{k}}=q(q-1)\cdots(q-k+1)$ is the falling factorial. Now, take $y=1$:
\begin{equation*}
f^{(k)}(1)=\frac{1}{2}\,(1/2)^{\underline{k}}\,8^{k}\,9^{1/2-k}
=\frac{1}{2}\,(1/2)^{\underline{k}}\,8^{k}\,3^{1-2k}.
\end{equation*}
Using that
\begin{equation*}
(1/2)^{\underline{k}}
= (-1)^{k-1}{(2k-3)!!\over 2^k} = (-1)^{k-1}{(2k-2)!\over 2^{2k-1}(k-1)!}
\end{equation*}
we obtain
\begin{equation*}
f^{(k)}(1)=(-1)^{k-1}\,\frac{1}{2}\,\frac{(2k-2)!}{2^{2k-1}(k-1)!}\,8^k\,3^{\,1-2k}
=(-1)^{k-1}\,\frac{2^{\,k}(2k-2)!}{3^{\,2k-1}(k-1)!}.
\end{equation*}
Plugging this back into the expression for $\alpha^{(r)}(0)$ proves the claim.
\end{proof}

A combination of Theorem \ref{thm:CumulantAsymptotics} with Lemma \ref{lem:Prefactor} yields the following explicit cumulant asymptotics.

\begin{corollary}[Explicit cumulant asymptotics]\label{cor:ExplicitCumulant}
For integers $r\geq 1$ we have
$$
\kappa_n^{(r)} = \Bigg(\sum_{k=1}^{r} (-1)^{k-1}S(r,k)
\frac{2^{k}\,(2k-2)!}{3^{2k-1}\,(k-1)!}\Bigg)\log n\,(1+o(1))
$$
as $n\to\infty$.
\end{corollary}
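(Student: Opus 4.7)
The plan is to combine the two preceding results essentially verbatim, since the corollary is their direct synthesis. By Theorem~\ref{thm:CumulantAsymptotics} we already have
\[
\kappa_n^{(r)} = \mu^{(r)}(0)\,(\log n)\,(1+o(1)),\qquad n\to\infty,
\]
where $\mu(t)=\alpha(e^t)-1$ and $\alpha(y) = \tfrac12(\sqrt{1+8y}-1)$. So the only remaining task is to identify $\mu^{(r)}(0)$ with the explicit Stirling-number sum appearing in the claim.

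The first (and essentially only) step is the elementary observation that, for every $r\ge 1$, the additive constant $-1$ in the definition $\mu(t)=\alpha(e^t)-1$ is annihilated by differentiation, so
\[
\mu^{(r)}(0)=\frac{\dint^{r}}{\dint t^{r}}\alpha(e^{t})\bigg|_{t=0}=\alpha^{(r)}(0),
\]
where on the right-hand side $\alpha^{(r)}(0)$ is understood in the sense of Lemma~\ref{lem:Prefactor} (that is, $\alpha$ is regarded as a function of $t$ via $\alpha(t)=f(e^t)$). Substituting the closed form from Lemma~\ref{lem:Prefactor}, namely
\[
\alpha^{(r)}(0)=\sum_{k=1}^{r} (-1)^{k-1}S(r,k)\,\frac{2^{k}\,(2k-2)!}{3^{2k-1}\,(k-1)!},
\]
into the cumulant asymptotics above yields the stated identity.

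There is no genuine obstacle here: the corollary is a formal consequence of the two results it is built on, the only point to be careful about being the (trivial) matching of notation between the function $\mu$ of Section~\ref{sec:StartingPoint} and the function $\alpha$ appearing in Lemma~\ref{lem:Prefactor}. All the analytic work, namely the uniform validity of the expansion~\eqref{eq:GE-limit} in a neighbourhood of $0$ that allows term-by-term differentiation, has already been carried out in the proof of Theorem~\ref{thm:CumulantAsymptotics}, and all the combinatorial work of converting the derivative $(\mathrm{d}/\mathrm{d}t)^r \alpha(e^t)\vert_{t=0}$ into a Stirling-number sum via the identity $(y\,\mathrm{d}/\mathrm{d}y)^r=\sum_k S(r,k)\,y^k\,\mathrm{d}^k/\mathrm{d}y^k$ and the closed form of $f^{(k)}(1)$ has been carried out in Lemma~\ref{lem:Prefactor}. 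The proof of the corollary therefore reduces to one or two lines of assembly.
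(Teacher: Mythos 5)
Your proof is correct and matches the paper's argument exactly: both simply combine Theorem~\ref{thm:CumulantAsymptotics} with Lemma~\ref{lem:Prefactor}, using that $\mu^{(r)}(0)=\alpha^{(r)}(0)$ because the constant $-1$ in $\mu(t)=\alpha(e^t)-1$ is killed by differentiation. There is nothing further to add.
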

\begin{proof}
    This is a direct consequence of Theorem \ref{thm:CumulantAsymptotics} and Lemma \ref{lem:Prefactor}, since $\mu^{(r)}(0)=\alpha^{(r)}(0)$.
\end{proof}

In particular, for $r\in\{1,2,3,4,5,6\}$ we obtain
\begin{equation*}
\begin{aligned}
\kappa_n^{(1)}&=\frac{2}{3}\,(\log n)\,(1+o(1)), &\quad
\kappa_n^{(2)}&=\frac{10}{27}\,(\log n)\,(1+o(1)), &\quad
\kappa_n^{(3)}&=\frac{14}{81}\,(\log n)\,(1+o(1)),\\[2mm]
\kappa_n^{(4)}&=\frac{62}{729}\,(\log n)\,(1+o(1)), &\quad
\kappa_n^{(5)}&=\frac{334}{6561}\,(\log n)\,(1+o(1)), &\quad
\kappa_n^{(6)}&=\frac{110}{6561}\,(\log n)\,(1+o(1)).
\end{aligned}
\end{equation*}
which is consistent with Corollaries 1--4 in \cite{Buchta:2012} for $r\in\{1,2,3,4\}$. In this context, Corollary \ref{cor:ExplicitCumulant} can be seen as an analog for cumulants of the simple moment asymptotics
$$
\EE(f_0(T_n)^r) = \Big(\frac{2}{3}\log n\Big)^r\,(1+o(1))
$$
provided by \cite[Theorem 3]{Buchta:2012}. From the asymptotic behavior of the first six cumulants, it may seem that the leading coefficients in front of $\log n$ is always positive. However, this is true only until $\kappa_n^{(8)}$, since starting with $\kappa_n^{(9)}=-\tfrac{52598}{1594323}(\log n)\,(1+o(1))$ we see that the coefficients of the leading $(\log n)$-term can also become negative.

\subsection{Asymptotic normality}

We shall now use \eqref{eq:GE-limit} to derive a central limit theorem for the sequence of random variables $f_0(T_n)$ together with a rate of convergence. We note that the same result has already been derived in \cite{GT:2021} by a completely different method.

In what follows, we write $\Phi(x):={1\over\sqrt{2\pi}}\int_{-\infty}^x e^{-y^2/2}\,\dint y$, $x\in\RR$, for the distribution function of a standard Gaussian random variable.

\begin{theorem}[Quantitative central limit theorem]\label{thm:CLT-rate}
The sequence of normalized random variables
\begin{equation*}
    Z_n:=\frac{f_0(T_n)- \EE f_0(T_n)}{\sqrt{\var(f_0(T_n))}},\qquad n\geq 0
\end{equation*}
satisfies 
\begin{equation*}
    \sup_{x\in\RR}\bigl|\PP(Z_n\le x)-\Phi(x)\bigr|=O\Big(\frac{1}{\sqrt{\log n}}\Big).
\end{equation*}
\end{theorem}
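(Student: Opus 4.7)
The natural approach is to invoke Hwang's Quasi-Power Theorem (see \cite[Theorem~IX.8]{FS:2009}), which is tailor-made for this situation: whenever the moment generating function admits a quasi-power expansion
\[
\EE[e^{s f_0(T_n)}] = \exp\bigl(\phi_n\,U(s) + V(s)\bigr)\bigl(1 + O(1/\kappa_n)\bigr)
\]
uniformly for $s$ in a complex neighborhood of $0$, with $\phi_n,\kappa_n \to \infty$, $U,V$ holomorphic, and $U''(0) > 0$, the standardized sequence is asymptotically Gaussian with a Berry--Esseen rate of $O(\phi_n^{-1/2} + \kappa_n^{-1})$. In our setting the natural candidates are $\phi_n := \log n$, $\kappa_n := n$, $U := \mu$, and $V := \psi$ from \eqref{eq:GE-limit}, so that the rate automatically collapses to $O(1/\sqrt{\log n})$, the term $1/n$ being negligible.

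To verify the quasi-power hypothesis, I would start from Corollary~\ref{cor:AsymptoticsG1}. Since $\alpha(1) = 1$ lies in the interior of $I_1 = (\tfrac12,\tfrac32)$, for all real $t$ in a small neighborhood of $0$ the parameter $\alpha(e^t)$ remains in $I_1$ and
\[
G_n(e^t) = \frac{K(\alpha(e^t))}{\Gamma(\alpha(e^t))}\,n^{\alpha(e^t)-1}\,\bigl(1 + O(n^{-1})\bigr);
\]
multiplying by $e^{2t}$ yields precisely the desired quasi-power form. The critical step is to upgrade this expansion to complex $s$ in a small disc around $0$: since $F(u,z)$ is jointly holomorphic in the region of Theorem~\ref{thm:SolODE} and $\alpha(z)$ varies holomorphically in $z$ near $z=1$, the singular expansion of Theorem~\ref{thm:SingExpFuz} and the standard Flajolet--Sedgewick transfer via a Hankel-type contour extend with uniform remainders to a small complex neighborhood of $\alpha=1$; only the location of the dominant singularity at $u=1$ enters the contour construction, not the complex exponent.

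The variance condition is checked by the direct calculation $\mu'(0) = \alpha'(1) = \tfrac{2}{3}$ and $\mu''(0) = \alpha''(1) + \alpha'(1) = -\tfrac{8}{27} + \tfrac{2}{3} = \tfrac{10}{27} > 0$, consistent with Corollary~\ref{cor:ExplicitCumulant}. Hwang's theorem then simultaneously delivers the variance asymptotics $\var(f_0(T_n)) \sim \tfrac{10}{27}\log n$ and the advertised bound
\[
\sup_{x\in\RR} \bigl|\PP(Z_n \le x) - \Phi(x)\bigr| = O(1/\sqrt{\log n}).
\]
The main obstacle is precisely the uniformity of the $O(n^{-1})$ remainder in Corollary~\ref{cor:AsymptoticsG1} over a complex disc of $\alpha$-values around $1$; this is a technical but essentially routine matter, handled by tracking through the singularity-analysis estimates with a single Hankel contour that remains valid in a full complex neighborhood of $\alpha=1$.
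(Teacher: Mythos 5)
Your argument is essentially the paper's: both apply Hwang's quasi-power theorem with $\phi_n=\log n$, $U=\mu$, $V=\psi$ from \eqref{eq:GE-limit}, and the variance condition $\mu''(0)=\tfrac{10}{27}>0$. You go slightly further than the paper in pointing out that the quasi-power hypothesis requires the expansion uniformly for $s$ in a complex neighborhood of the origin (Corollary~\ref{cor:AsymptoticsG1} is stated for real $\alpha$ only), which the paper treats implicitly; your Hankel-contour remark is the correct way to close that step.
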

\begin{proof}
We apply Hwang's quasi-power method, see \cite[Section IX.5]{FS:2009}. This result asserts that if the cumulant
generating function of a sequence of random variables $X_n$ has the form
$\log \EE e^{tX_n}=\lambda_n u(t)+v(t)+o(1)$ uniformly near $t=0$ with
$u''(0)>0$ and $\lambda_n\to\infty$, then
\begin{equation*}
    \sup_{x\in\RR}\Big|\PP\Big(\frac{X_n-\EE X_n}{\sqrt{\var X_n}}\le x\Big)-\Phi(x)\Big|
        = O\big(\lambda_n^{-1/2}\big).
\end{equation*}
In our case, this is satisfied with $X_n=f_0(T_n)$, $u(t)=\mu(t)$, $v(t)=\psi(t)$, and $\lambda_n=\log n$. Moreover, $u''(0)=\tfrac{10}{27}$ according to Corollary \ref{cor:ExplicitCumulant}. This proves the claim. 
\end{proof}


Using \eqref{eq:GE-limit} it is also possible to derive higher-order corrections to the Gaussian limit law, known as Edgeworth expansions.

\begin{proposition}[First-order Edgeworth expansion]
For $y\in\RR$ let $\varphi(y):=e^{-y^2/2}/\sqrt{2\pi}$ be the standard Gaussian density and $\mathcal{S}(y):=\tfrac{1}{2}-\{y\}$ be the centered sawtooth function.
Then, for the normalized random variables defined as in Theorem \ref{thm:CLT-rate},
$$
\PP(Z_n\leq x) = \Phi(x) + {7\sqrt{30}\over 300}{1\over\sqrt{\log n}}(1-x^2)\varphi(y)+{1\over 2\sqrt{\log n}}\sqrt{27\over 10}\mathcal{S}(\EE f_0(T_n)-x\sigma_n)\varphi(x)+o(1/\sqrt{\log n})
$$
uniformly for $x\in\RR$, where $\sigma_n:=\sqrt{\kappa_n^{(2)}}$.
\end{proposition}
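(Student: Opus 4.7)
The plan is to combine a sharpening of the cumulant-generating-function expansion \eqref{eq:GE-limit} with a lattice-type Esseen inversion, exploiting the fact that $f_0(T_n)$ is integer-valued. The sawtooth term is the hallmark of Edgeworth expansions for lattice distributions and arises from the mismatch between the piecewise-constant $\PP(Z_n\le x)$ and its smooth approximation; it lives at a scale finer than what the CLT with rate of Theorem \ref{thm:CLT-rate} can detect.

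First I would sharpen \eqref{eq:GE-limit}. Retaining one further term in the singular expansion of Corollary \ref{cor:AsymptoticsG1} gives the stronger estimate $\Lambda_n(t)=\mu(t)+\psi(t)/\log n+O(1/n)$ uniformly for $t$ in a complex neighborhood of $0$. Writing the characteristic function of the standardized variable as
\[
\phi_n(s):=\EE e^{isZ_n}=e^{(2-\mu_n)is/\sigma_n}\,G_n\!\bigl(e^{is/\sigma_n}\bigr),\qquad \mu_n:=\EE f_0(T_n),
\]
and Taylor expanding $\mu$ and $\psi$ about $t=0$, the identities $\mu_n=\mu'(0)\log n+\psi'(0)+O(1/n)$ and $\sigma_n^2=\mu''(0)\log n+\psi''(0)+O(1/n)$, which follow by termwise differentiation of the refined expansion, cancel the $s^0$, $s^1$, and $-s^2/2$ contributions in the exponent and leave
\[
\log\phi_n(s)=-\tfrac{s^2}{2}-\tfrac{i\,\kappa_n^{(3)}}{6\,\sigma_n^3}\,s^3+o\!\bigl(1/\sqrt{\log n}\bigr)
\]
uniformly on compact $s$-sets. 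Exponentiating yields $\phi_n(s)=e^{-s^2/2}\bigl(1-\tfrac{i\kappa_n^{(3)}}{6\sigma_n^3}s^3\bigr)+o(1/\sqrt{\log n})$.

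Next I would apply Esseen's smoothing inequality for lattice-valued random variables, truncated at the natural boundary $T=\pi\sigma_n$. The inverse Fourier integral splits into two contributions. The low-frequency part is handled by termwise Hermite-polynomial inversion of the $s^3$ correction, which yields the smooth Edgeworth term
\[
\tfrac{\kappa_n^{(3)}}{6\sigma_n^3}(1-x^2)\varphi(x)=\tfrac{7\sqrt{30}}{300\sqrt{\log n}}(1-x^2)\varphi(x),
\]
where I used Corollary \ref{cor:ExplicitCumulant} together with the elementary identity $\kappa_n^{(3)}/(6\sigma_n^{3})=(14/81)/\bigl(6(10/27)^{3/2}\bigr)\cdot 1/\sqrt{\log n}$. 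The boundary of the smoothing interval at $\pm\pi\sigma_n$ encodes the lattice structure, and an Euler--Maclaurin (equivalently, Poisson summation) argument applied to the piecewise-constant $\PP(Z_n\le x)$ produces the sawtooth term $\tfrac{1}{2\sigma_n}\mathcal{S}(\mu_n-x\sigma_n)\varphi(x)$, whose prefactor matches $\tfrac{1}{2\sqrt{\log n}}\sqrt{27/10}$ after inserting $\sigma_n=\sqrt{(10/27)\log n}\,(1+o(1))$.

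The hard part will be the uniform control of $|\phi_n(s)|$ on the middle range $C\le|s|\le\pi\sigma_n-C$, where the Taylor expansion near $s=0$ no longer applies but the lattice contribution near $|s|=\pi\sigma_n$ has not yet concentrated. The singularity analysis of Section \ref{sec:BivariageGF} only controls $G_n(z)$ for $z$ near $1$, so one needs a separate bound of the form $|G_n(e^{i\theta})|\le C\,n^{\alpha(1)-1-\varepsilon}$ for $\theta$ bounded away from $0\pmod{2\pi}$. Such a bound could be extracted either from a direct analytic estimate on the hypergeometric representation of Theorem \ref{thm:SolODE} or by extending the singularity analysis to track subdominant singularities of $u\mapsto F(u,e^{i\theta})$. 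This step carries most of the technical weight, since without it the Esseen inversion cannot be closed and the sawtooth coefficient cannot be rigorously identified.
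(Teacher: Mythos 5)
Your plan is a genuinely different route from the paper's. The paper's proof is very short: it simply invokes a pre-packaged Edgeworth expansion for lattice random variables from \cite[Chapter 23]{BhattRao}, namely
\[
\PP(Z_n\leq x) = \Phi(x) + \frac{\gamma_n^{(3)}}{6}(1-x^2)\varphi(x) + \frac{1}{\sigma_n}\mathcal{S}(\EE f_0(T_n)-x\sigma_n)\varphi(x) + o(1/\sigma_n + \gamma_n^{(3)}),
\]
and then plugs in $\gamma_n^{(3)}=\kappa_n^{(3)}/(\kappa_n^{(2)})^{3/2}=\tfrac{7\sqrt{30}}{50\sqrt{\log n}}(1+o(1))$ from Corollary~\ref{cor:ExplicitCumulant}. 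You instead set out to \emph{derive} this expansion from scratch via Esseen smoothing, Hermite-polynomial inversion, and Poisson summation for the lattice. That is the standard proof of the cited theorem, and your identification of the smooth Edgeworth coefficient $\kappa_n^{(3)}/(6\sigma_n^3)=\tfrac{7\sqrt{30}}{300\sqrt{\log n}}$ and of the sawtooth prefactor $1/\sigma_n=\tfrac{1}{\sqrt{\log n}}\sqrt{27/10}(1+o(1))$ are both correct and match the paper.

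However, as you yourself acknowledge, there is a genuine gap: to close the Esseen inversion at the lattice boundary $T=\pi\sigma_n$, you need a uniform bound on $|G_n(e^{i\theta})|$ for $\theta$ bounded away from $0\!\!\pmod{2\pi}$, and the singularity analysis of Section~\ref{sec:BivariageGF} only gives asymptotics of $G_n(z)$ for $\alpha(z)$ in compact \emph{real} intervals, i.e.\ for $z$ near the positive real axis. For $|z|=1$ with $z\neq 1$ the exponent $\alpha(z)$ is complex, so Corollary~\ref{cor:AsymptoticsG1} does not apply and a separate decay estimate for the middle range is required. Without it, the sawtooth coefficient cannot be rigorously identified and the $o(1/\sqrt{\log n})$ error cannot be controlled. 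This is precisely the technical work that the paper sidesteps by citing a black-box theorem whose hypotheses package exactly this kind of control. Your approach would be correct and complete if you either proved the middle-range bound (for instance, from the hypergeometric representation of Theorem~\ref{thm:SolODE}) or, like the paper, invoked a quasi-power/lattice Edgeworth theorem whose proof already contains it.
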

\begin{proof}
The first-order Edgeworth expansion for non-negative integer-valued random variables says that
$$
\PP(Z_n\leq x) = \Phi(x) + \frac{\gamma_n^{(3)}}{6}(1-x^2)\varphi(x) + {1\over\sigma_n}\mathcal{S}(\EE f_0(T_n)-x\sigma_n)\varphi(x) + o(1/\sigma_n + \gamma_n^{(3)})
$$
uniformly in $x\in\RR$, where $\gamma_n^{(3)}:=\kappa_n^{(3)}/(\log n)^{3/2}$ is the third standardized cumulant of $f_0(T_n)$, $\varphi(y)=\tfrac{1}{\sqrt{2\pi}}e^{-y^2/2}$, $y\in\RR$, see \cite[Chapter 23]{BhattRao}. In our case, 
\begin{align*}
    \kappa_n^{(2)} &= \frac{10}{27}(\log n)(1+o(1)),\\
    \gamma_n^{(3)} &= {\kappa_n^{(3)}\over(\kappa_n^{(2)})^{3/2}} = {{14\over 81}(\log n)(1+o(1))\over\big({10\over 27}(\log n)(1+o(1))\big)^{3/2}} = {14\over 81}\Big({27\over 10}\Big)^{3/2} {1\over\sqrt{\log n}}(1+o(1))= {7\sqrt{30}\over 50}{1\over\sqrt{\log n}}(1+o(1)),
\end{align*}
and we obtain the result, since $|S(y)|\leq 1/2$ for all $y\in\RR$.
\end{proof}

\begin{remark}
    Higher-order expansions can be deduced in the same way.
\end{remark}

\subsection{Moderate and large deviation principle, laws of large numbers}\label{sec:MDPandLDP}

The next results we present are a moderate and a large deviation principle for $f_0(T_n)$, the latter of which was not within reach of the previously existing methods. Let us recall that a sequence of random variables $X_n$ satisfies a large deviation principle on $\RR$ with speed $s_n$ and a good rate function $I:[0,\infty)\to[-\infty,+\infty]$ if
\begin{itemize}
    \item[-] $s_n\geq 0$ with $s_n\to\infty$,
    \item[-] $I$ is lower semicontinuous and has compact level sets $\{x\in\RR:I(x)\leq c\}$ for any $c\geq 0$,
    \item[-] for every Borel set $B\subseteq\RR$ we have
    \begin{align*}
        -\inf_{x\in{\rm int}(B)}I(x) \leq \liminf_{n\to\infty}{1\over s_n}\log\PP(X_n\in B) \leq \limsup_{n\to\infty}{1\over s_n}\log\PP(X_n\in B) \leq -\inf_{x\in{\rm cl(B)}}I(x),
    \end{align*}
    where ${\rm int}(B)$ and ${\rm cl(B)}$ denote the interior and the closure of $B$, respectively.
\end{itemize}
Roughly speaking, if the random variables $X_n$ satisfy a large deviation principle as above, we have that the probabilities of rare events decay exponentially at speed $s_n$ and the rate function $I$ quantifies this decay. In particular,
$$
\PP(X_n\in B) \approx e^{-s_n I(B)},\qquad I(B):=\inf_{x\in B}I(x),
$$
for well-behaved Borel sets $B\subseteq\RR$ as $n\to\infty$. For background material on large deviation theory, we refer to \cite{DZlargeDeviations,DenHollander}.

We start with a large deviation principle on scales between $\sqrt{\log n}$ and $\log n$, which is also called a moderate deviation principle. The resulting rate function $I(x)=x^2/2$ reflects the exponent of the standard Gaussian distribution appearing in the central limit theorem. We note that a moderate deviation principle for $f_0(T_n)$ was obtained in \cite{GT:2021} by entirely different methods.

\begin{theorem}[Moderate deviation principle]\label{thm:MDP}
Let $\{a_n\}_{n\in\NN}$ be any positive sequence with
\begin{equation*}
a_n\to\infty\qquad\text{and}\qquad \frac{a_n}{\sqrt{\log n}}\to 0.
\end{equation*}
Then the rescaled variables
\begin{equation*}
Y_n:=\frac{f_0(T_n)-{2\over 3}\log n}{\sqrt{10\over 27}\,{a_n\sqrt{\log n}}}
\end{equation*}
satisfy a large deviation principle on $\RR$ with speed $a_n^2$ and a good rate function $I(x)=x^2/2$.
\end{theorem}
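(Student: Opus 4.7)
The plan is to invoke the Gärtner--Ellis theorem, see for instance \cite[Theorem 2.3.6]{DZlargeDeviations}. This reduces the task to identifying the limiting normalized cumulant generating function
\begin{equation*}
\Lambda(\lambda):=\lim_{n\to\infty}\frac{1}{a_n^2}\log\EE\bigl[e^{\lambda a_n^2 Y_n}\bigr]
\end{equation*}
and checking that it is finite and differentiable on all of $\RR$. If we can show $\Lambda(\lambda)=\lambda^2/2$, then the good rate function is its Legendre transform $I(x)=\sup_{\lambda\in\RR}(\lambda x-\lambda^2/2)=x^2/2$, as claimed.

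The first step is to unfold the definition of $Y_n$: with $\sigma:=\sqrt{10/27}$, one has
\begin{equation*}
\lambda a_n^2 Y_n = t_n\Bigl(f_0(T_n)-\tfrac{2}{3}\log n\Bigr),
\qquad\text{where}\qquad
t_n:=\frac{\lambda a_n}{\sigma\sqrt{\log n}}.
\end{equation*}
Since $a_n/\sqrt{\log n}\to 0$, we have $t_n\to 0$, so the expansion \eqref{eq:GE-limit} from Section \ref{sec:StartingPoint} applies uniformly at $t=t_n$. Next, I would use Corollary \ref{cor:ExplicitCumulant} (or equivalently Lemma \ref{lem:Prefactor}) to expand
\begin{equation*}
\mu(t)=\tfrac{2}{3}t+\tfrac{5}{27}t^2+O(t^3),\qquad t\to 0,
\end{equation*}
since $\mu(0)=0$, $\mu'(0)=2/3$ and $\mu''(0)=10/27$, and similarly observe that $\psi(t)=\psi(0)+O(t)$ is bounded near the origin.

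Substituting into \eqref{eq:GE-limit} gives
\begin{align*}
\frac{1}{a_n^2}\log\EE\bigl[e^{\lambda a_n^2 Y_n}\bigr]
&=\frac{1}{a_n^2}\Bigl(\mu(t_n)\log n-\tfrac{2}{3}t_n\log n+\psi(t_n)+o(1)\Bigr)\\
&=\frac{1}{a_n^2}\Bigl(\tfrac{5}{27}t_n^2\log n+O(t_n^3\log n)+O(1)\Bigr).
\end{align*}
The key bookkeeping is the identity
\begin{equation*}
t_n^2\log n=\frac{\lambda^2 a_n^2}{\sigma^2}=\frac{27\lambda^2 a_n^2}{10},
\end{equation*}
so the quadratic term contributes exactly $\tfrac{5}{27}\cdot\tfrac{27}{10}\lambda^2=\lambda^2/2$. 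The cubic error satisfies $t_n^3\log n=t_n\cdot t_n^2\log n=O(t_n\,a_n^2)=o(a_n^2)$, since $t_n\to 0$, and the $O(1)$ term from $\psi$ is crushed by dividing through by $a_n^2\to\infty$. This yields $\Lambda(\lambda)=\lambda^2/2$ for every $\lambda\in\RR$, and the Gärtner--Ellis theorem delivers the moderate deviation principle with speed $a_n^2$ and rate function $I(x)=x^2/2$.

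The only subtlety worth highlighting is the domain of validity: the expansion \eqref{eq:GE-limit} is established uniformly for $t$ in a fixed neighborhood of $0$, and the hypothesis $a_n/\sqrt{\log n}\to 0$ is exactly what guarantees $t_n$ lies in this neighborhood for all sufficiently large $n$, uniformly for $\lambda$ in bounded sets. Since the limiting function $\lambda^2/2$ is finite and differentiable everywhere, the Gärtner--Ellis hypotheses are satisfied and no steepness issues arise.
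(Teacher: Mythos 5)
Your proposal is correct and follows essentially the same route as the paper: unfold $Y_n$, set $t_n=\lambda a_n/(\sigma\sqrt{\log n})\to 0$, plug into the uniform expansion \eqref{eq:GE-limit}, Taylor-expand $\mu$ at the origin so that the quadratic term gives $\lambda^2/2$, absorb the cubic and $\psi$ contributions into $o(1)$, and finish with Gärtner--Ellis. The bookkeeping and the error estimates match the paper's proof line by line.
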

\begin{proof}
Fix $s\in\RR$ and put $m:=2/3$, $\sigma^2:=10/27$ and
\begin{equation*}
t_n:=\frac{s\,a_n}{\sigma\sqrt{\log n}}\to 0
\end{equation*}
as $n\to\infty$.
Consider the scaled cumulant generating function of $Y_n$:
\begin{equation*}
\Lambda_n(s):=\frac{1}{a_n^2}\log \EE\exp\Big(s\,a_n\,\frac{f_0(T_n)-m\log n}{\sigma\sqrt{\log n}}\Big)
=\frac{1}{a_n^2}\Big(\log \EE e^{t_n f_0(T_n)}-t_n m\log n\Big).
\end{equation*}
By \eqref{eq:GE-limit},
\begin{equation*}
\log \EE e^{t_n f_0(T_n)}=\mu(t_n)\,\log n+\psi(t_n)+o(1),
\end{equation*}
hence
\begin{equation*}
\Lambda_n(s)=\frac{\log n}{a_n^2}\,\big(\mu(t_n)-m\,t_n\big)
+\frac{\psi(t_n)}{a_n^2}+o(1/a_n^2).
\end{equation*}
Using that $\mu(t)$ is analytic around $t=0$, we can make a Taylor expansion, which yields
\begin{equation*}
\mu(t)=m t+\frac{\sigma^2}{2}t^2+\frac{\mu^{(3)}(0)}{6}t^3+O(t^4).
\end{equation*}
Therefore,
\begin{align*}
\frac{\log n}{a_n^2}\big(\mu(t_n)-m t_n\big)
&=\frac{\log n}{a_n^2}\left(\frac{\sigma^2}{2}\frac{s^2 a_n^2}{\sigma^2 \log n}
+\frac{\mu^{(3)}(0)}{6}\frac{s^3 a_n^3}{\sigma^3 (\log n)^{3/2}}
+O\Big(\frac{a_n^4}{(\log n)^2}\Big)\right)\\
&=\frac{s^2}{2}\;+\; \frac{\mu^{(3)}(0)}{6\sigma^3}\,\frac{s^3\,a_n}{\sqrt{\log n}}
\;+\;O\Big(\frac{a_n^2}{\log n}\Big).
\end{align*}
Since $a_n/\sqrt{\log n}\to 0$, both error terms tend to $0$ uniformly on compact sets. Moreover,
$\psi(t_n)=\psi(0)+O(t_n)=\psi(0)+O(a_n/\sqrt{\log n})$, so $\psi(t_n)/a_n^2\to 0$. Hence,
\begin{equation*}
\lim_{n\to\infty}\Lambda_n(s) = \Lambda(s):=\frac{s^2}{2},
\end{equation*}
pointwise for all $s\in\RR$. The limiting function $\Lambda$ is  finite everywhere, differentiable, and strictly convex. This allows us to apply the Gärtner--Ellis theorem \cite[Theorem 2.3.6]{DZlargeDeviations} to conclude that the sequence of random variables $Y_n$ satisfies an LDP with speed $a_n^2$ and good rate function $\Lambda^*$, where
$$
\Lambda^{*}(x) = \sup_{t\in\RR}[xt-\Lambda(t)]=\sup_{t\in\RR}\Big[xt-{t^2\over 2}\Big],\qquad s\in\RR,
$$
denotes the Legendre--Fenchel transform of $\Lambda$. Since ${\dint\over\dint t}[xt-{t^2\over 2}]=x-t=0$ if and only if $t=x$, it follows that $\Lambda^{*}(x)=x^2/2$, as claimed.
\end{proof}

Next, we turn to the case $a_n=\sqrt{\log n}$ not covered by Theorem \ref{thm:MDP}. We will see that the random variables $f_0(T_n)/\log n$ still satisfy a large deviation principle, but the rate function, illustrated in Figure \ref{fig:LDP}, is fundamentally different from the Gaussian rate function in Theorem \ref{thm:MDP}.

\begin{figure}[t]
\centering
\begin{tikzpicture}
  \begin{axis}[
      width=10cm, height=6cm,
      domain=0.001:3,
      samples=400,
      axis lines=middle,
      xmin=0, xmax=3,
      ymin=-0.1, ymax=4
    ]
    \addplot[blue, thick] 
      {x*ln(x/2) + x*ln(2*x + sqrt(1+4*x^2)) - x - 0.5*sqrt(1+4*x^2) + 1.5};

    \addplot[dashed, red, thick] coordinates {(2/3,-0.1) (2/3,4)};
    \node[red, anchor=north east] at (axis cs:2/3,0) {$\tfrac{2}{3}$};
  \end{axis}
\end{tikzpicture}
\caption{The rate function $I(x)$ for $x\geq 0$ of the large deviation principle in Theorem \ref{thm:LDP}. It is strictly convex with a unique minimum of $0$ at $x=2/3$.}
\label{fig:LDP}
\end{figure}
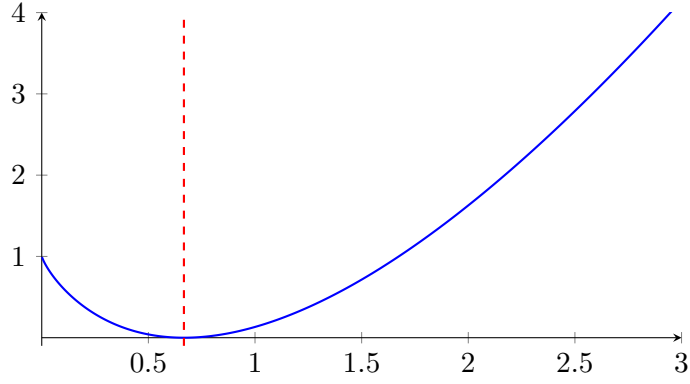

\begin{theorem}[Large deviation principle]\label{thm:LDP}
The sequence of random variables $\tfrac{f_0(T_n)}{\log n}$ satisfies a large deviation principle on $\RR$ with speed $\log n$ and good rate function
\begin{equation}\label{eq:rate-explicit}
I(x)=\begin{cases}
    + \infty &: x<0\\
    x\log\!\Big(\frac{x}{2}\Big)+x\,\operatorname{arsinh}(2x)\,-\,x\,-\,\frac12\sqrt{1+4x^2}\,+\,\frac32 &: x\geq 0.
\end{cases}
\end{equation}
\end{theorem}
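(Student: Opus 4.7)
The plan is to apply the Gärtner--Ellis theorem to the normalised variables $Y_n:=f_0(T_n)/\log n$. First, I would upgrade the asymptotic \eqref{eq:GE-limit} from a neighborhood of $0$ to all $t\in\RR$ by a pointwise application of Corollary~\ref{cor:AsymptoticsG1} or Corollary~\ref{cor:AsymptoticsG2} to $G_n(e^t)$, depending on whether $\alpha(e^t)\in I_k$ or $\alpha(e^t)=\tfrac12+k$; in either case the leading term gives $\log G_n(e^t)=(\alpha(e^t)-1)\log n+O(\log\log n)$, and hence
\[
\Lambda_n(t):=\tfrac{1}{\log n}\log\EE e^{tf_0(T_n)}\;\longrightarrow\;\mu(t):=\alpha(e^t)-1
\]
for every $t\in\RR$. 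A short computation shows that $\mu$ is smooth, finite and strictly convex on $\RR$, with derivative $\mu'(t)=\tfrac{2e^t}{\sqrt{1+8e^t}}$ mapping $\RR$ bijectively onto $(0,\infty)$.

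I would then invoke Gärtner--Ellis \cite[Theorem~2.3.6]{DZlargeDeviations}. A subtle point is that $\mu$ is \emph{not} essentially smooth in the Dembo--Zeitouni sense, because $\mu'(t)\to 0$ as $t\to-\infty$. Nevertheless, the set of exposed points of the Legendre transform $I:=\mu^*$ coincides with $\mu'(\RR)=(0,\infty)$, which is the interior of the effective domain $[0,\infty)$ of $I$. Since $f_0(T_n)\ge 2$ almost surely (so that $Y_n\ge 0$), the closed-set upper bound together with the exposed-point lower bound of Gärtner--Ellis, supplemented by continuity of $I$ on $[0,\infty)$ to bridge the boundary point $x=0$, will yield the full LDP at speed $\log n$ with good rate function $I$.

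It remains to compute $I(x)=\sup_{t\in\RR}[xt-\mu(t)]$ in closed form. For $x<0$, letting $t\to-\infty$ (where $\mu\to-1$) gives $I(x)=+\infty$. For $x>0$ the stationarity condition $\mu'(t)=x$ reduces, after squaring, to a quadratic in $e^t$ whose unique positive root is $e^{t^*}=\tfrac{x}{2}\bigl(2x+\sqrt{1+4x^2}\bigr)$, so that $t^*(x)=\log(x/2)+\arsinh(2x)$ on using $\arsinh(u)=\log(u+\sqrt{1+u^2})$. The algebraic identity $(2x+\sqrt{1+4x^2})^2=1+8x^2+4x\sqrt{1+4x^2}=1+8e^{t^*}$ yields $\mu(t^*)=x+\tfrac12\sqrt{1+4x^2}-\tfrac32$, and substituting into $I(x)=xt^*(x)-\mu(t^*(x))$ produces the formula in \eqref{eq:rate-explicit}. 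For $x=0$ the supremum is attained only in the limit $t\to-\infty$, giving $I(0)=\sup_t(-\mu(t))=1$, consistent with the formula under the convention $0\log 0=0$.

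The main obstacle is therefore the failure of essential smoothness of $\mu$ at $-\infty$, which ordinarily would block the clean form of Gärtner--Ellis. This is overcome by continuity of $I$ at the boundary point $x=0$ of its effective domain: the exposed-point lower bound covers all of $(0,\infty)$, and continuity of $I$ together with the almost sure bound $Y_n\ge 0$ eliminates any gap near the origin, so no separate direct estimate of $\PP(Y_n\in[0,\varepsilon))$ is required.
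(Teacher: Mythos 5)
Your overall strategy is the same as the paper's: extend \eqref{eq:GE-limit} to pointwise convergence $\Lambda_n(t)\to\mu(t)$ on all of $\RR$, invoke G\"artner--Ellis, and compute the Legendre--Fenchel transform of $\mu$ explicitly. The Legendre computation is correct and matches the paper (including the boundary values $I(0)=1$ and $I(x)=+\infty$ for $x<0$).

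However, the ``subtle point'' you raise about essential smoothness is a misconception. In the Dembo--Zeitouni sense, steepness requires $|\nabla\Lambda(\lambda_n)|\to\infty$ along sequences $\lambda_n\in\mathcal{D}_\Lambda^\circ$ that converge to a \emph{boundary point of $\mathcal{D}_\Lambda^\circ$}. Here $\Lambda=\mu$ has effective domain $\mathcal{D}_\mu=\RR$, which has no boundary points, so steepness is vacuously satisfied; $\mu$ is finite and differentiable everywhere and hence \emph{is} essentially smooth. The fact that $\mu'(t)\to 0$ as $t\to-\infty$ is irrelevant --- that concerns the boundary of the domain of the rate function $\mu^*$, not the boundary of $\mathcal{D}_\mu$. (Compare the elementary Poisson case, where $\Lambda(t)=\lambda(e^t-1)$ also has $\Lambda'(\RR)=(0,\infty)$ yet the full LDP holds by G\"artner--Ellis.) Consequently the paper's direct application of \cite[Theorem~2.3.6]{DZlargeDeviations} is clean, and your workaround via the exposed-point lower bound, continuity of $I$ at $0$, and the a.s.\ bound $Y_n\ge 0$, while logically sound, re-derives what the theorem already gives and is not needed. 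Also, your error estimate $\log G_n(e^t)=(\alpha(e^t)-1)\log n+O(\log\log n)$ is weaker than necessary: Corollaries~\ref{cor:AsymptoticsG1} and \ref{cor:AsymptoticsG2} give an $O(1)$ remainder, though any $o(\log n)$ bound suffices for the limit $\Lambda_n(t)\to\mu(t)$.
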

\begin{proof}
From \eqref{eq:GE-limit} we conclude that $\Lambda_n(t)\to\mu(t)$ pointwise on $\RR$ as $n\to\infty$. From the explicit formula for $\mu$ we compute
\begin{equation*}
\mu'(t)=\frac{2e^t}{\sqrt{8e^t+1}}>0,\qquad
\mu''(t)=\frac{2e^t(8e^t+2)}{(8e^t+1)^{3/2}}>0,
\end{equation*}
so $\mu$ is analytic, strictly convex, and finite everywhere on $\RR$.

By the Gärtner--Ellis theorem \cite[Theorem 2.3.6]{DZlargeDeviations}, the sequence of random variables $f_0(T_n)/\log n$ satisfies a large deviation principle on $\RR$ with speed $\log n$ and good rate function $I=\mu^{*}$, where again
\begin{equation}\label{eqn:LF-Irate}
I(x)=\mu^{*}(x) = \sup_{t\in\RR}[xt-\mu(t)],\qquad x\in\RR,
\end{equation}
denotes the Legendre--Fenchel transform of $\mu$.

Since $\mu'(t)$ is strictly increasing with range $(0,\infty)$, the effective domain $\{x\in\RR:I(x)<\infty\}$ of $I$ is $[0,\infty)$, and $I(x)=+\infty$ for $x<0$. For $x>0$ the maximizer in the Legendre--Fenchel transform is the unique $t=t(x)$ with $\mu'(t)=x$. Writing $y=e^{t(x)}$ and solving
\begin{equation*}
x=\frac{2y}{\sqrt{8y+1}}
\quad\Longleftrightarrow\quad
y=x^2+\frac{x}{2}\sqrt{1+4x^2},
\end{equation*}
one obtains
\begin{equation*}
I(x)=xt(x)-\mu\bigl(t(x)\bigr)
= x\log y-\frac{\sqrt{8y+1}-3}{2}.
\end{equation*}
Since $\sqrt{8y+1} = \tfrac{2y}{x}$, this becomes
\begin{equation*}
I(x) = x\log y - \frac{y}{x} + \frac{3}{2}.
\end{equation*}
Factoring $y$ gives
\begin{equation*}
y = \frac{x}{2}\Bigl(2x+\sqrt{1+4x^2}\Bigr),
\end{equation*}
so that
\begin{equation*}
\log y = \log\Bigl(\tfrac{x}{2}\Bigr) + \log\bigl(2x+\sqrt{1+4x^2}\bigr)
= \log\Bigl(\tfrac{x}{2}\Bigr) + \arsinh(2x).
\end{equation*}
Hence
\begin{equation*}
x\log y = x\log\!\Bigl(\tfrac{x}{2}\Bigr) + x\,\arsinh(2x).
\end{equation*}
Moreover,
\begin{equation*}
\frac{y}{x} = x + \tfrac{1}{2}\sqrt{1+4x^2}.
\end{equation*}
Combining these identities yields
\begin{equation*}
I(x) \;=\; x\log\Bigl(\tfrac{x}{2}\Bigr) + x\,\arsinh(2x) \;-\; x 
\;-\; \tfrac{1}{2}\sqrt{1+4x^2} + \tfrac{3}{2}.
\end{equation*}
Together with $I(x)=+\infty$ for $x<0$, this proves \eqref{eq:rate-explicit}. 
\end{proof}

Parts of the rate function $I(x)$ appearing in Theorem \ref{thm:LDP} admit a clear probabilistic interpretation. To develop it, we decompose $I(x)$ as follows:
$$
I(x) = \Big[x\log\Bigl(\tfrac{x}{2}\Bigr)-x+2\Big] + \Big[x\,\arsinh(2x)-\tfrac{1}{2}\sqrt{1+4x^2} -\tfrac{1}{2}\Big] =: I_1(x) + I_2(x).
$$
We extend $I_1$ to the negative real axis by putting $I_1(x):=+\infty$, $I_2$ is well-defined for negative arguments. Then, using the Gärtner--Ellis theorem, the following can be checked: The function $I_1(x)$ is exactly the good rate function of a large deviation principle for the random variables $\tfrac{X_n}{\log n}$, where $X_n$ is Poisson distributed with parameter $2\log n$.\\ Indeed, for all $n\in\NN$,
$$
{1\over\log n}\log\EE e^{tX_n} = {1\over\log n}\big(2\log n(e^t-1)\big) = 2(e^t-1),\qquad t\in\RR,
$$
and the Legendre--Fenchel transform of this function is precisely $I_1(x)$. However, since $I_2$ takes negative values, it cannot be a rate function. On the other hand, the function $J_2(x):=I_2(x)+1$, $x\in\RR$, again has a probabilistic meaning. It is the good rate function of a large deviation principle for the random variables $\tfrac{Y_n}{\log n}$ with $Y_n:=Y_n^{(1)}-Y_n^{(2)}$ and where $Y_n^{(1)}$ and $Y_n^{(2)}$ are independent and Poisson distributed with parameter $\tfrac{1}{4}\log n$. Equivalently, $Y_n=\sum_{k=1}^{N_n}\varepsilon_k$, where $N_n$ is Poisson distributed with parameter $\tfrac{1}{2}\log n$ and $(\varepsilon_k)$ is a sequence of independent Rademacher random variables taking values $\pm 1$ with probability $1/2$, which is also independent of $N_n$. Indeed, for $n\in\NN$ we have
    $$
    \log\EE e^{tY_n} = {\log n\over 4}(e^t-1) +{\log n\over 4}(e^{-t}-1) = {\log n\over 2}(\cosh(t)-1),
    $$
    and hence
    $$
    {1\over\log n}\log\EE e^{tY_n} = {1\over 2}(\cosh(t) -1),\qquad t\in\RR.
    $$
    The Legendre--Fenchel transform of this function is precisely $J_2(x)$.

\medskip


The rate function $I(x)$ in Theorem \ref{thm:LDP} is coercive on $[0,\infty)$, which means that $I(x)\to\infty$ as $x\to\infty$. Moreover, $I$ is continuous and attains its unique minimum $0$ at $x=\mu'(0)=2/3$, see Figure \ref{fig:LDP}. These two facts already implies a weak law of large numbers:

\begin{corollary}[Weak law of large numbers]\label{cor:WLLN}
    We have
    $$
   \frac{f_0(T_n)}{\log n}\to{2\over 3}\qquad\text{in probability as}\ n\to\infty.
   $$
\end{corollary}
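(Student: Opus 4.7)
The plan is to deduce the weak law of large numbers directly from the large deviation principle in Theorem \ref{thm:LDP} via the standard argument that an LDP with a good rate function having a unique minimum forces concentration at that minimum. Fix $\varepsilon>0$ and consider the closed set
\[
B_\varepsilon:=\{x\in\RR:|x-2/3|\ge\varepsilon\}.
\]
I would first verify that
\[
\iota_\varepsilon:=\inf_{x\in B_\varepsilon}I(x)>0.
\]
On $(-\infty,0)$ we have $I\equiv+\infty$, so it suffices to show $\inf_{x\in B_\varepsilon\cap[0,\infty)}I(x)>0$. This is immediate from three facts that are already in hand: $I$ is continuous on $[0,\infty)$ with $I(2/3)=0$ being the unique zero (by strict convexity of $\mu$ and the identity $I=\mu^{*}$), and $I$ is coercive on $[0,\infty)$, since $\mu(t)$ grows like $\tfrac12 e^{t/2}$ as $t\to\infty$, which implies $I(x)/x\to\infty$ as $x\to\infty$. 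In particular, the infimum is attained on the compact set $B_\varepsilon\cap[0,M]$ for $M$ sufficiently large, and it must be strictly positive because $2/3\notin B_\varepsilon$ and $2/3$ is the unique minimizer.

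Next, I would apply the upper-bound part of the large deviation principle from Theorem \ref{thm:LDP} to the closed set $B_\varepsilon$:
\[
\limsup_{n\to\infty}\frac{1}{\log n}\log\PP\!\left(\frac{f_0(T_n)}{\log n}\in B_\varepsilon\right)\le -\iota_\varepsilon<0.
\]
Consequently, for every $\delta\in(0,\iota_\varepsilon)$ and all $n$ sufficiently large,
\[
\PP\!\left(\bigg|\frac{f_0(T_n)}{\log n}-\frac{2}{3}\bigg|\ge\varepsilon\right)\le n^{-(\iota_\varepsilon-\delta)},
\]
which tends to $0$ as $n\to\infty$. Since $\varepsilon>0$ was arbitrary, this gives $f_0(T_n)/\log n\to 2/3$ in probability, as claimed.

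No step looks genuinely difficult; the only thing to double-check carefully is the positivity $\iota_\varepsilon>0$, which reduces to the strict convexity and continuity of $I$ on $[0,\infty)$ together with the uniqueness of the minimizer at $2/3$. These follow from the strict convexity of $\mu$ already established in the proof of Theorem \ref{thm:LDP}, since the Legendre--Fenchel transform of a strictly convex, essentially smooth function is itself strictly convex on its effective domain, with a unique minimum equal to $0$ at $\mu'(0)=2/3$. As a bonus, the argument actually shows convergence at exponential rate in $n$, which is stronger than convergence in probability.
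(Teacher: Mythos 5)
Your proposal is correct and follows essentially the same route as the paper: apply the upper-bound part of the LDP from Theorem~\ref{thm:LDP} to the closed set $\{|x-2/3|\ge\varepsilon\}$, then use continuity, coercivity, and uniqueness of the zero of $I$ at $2/3$ to conclude the infimum over that set is strictly positive, giving polynomial decay of the deviation probabilities. (Minor aside: $\mu(t)\sim\sqrt{2}\,e^{t/2}$, not $\tfrac12 e^{t/2}$, as $t\to\infty$, but this does not affect the coercivity argument.)
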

\begin{proof}
   Indeed, for $\varepsilon>0$ and $n\in\NN$ write $B_{n,\varepsilon}$ for the event that $|\tfrac{f_0(T_n)}{\log n}-\tfrac{2}{3}|>\varepsilon$. The large deviation principle in Theorem \ref{thm:LDP} applied to $B=B_{n,\varepsilon}$ yields
   $$
   \limsup_{n\to\infty}{1\over\log n}\log\PP(B_{n,\varepsilon}) \leq -\inf_{|x-2/3|>\varepsilon}I(x).
   $$
   Since $I$ is continuous, coercive and attains its unique minimum at $2/3$, $\inf_{|x-2/3|>\varepsilon}I(x)=:\delta>0$. It follows that
   \begin{equation}\label{eq:WLLN}
   \PP(B_{n,\varepsilon}) \leq e^{-\delta\log n + o(\log n)} = n^{-\delta + o(1)} \to 0,  
   \end{equation}
   as required for the weak law of large numbers. 
\end{proof}

Our next result shows that the weak law of large numbers can be upgraded to a strong law.

\begin{proposition}[Strong law of large numbers]
    We have
    $$
    {f_0(T_n)\over\log n}\to\frac{2}{3}\qquad\text{almost surely.}
    $$
\end{proposition}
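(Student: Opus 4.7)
The plan is to upgrade the weak law of large numbers (Corollary~\ref{cor:WLLN}) to almost sure convergence by combining the large deviation principle (Theorem~\ref{thm:LDP}) with the Borel--Cantelli lemma along a super-exponentially growing sub-sequence, and then to interpolate to the full sequence by means of a martingale maximal inequality.

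For each fixed $\varepsilon>0$, set $A_{n,\varepsilon}:=\{|f_0(T_n)/\log n-2/3|>\varepsilon\}$. Theorem~\ref{thm:LDP} together with the fact that the strictly convex rate function $I$ attains its unique zero at $x=2/3$ gives
\[
\PP(A_{n,\varepsilon})\leq n^{-\delta(\varepsilon)+o(1)},\qquad \delta(\varepsilon):=\min_{|x-2/3|\geq\varepsilon}I(x)>0.
\]
Taking $n_k:=\lceil e^{k^2}\rceil$, so that $\log n_k\sim k^2$, the bound becomes $e^{-\delta(\varepsilon)k^2(1+o(1))}$, which is summable in $k$. Borel--Cantelli, coupled with a diagonal extraction over a countable sequence $\varepsilon_j\downarrow 0$, yields $f_0(T_{n_k})/\log n_k\to 2/3$ almost surely.

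To extend to all $n\in\NN$, I would work with the Doob martingale $M_n:=f_0(T_n)-f_0(T_0)-A_n$, where the predictable compensator is
\[
A_n:=\sum_{k=1}^n\EE\bigl[f_0(T_k)-f_0(T_{k-1})\,\big|\,\mathcal{F}_{k-1}\bigr],\qquad \mathcal{F}_n:=\sigma(\bX_1,\ldots,\bX_n).
\]
The variance of the martingale decomposes as $\text{var}(M_n)=\sum_{k=1}^n\EE\,\text{var}(\Delta_kf_0\mid\mathcal{F}_{k-1})$. Using the geometric fact that a uniform point in $T$ falls outside $T_{k-1}$ only with probability $\sim (2/(3k))\log k$ (which is Efron's identity combined with Buchta's expectation asymptotics) and, when it does, typically hides only $O(1)$ existing vertices, one obtains the conditional second-moment estimate $\EE[(\Delta_kf_0)^2\mid\mathcal{F}_{k-1}]=O(\log k/k)$, whence $\text{var}(M_n)=O(\log n)$. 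Doob's $L^2$ maximal inequality then yields
\[
\PP\!\left(\sup_{m\leq n_{k+1}}|M_m|>\varepsilon\log n_{k+1}\right)\leq\frac{\text{var}(M_{n_{k+1}})}{(\varepsilon\log n_{k+1})^2}=O\!\left(\tfrac{1}{\varepsilon^2 k^2}\right),
\]
which is summable in $k$. A second application of Borel--Cantelli gives $M_n/\log n\to 0$ almost surely. Since $f_0(T_n)=M_n+f_0(T_0)+A_n$ and $\EE A_n=(2/3)\log n+O(1)$, the subsequence convergence for $f_0(T_{n_k})/\log n_k$ transfers to $A_{n_k}/\log n_k\to 2/3$ a.s., and the block-wise bound $|A_n-A_{n_k}|\leq\sum_{j=n_k+1}^{n_{k+1}}\EE[|\Delta_jf_0|\mid\mathcal{F}_{j-1}]=o(\log n_k)$ a.s.\ (obtained from the same cap-geometric estimate) closes the interpolation and gives $f_0(T_n)/\log n\to 2/3$ almost surely.

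The hardest point is expected to be the geometric estimate $\EE[(\Delta_kf_0)^2\mid T_{k-1}]=O(\log k/k)$ governing both the martingale variance and the compensator oscillation. While heuristically $\Delta_kf_0$ vanishes unless $\bX_k$ falls in the thin cap $T\setminus T_{k-1}$ and even then typically covers only boundedly many chain vertices, a rigorous verification requires a careful analysis of the depth profile of this cap and of the number of vertices it potentially hides for deep coverage events near the boundary of $T_{k-1}$.
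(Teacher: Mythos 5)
Your first step (LDP plus Borel--Cantelli along the super-exponential subsequence $n_k=\lceil e^{k^2}\rceil$, followed by a diagonal argument over $\varepsilon_j\downarrow0$) coincides with the paper's. The interpolation step, however, is genuinely different: the paper closes the argument by a purely probabilistic block-wise union bound derived from the same LDP tail estimates, with no geometric input whatsoever, whereas you introduce a Doob-martingale decomposition $f_0(T_n)=M_n+A_n+f_0(T_0)$ together with an Efron--Stein-type conditional increment estimate. The martingale route is a legitimate alternative in spirit, but as written it contains several internal errors.

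First, the claimed estimate $\EE\bigl[(\Delta_k f_0)^2\mid\mathcal{F}_{k-1}\bigr]=O(\log k/k)$ does not imply $\var(M_n)=O(\log n)$: since $\sum_{k\le n}\log k/k\asymp\tfrac12(\log n)^2$, it only yields $\var(M_n)=O\bigl((\log n)^2\bigr)$. With this weaker bound Doob's $L^2$ maximal inequality gives
\[
\PP\Bigl(\sup_{m\le n_{k+1}}|M_m|>\varepsilon\log n_{k+1}\Bigr)\le\frac{\var(M_{n_{k+1}})}{\varepsilon^2(\log n_{k+1})^2}=O(\varepsilon^{-2}),
\]
which is not summable in $k$, so the second Borel--Cantelli application fails. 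You would need the stronger estimate $\EE\,\var\bigl(\Delta_k f_0\mid\mathcal{F}_{k-1}\bigr)=O(1/k)$, which is plausible (it is consistent with $\var f_0(T_n)\sim\tfrac{10}{27}\log n$) but is a different and more delicate claim than the one you make. Second, the block-wise compensator bound is also inconsistent with your own heuristic: with $\EE\bigl[|\Delta_j f_0|\mid\mathcal{F}_{j-1}\bigr]\asymp\log j/j$ and the triangle inequality, $\sum_{j=n_k+1}^{n_{k+1}}\EE\bigl[|\Delta_j f_0|\mid\mathcal{F}_{j-1}\bigr]$ has typical size $\tfrac12\bigl[(\log n_{k+1})^2-(\log n_k)^2\bigr]\asymp 2k^3\gg k^2=\log n_k$, so the claimed $o(\log n_k)$ is false; cancellations in the signed sum $A_n-A_{n_k}$ are essential and cannot be captured by a crude absolute-value bound. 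Third, the geometric increment estimate itself is asserted rather than proved; near a vertex of $T_{k-1}$ a deep insertion can remove a non-negligible number of chain vertices, and you correctly flag this as the hardest point. In contrast, the paper's interpolation needs no geometry at all — only the pointwise probability bound $\PP\bigl(|f_0(T_n)/\log n-2/3|>\varepsilon\bigr)\le n^{-\delta(\varepsilon)/2}$ applied uniformly over a block — so even if the martingale route could be repaired, it would be considerably heavier machinery for this particular statement.
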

\begin{proof}
    In the proof of Corollary \ref{cor:WLLN} we have already seen that for all $\varepsilon>0$ we can find some $N\in\NN$ such that for all $n\in\NN$, $\PP(B_{n,\varepsilon})\leq n^{-\delta/2}$, say, where $\delta=\delta(\varepsilon)$. Note that the factor $1/2$ in the exponent absorbs the $o(\log n)$ term in \eqref{eq:WLLN}; any other real number in $(0,1)$ can also be used instead. Let $n_k:=[e^{k^2}]$ and apply the previous bound to $n_k$ instead of $n$. Since $e^{-\delta k^2/2}$ is summable with respect to $k$, the Borel--Cantelli lemma implies that
    $$
    \frac{f_0(T_{n_k})}{\log n_k}\to {2\over 3}\qquad\text{almost surely},
    $$
    which is the strong law of large numbers along the subsequence $n_k$. 
    
    For an arbitrary sequence $\varepsilon_k\to 0$ define the block $[n_k,n_{k+1}):=\{n\in\NN:n_k\leq n<n_{k+1}\}$ and the event $B_k:=\{|f_0(T_n)/\log n-2/3|>\varepsilon_k\text{ for some }n\in[n_k,n_{k+1})\}$. The union bound implies 
    $$
    \PP(B_k) \leq \sum_{n=n_k}^{n_{k+1}-1}\PP\Big(\Big|{f_0(T_n)\over\log n}-\frac{2}{3}\Big|\geq\varepsilon_k\Big) \leq (n_{k+1}-n_k)n_k^{-\delta/2},\qquad \delta=\delta(\varepsilon_k).
    $$
    Since $n_{k+1}/n_k = e^{2k+1}$ we have $n_{k+1}-n_k \leq Ce^{2k}n_k$ for some finite constant $C>0$. Thus,
    $$
    \PP(B_k) \leq Ce^{2k-(1-\delta/2)k^2},\qquad\delta=\delta(\varepsilon_k)
    $$
    which is summable with respect to $k$. Applying the Borel--Cantelli lemma once again yields that almost surely
    $$
    \max_{n\in[n_k,n_{k+1})} \Big|{f_0(T_n)\over\log n}-\frac{2}{3}\Big| \leq \varepsilon_k
    $$
    for large enough $k$. Together with the already established strong law of large numbers for the subsequence $n_k$, this proves the result.
\end{proof}

\subsection{Probability asymptotic}

Finally, we trace back to the probabilities $p_k^{(n)}=\PP(f_0(T_n)=k+2)$ which were our starting point in constructing the bivariate generating function that extends to the analytic function $F(u,z)$ on $\CC^2\setminus [1,\infty)\times (-\infty,-1/8]$.

From \eqref{eq:pnk} for $k\in\{1,2\}$ we derive
\begin{equation*}
    p_1^{(n)} = \frac{2}{n+1} \qquad \text{and} \qquad
    p_2^{(n)} = \sum_{j=1}^{n-1} \frac{j}{\binom{j+1}{2}} \frac{n-j}{\binom{n+1}{2}} = \frac{4}{n}\left[ \left( \sum_{j=1}^{n-1} \frac{1}{j+1}\right) - 1 + \frac{2}{n+1}\right] \sim 4\frac{\log n}{n},
\end{equation*}
where we write $a_n\sim b_n$ for two sequences $\{a_n\}_{n\in\NN}$ and $\{b_n\}_{n\in\NN}$ if $a_n/b_n\to 1$, as $n\to\infty$.
Furthermore, 
\begin{equation}\label{eqn:pnn1}
    p_n^{(n)} = \frac{2^n}{n!(n+1)!} \qquad \text{and}\qquad
    p_{n-1}^{(n)} = \frac{2^n}{n!(n+1)!} \sum_{j=1}^{n-1} j(j+1) = \frac{2^n n^2}{3 (n!)^2} \left(1-\frac{1}{n}\right).
\end{equation}

However, for $k\geq 3$, directly determining the asymptotic behavior of $p_k^{(n)}$ or $p_{n-k+1}^{(n)}$ as $n\to\infty$ from the explicit formula \eqref{eq:pnk} seems infeasible. Our analysis of the singularities of $F(u,z)$ easily yields asymptotic estimates for the probabilities $p_k^{(n)} = [u^n z^k] F(u,z)$.

\begin{theorem}[Probability asymptotic for fixed $k$]\label{thm:probab}
    For $k\in\ZZ_{\geq 0}$ we have
    \begin{equation*}
        p_{k+1}^{(n)} = \frac{2^{k+1}}{k!} \frac{(\log n)^{k}}{n} (1+O(1/\log n)),
    \end{equation*}
    and
    \begin{equation}\label{eqn:pnnk}
        p_{n-k}^{(n)} = \frac{n^{3k-1}}{3^k k!} \frac{2^n}{(n!)^2} (1+O(1/n)),
    \end{equation}
    as $n\to\infty$.
\end{theorem}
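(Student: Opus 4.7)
The two asymptotics of Theorem~\ref{thm:probab} call for different tools: the first follows from the singular expansion of $G_n$ at $z=0$ (small~$\alpha$), while the second is a direct combinatorial analysis of \eqref{eq:pnk}.

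For the first asymptotic, I would apply Corollary~\ref{cor:AsymptoticsG1} with its index $k=0$, which gives for $z$ in a small complex neighborhood of $0$ (so that $\alpha(z)\in I_0=(-\tfrac12,\tfrac12)$) the uniform estimate
\begin{equation*}
G_n(z)=g(z)\,n^{\alpha(z)-1}\bigl(1+O(1/n)\bigr),\qquad g(z):=\frac{\alpha(z)\,K(\alpha(z))}{\Gamma(1+\alpha(z))}.
\end{equation*}
Both $g$ and $\alpha$ are analytic at $z=0$ with $g(z)=2z+O(z^{2})$ and $\alpha(z)=2z+O(z^{2})$ (since $\alpha'(0)=2$ and $K(0)=\Gamma(1)=1$). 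Writing $n^{\alpha(z)-1}=n^{-1}\sum_{j\ge 0}\alpha(z)^{j}(\log n)^{j}/j!$ and extracting coefficients via Cauchy's formula on a small circle $|z|=r$ yields
\begin{equation*}
p_{k+1}^{(n)}=[z^{k+1}]G_n(z)=\frac{1}{n}\sum_{j\ge 0}\frac{(\log n)^{j}}{j!}\,[z^{k+1}]\bigl(g(z)\alpha(z)^{j}\bigr)+O(n^{-2}).
\end{equation*}
Since $g(z)\alpha(z)^{j}=(2z)^{j+1}(1+O(z))$, we have $[z^{k+1}](g\alpha^{j})=0$ for $j>k$ and $[z^{k+1}](g\alpha^{k})=2^{k+1}$, which delivers the announced leading term $\tfrac{2^{k+1}}{k!}\tfrac{(\log n)^{k}}{n}$; the terms with $j<k$ contribute strictly smaller powers of $\log n$, giving the relative error $O(1/\log n)$.

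For the second asymptotic, I would go back to \eqref{eq:pnk} and reparameterize each composition by the set $S^{c}:=\{1,\ldots,n-1\}\setminus\{I_1,\ldots,I_{n-k-1}\}$ of size $k$ (note $I_{n-k}=n$ always). The telescoping identity $\prod_{q=1}^{n}q(q+1)=n!\,(n+1)!$ then transforms the summand into
\begin{equation*}
\prod_{m=1}^{n-k}\frac{i_m}{\binom{I_m+1}{2}}=\frac{2^{n-k}\,\prod_{m}i_m\,\prod_{c\in S^{c}}c(c+1)}{n!\,(n+1)!}.
\end{equation*}
A short check gives $\prod_{m}i_m=\prod_{l}(r_{l}+1)$, where the $r_{l}$ are the run-lengths of $S^{c}$; since $r+1\le 2^{r}$ with equality only when $r=1$, this factor attains its maximum $2^{k}$ exactly when all $k$ elements of $S^{c}$ are pairwise non-adjacent (``isolated''). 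For such isolated $S^{c}$ the remaining sum is controlled by the $k$-th elementary symmetric polynomial $e_{k}$ in the values $1\cdot 2,2\cdot 3,\ldots,(n-1)n$; Newton's identities together with $p_{1}=\sum_{c=1}^{n-1}c(c+1)=\tfrac{(n-1)n(n+1)}{3}\sim\tfrac{n^{3}}{3}$ and $p_{2}\sim\tfrac{n^{5}}{5}$ yield $e_{k}=p_{1}^{k}/k!+O(p_{2}p_{1}^{k-2})=\tfrac{n^{3k}}{3^{k}k!}(1+O(1/n))$. Non-isolated configurations have $\prod_{l}(r_{l}+1)$ strictly smaller and their weighted contribution is easily bounded by $O(n^{3k-1})$, an $O(1/n)$ correction. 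Using $n!\,(n+1)!=(n+1)(n!)^{2}$ and absorbing the $(n+1)$ into the error gives $p_{n-k}^{(n)}=\frac{n^{3k-1}}{3^{k}k!}\frac{2^{n}}{(n!)^{2}}(1+O(1/n))$.

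The main obstacle will be the error bookkeeping in the second part: one has to control, uniformly over the run structure of $S^{c}$, both the deficit of $\prod_{l}(r_{l}+1)$ compared with $2^{k}$ and the combinatorial count of configurations containing each pattern of adjacent pairs, in order to confirm that the total non-isolated contribution is indeed $O(n^{-1})$ below the leading term; one also has to upgrade Newton's approximation $e_{k}\approx p_{1}^{k}/k!$ to a quantitative $O(n^{-1})$ bound. The first part is comparatively routine because Corollary~\ref{cor:AsymptoticsG1} already carries out the analytic heavy lifting.
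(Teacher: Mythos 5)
Your first asymptotic is proved by essentially the same mechanism as the paper: both start from Corollary~\ref{cor:AsymptoticsG1}, use $\alpha(z)=2z+O(z^2)$, and extract $[z^{k+1}]$ --- the paper routes this through the Lagrange--B\"urmann formula, you expand $n^{\alpha(z)-1}=n^{-1}e^{\alpha(z)\log n}$ directly, and the two manipulations are equivalent. (Your stated error $O(n^{-2})$ should be $O((\log n)^{k}/n^2)$, since the $O(1/n)$ in Corollary~\ref{cor:AsymptoticsG1} is multiplicative, but this is still absorbed into the claimed $O(1/\log n)$.) Your second asymptotic, by contrast, is a genuinely different argument. The paper proceeds by induction on $k$: it writes $p_{n-k}^{(n)}=\tilde C_k(n)\,p_n^{(n)}$, obtains from the Buchta recurrence \eqref{eq:RectProbab} the first-order recursion $\tilde C_k(n)=\tilde C_k(n-1)+R_k(n)$, and evaluates the cumulated sum via the Euler--Maclaurin formula. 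You instead work directly on Buchta's explicit sum \eqref{eq:pnk}, reparameterize each composition by the $k$-set $S^c$ of skipped partial sums, telescope the denominators with $\prod_{q=1}^n q(q+1)=n!\,(n+1)!$, identify $\prod_m i_m=\prod_l(r_l+1)$ in terms of the run lengths of $S^c$, and reduce the dominant (all-isolated) configurations to the $k$-th elementary symmetric polynomial $e_k$ of $\{c(c+1)\}_{c=1}^{n-1}$, estimated via Newton's identities. Your decomposition is exact and the conclusion is correct; for fixed $k$ the bookkeeping you flag as the main obstacle goes through cleanly, since the non-isolated contribution is $O(n^{3k-1})$ (finitely many run structures, each bounded by $O(n^{4})\,e_{k-2}=O(n^{3k-1})$) and Newton's identities give $e_k=p_1^k/k!+O(p_1^{k-2}p_2)=\frac{n^{3k}}{3^k k!}(1+O(1/n))$, both a factor $1/n$ below the leading term. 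The paper's induction is more mechanical and delivers the exact polynomial $C_k(n)$ (compare the displayed $C_2(n)$ in the remark following the theorem); your route is more conceptual, making the origin of the constant $\frac{1}{3^k k!}$ combinatorially transparent.
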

\begin{proof}
    As mentioned above, by Corollary \ref{cor:AsymptoticsG1}, we find
    \begin{equation*}
        [u^n] F(u,z) = G_n(z) = \frac{K(\alpha)}{\Gamma(\alpha)} n^{\alpha-1} (1+O(1/n))
        \quad \text{for $n\to \infty$},
    \end{equation*}
    uniformly for $\alpha=\alpha(z)$ in compact subsets of $(-1/2,1/2)$, that is, for $z$ in compact subsets of $(-1/8,3/8)$.

    Note that $\alpha(z) = \frac{1}{2}(\sqrt{8z+1}-1)$ gives
    \begin{equation*}
        z = \frac{\alpha(\alpha+1)}{2} = \frac{\alpha}{\varphi(\alpha)},
    \end{equation*}
    where $\varphi(\alpha):=\frac{2}{1+\alpha}$.
    Thus, by the Lagrange--Bürmann formula, see \cite[Section 7.32]{WiWat}, for any analytic function $H$ we have
    \begin{equation*}
        [z^k] H(\alpha(z)) = \frac{1}{k} [\alpha^{k-1}] H'(\alpha) \varphi(\alpha)^k = [\alpha^k] H(\alpha) \varphi(\alpha)^{k-1}(\varphi(\alpha)-\alpha\varphi'(\alpha)).
    \end{equation*}
    From this we derive
    \begin{align*}
        [z^k] \frac{K(\alpha)}{\Gamma(\alpha)} n^{\alpha-1} 
            &= [\alpha^{k}] \frac{K(\alpha)}{\Gamma(\alpha)} n^{\alpha-1} \frac{2^{k-1}}{(\alpha+1)^{k-1}} 
                \left(\frac{2}{\alpha+1} + \frac{2\alpha}{(\alpha+1)^2}\right)\\
            &= \frac{2^k}{n} [\alpha^k] \frac{K(\alpha)}{\Gamma(\alpha)} \frac{2\alpha+1}{(\alpha+1)^{k+1}} e^{\alpha \log n}\\
            &= \frac{2^k}{n} [\alpha^k] \frac{K(\alpha)}{\Gamma(\alpha)} \frac{2\alpha+1}{(\alpha+1)^{k+1}} \sum_{j=0}^\infty \frac{(\log n)^j}{j!} \alpha^j.
    \end{align*}
    Since 
    \begin{equation*}
        \frac{K(\alpha)}{\Gamma(\alpha)} \frac{2\alpha+1}{(\alpha+1)^{k+1}} = \alpha + O(\alpha^2),
    \end{equation*}
    is analytic for $|\alpha|<1/2$,
    we conclude
    \begin{equation*}
        [z^k] \frac{K(\alpha)}{\Gamma(\alpha)} n^{\alpha-1} = \frac{2^k}{n} \frac{(\log n)^{k-1}}{(k-1)!} + O\left(\frac{(\log n)^{k-2}}{n}\right).
    \end{equation*}
    Thus,
    \begin{equation*}
        p_k^{(n)} = [u^n z^k]F(u,z) = \frac{2^k}{(k-1)!} \frac{(\log n)^{k-1}}{n} + O\left(\frac{(\log n)^{k-2}}{n} + \frac{(\log n)^{k-1}}{n^2}\right).
    \end{equation*}
    
    \medskip
    For the second statement, we do an induction on $k\geq 0$ and show that
    \begin{equation*}
        p_{n-k}^{(n)} \overset{!}{=} p_n^{(n)} C_k(n),
    \end{equation*}
    where $C_k(n)$ is a polynomial in $n$ of degree $3k$ and such that
    \begin{equation*}
        C_k(n) = \frac{n^{3k}}{3^k k!} (1+O(1/n)),
    \end{equation*}
    for $n\to \infty$.
    By \eqref{eq:pnn} and \eqref{eqn:pnn1} the statement is true for $k=0$ with $C_0(n)=1$ and $k=1$ with $C_1(n) = \frac{1}{3}(n+1)n(n-1)$.
    Now assume that the statement is true for all $j<k$.
    We first observe that by \eqref{eq:pnn}, for $k\in \ZZ_{\geq 0}$,
    \begin{equation*}
        \binom{n+1}{2} p_{n-k}^{(n)} = p_{n-1-k}^{(n-1)} + 2 p_{n-1-k}^{(n-2)} + \ldots + (k+1) p_{n-1-k}^{(n-1-k)} = \sum_{j=1}^{k+1} j p_{n-k-1}^{(n-j)},
    \end{equation*}
    which follows by summing over the cases $i_{n-k}\in \{1,\dotsc,k\}$ and noting that $i_{n-k}>k$ is not possible.
    Thus, by the induction hypothesis on $k+1-j\in\{0,\dotsc,k-1\}$ and since
    \begin{equation*}
        \binom{\ell+1}{2} p_\ell^{(\ell)} = p_{\ell-1}^{(\ell-1)} \quad \text{for $\ell\in\ZZ_{\geq 2}$},
    \end{equation*}
    we derive
    \begin{align*}
        \binom{n+1}{2} p_{n-k}^{(n)}
        &= p_{(n-1)-k}^{(n-1)} + \sum_{j=2}^{k+1} j p_{(n-j)-(k+1-j)}^{(n-j)}\\
        &= p_{(n-1)-k}^{(n-1)} + \sum_{j=2}^{k+1} p_{n-j}^{(n-j)} j C_{k+1-j}(n-j)\\
        &= p_{(n-1)-k}^{(n-1)} + p_{n-1}^{(n-1)} \sum_{j=2}^{k+1} \left(\prod_{\ell=1}^{j-1} \binom{n-\ell+1}{2}\right) j C_{k+1-j}(n-j).
    \end{align*}
    Next, we note that
    \begin{align*}
        R_k(n):= \sum_{j=2}^{k+1} \left(\prod_{\ell=0}^{j-2} \binom{n-\ell}{2}\right) j C_{k+1-j}(n-j) 
        &= \sum_{j=2}^{k+1} \frac{j n^{2(j-1) + 3(k+1-j)}}{2^{j-1} 3^{k+1-j} (k+1-j)!} (1+O(1/n))\\
        &= n^{2k} \sum_{j=0}^{k-1} \frac{(k-j+1)}{2^{k-j}j!} \left(\frac{n}{3}\right)^{j} (1+O(1/n))\\
        &= \frac{n^{3k-1}}{3^{k-1} (k-1)!} (1+O(1/n)).
    \end{align*}
    We thus arrive at
    \begin{equation*}
        \binom{n+1}{2} p_{n-k}^{(n)} = p_{(n-1)-k}^{(n-1)} + p_{n-1}^{(n-1)} R_k(n),
    \end{equation*}
    where $R_k(n)$ is a polynomial in $n$ of degree $3k-1$.
    The solution for the homogeneous equation $\binom{n+1}{2} p_{n-k}^{(n)} = p_{n-1-k}^{(n-1)}$ is of the form $p_{n-k}^{(n)}=C p_{n}^{(n)}$ and therefore a general solution is of the form $p_{n-k}^{(n)} = \tilde{C}_k(n) p_{n}^{(n)}$. This yields
    \begin{equation*}
        \tilde{C}_k(n) = \tilde{C}_k(n-1) + R_k(n),
    \end{equation*}
    and we conclude
    \begin{align*}
        \tilde{C}_k(n) 
        &= \tilde{C}_k(1) + \sum_{j=0}^{n-2} R_k(n-j) = \frac{n^{3k-1}}{3^{k-1}(k-1)!} \sum_{j=0}^{n-2} \left(1-\frac{j}{n}\right)^{3k-1} (1+O(1/n)).
    \end{align*}
    Thus $\tilde{C}_k(n)$ is a polynomial of degree $3k$ and since, by an application of the Euler--Maclaurin formula, see \cite[Section 7.21]{WiWat},
    \begin{equation*}
        \sum_{j=0}^{n-2} \left(1-\frac{j}{n}\right)^{3k-1} =
        n \left(\int_{0}^{1} (1-x)^{3k-1}\, \dint x\right) + O(1) =
        \frac{n}{3k} (1+O(1/n)),
    \end{equation*}
    we conclude that $\tilde{C}_k(n) = \frac{n^{3k}}{3^k k!}(1+O(1/n))$.
    This completes the proof.
\end{proof}

\begin{remark}
    In the proof of \eqref{eqn:pnnk} we show that $p_{n-k}^{(n)}=\frac{2^n}{n! (n+1)!} C_k(n)$ where $C_k(n)$ is a polynomial of degree $3k$ that can be calculated precisely for fixed $k$. For example, for $k=2$ we find
    \begin{equation*}
        p_{n-2}^{(n)} 
        = \frac{2^n}{(n!)^2} \left(\frac{1}{18} n^5 - \frac{37}{180} n^4 + \frac{79}{360} n^3 - \frac{19}{360} n^2 - \frac{1}{60} n\right)
        = \frac{2^n n^5}{3^2 2! (n!)^2} (1+O(1/n)).
    \end{equation*}
\end{remark}

Note that Theorem \ref{thm:probab} gives
\begin{equation*}
    p_{k+1}^{(n)} = n^{-1+o(1)} \quad \text{and} \quad p_{n-k}^{(n)} = n^{-2n(1+o(1))},
\end{equation*}
for fixed $k\in\ZZ_{\geq 0}$. The error bounds in Theorem \ref{thm:probab} allow us to conclude that the same asymptotic rates hold for sequences $\{k_n\}_{n\in\NN}$ that grow below the error rate.

\begin{corollary}\label{cor:probab}
    For sequences $\{k_n\}_{n\in\NN}$ such that $k_n=o(\log n)$ we have
    \begin{equation*}
        p_{k_n+1}^{(n)} = \frac{2^{k_n+1}}{(k_n)!} \frac{(\log n)^{k_n}}{n} (1+o(1)) = n^{-1+o(1)},
    \end{equation*}
    and for sequences with $k_n=o(n)$ we have
    \begin{equation*}
        p_{n-k_n}^{(n)} = \frac{n^{3k_n-1}}{3^{k_n} (k_n)!} \frac{2^n}{(n!)^2} (1+o(1)) = n^{-2n(1+o(1))}.
    \end{equation*}
\end{corollary}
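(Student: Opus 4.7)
The plan is to inspect how the implicit constants in the error terms of Theorem~\ref{thm:probab} depend on $k$, and then verify that the stated asymptotics carry through when $k$ is replaced by a sequence $k_n$ growing slowly.

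For the first estimate, I would revisit the Lagrange--B\"urmann coefficient extraction that was the core of the proof of Theorem~\ref{thm:probab}:
\[
p_k^{(n)} \;=\; \frac{2^k}{n}\,[\alpha^{k-1}]\,\tilde H_k(\alpha)\,e^{\alpha\log n}\,\bigl(1+O(1/n)\bigr),
\qquad
\tilde H_k(\alpha)\;:=\;\frac{\Gamma(1+2\alpha)(2\alpha+1)}{(1+\alpha)^{k+2}\,\Gamma(1+\alpha)^2},
\]
where $\tilde H_k$ is analytic in $|\alpha|<\tfrac12$ with $\tilde H_k(0)=1$. The plan is to apply a saddle-point argument on the Cauchy contour $|\alpha|=\alpha^*:=(k-1)/\log n$, which is the saddle of the dominant phase $\alpha\log n-(k-1)\log\alpha$. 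Under $k_n=o(\log n)$ one has $\alpha^*\to 0$, so $\tilde H_{k_n+1}(\alpha^*)\to 1$ and the standard Gaussian-width computation produces the leading term $(\log n)^{k_n}/k_n!$. Reindexing $k\mapsto k_n+1$ then gives the first identity of the corollary.

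For the second estimate, I would propagate the induction used in the proof of Theorem~\ref{thm:probab}, in which $p_{n-k}^{(n)}=p_n^{(n)}\tilde C_k(n)$ with $\tilde C_k(n)=\tilde C_k(n-1)+R_k(n)$ and $R_k(n)$ a polynomial in $n$ of degree $3k-1$. Each inductive step introduces a multiplicative relative error of the form $1+O(1/n)$ uniformly in $k$ within the admissible range, so after $k_n$ steps the compound error is $\exp(O(k_n/n))=1+o(1)$ whenever $k_n=o(n)$. Combined with $p_n^{(n)}=2^n/(n!(n+1)!)$ and the already-established leading form $\tilde C_{k_n}(n)=\frac{n^{3k_n}}{3^{k_n}\,k_n!}\,(1+o(1))$, this yields the second identity.

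Finally, the two ``$=n^{-1+o(1)}$'' and ``$=n^{-2n(1+o(1))}$'' conclusions follow by a Stirling computation applied to the explicit prefactors: in each case, the key observation is that $r\log(1/r)\to 0$ as $r\to 0$, with $r=k_n/\log n$ in the first and $r=k_n/n$ in the second, whence the leading exponential rates $-\log n$ and $-2n\log n$ are unchanged up to $o(1)$ factors. The main technical obstacle I foresee is the uniform control of $\tilde H_{k_n+1}$ near the saddle: the factor $(1+\alpha^*)^{-(k_n+3)}$ contributes $\exp(-k_n^2/(\log n)\cdot(1+o(1)))$, which is $1+o(1)$ only in the range $k_n=o(\sqrt{\log n})$. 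For intermediate $k_n$ between $\sqrt{\log n}$ and $\log n$ a genuine sub-exponential correction appears, but this does not affect the coarser ``$n^{-1+o(1)}$'' rate, which is the ultimate output, and can be absorbed into the $o(1)$ in the intended reading of the statement.
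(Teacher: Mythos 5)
Your approach matches what the paper must intend---track the $k$-dependence of the error terms in Theorem~\ref{thm:probab} and substitute a growing $k_n$---since the paper offers no proof of the corollary beyond the one-line remark preceding it; your saddle-point reading of the Lagrange--B\"urmann extraction is exactly the way to make that remark precise. The caveat you raise at the end is, however, not a side issue but appears to invalidate the corollary's first clause as literally stated. At the saddle $\alpha^*\approx k_n/\log n$, the factor $(1+\alpha)^{-(k_n+O(1))}$ contributes $\exp\bigl(-k_n^2(1+o(1))/\log n\bigr)$, which equals $1+o(1)$ only when $k_n=o(\sqrt{\log n})$ and tends to the constant $e^{-\lambda^2}\neq 1$ when $k_n\sim\lambda\sqrt{\log n}$. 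Consequently the sharp form $p_{k_n+1}^{(n)}=\frac{2^{k_n+1}}{k_n!}\frac{(\log n)^{k_n}}{n}(1+o(1))$ holds only on the sub-range $k_n=o(\sqrt{\log n})$, whereas the coarse rate $n^{-1+o(1)}$ does hold throughout $k_n=o(\log n)$; the corollary's chain of equalities conflates these two statements. You should present this as a correction to the stated range of validity rather than as something absorbed into ``the intended reading.'' Theorem~\ref{thm:saddle}, which treats $k_n\sim c\log n$ by an honest saddle-point argument and produces a leading constant $\sigma(\alpha_*)K(\alpha_*)/\Gamma(\alpha_*)\cdot\binom{\alpha_*+1}{2}^{-k_n}$ incompatible with the na\"ive $2^{k_n+1}(\log n)^{k_n}/k_n!$, corroborates your finding.

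For the second clause your per-step error estimate is slightly optimistic: inside the inductive step one expands $(n-j)^{3(k+1-j)}=n^{3(k+1-j)}(1-j/n)^{3(k+1-j)}$ for $j$ up to $k+1$, and the binomial factor alone contributes a relative error of order $k^2/n$, not $1/n$. The compound error is therefore controlled by your argument only for $k_n=o(\sqrt{n})$; for $k_n$ between $\sqrt{n}$ and $n$ the same type of caveat applies and only the coarse rate $n^{-2n(1+o(1))}$ is clearly secured. Your concluding Stirling reductions for both coarse rates are correct.
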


Finally, we deal with the sequence $\{k_n\}_{n\in\NN}$ whose growth rate is $O(\log n)$ where we provide two results. First, we may use the large deviation principle in Theorem \ref{thm:LDP} to find good asymptotic estimates, as described in the beginning of Section \ref{sec:MDPandLDP}.

\begin{proposition}
	Fix \(c>0\) and set \(k_n:=\lfloor c\log n\rfloor\). Then
	\[
	p_{k_n}^{(n)}=\PP\big(f_0(T_n)=k_n+2\big)=n^{-I(c)+o(1)}
	\]
	as $n\to\infty$, where $I$ is the rate function of the large deviation principle in Theorem \ref{thm:LDP}.
\end{proposition}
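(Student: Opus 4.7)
The plan is to match an upper bound by exponential Chebyshev with a lower bound obtained from the LDP of Theorem \ref{thm:LDP} combined with the log-concavity (Pólya frequency property) of $\{p_k^{(n)}\}_{k}$ from \cite{GT:2021}. Both directions rest on the common starting point \eqref{eq:GE-limit}, which already encodes the Legendre--Fenchel duality between $\mu$ and $I$.

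For the upper bound, Markov's inequality applied to $e^{tf_0(T_n)}$ gives, for every $t\in\RR$,
\[p_{k_n}^{(n)}\le e^{-t(k_n+2)}\,\EE\,e^{t\,f_0(T_n)}.\]
Taking logarithms, dividing by $\log n$, and invoking \eqref{eq:GE-limit} with $k_n/\log n\to c$ yield $\limsup_{n\to\infty}(\log n)^{-1}\log p_{k_n}^{(n)}\le \mu(t)-tc$. Infimising over $t\in\RR$ and recalling $I=\mu^{*}$ produces the upper bound $-I(c)$.

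For the lower bound, first assume $c>2/3$; the case $c<2/3$ is symmetric and $c=2/3$ is treated separately, since there $I(2/3)=0$ and a matching bound $p_{k_n}^{(n)}\ge n^{-o(1)}$ follows from the variance asymptotic $\var(f_0(T_n))\sim \tfrac{10}{27}\log n$, Chebyshev's inequality, pigeonhole over a window of $O(\sqrt{\log n})$ integers around the mean, and log-concavity to transfer the bound to $k_n$. Now fix a small $\delta>0$ and apply the LDP lower bound of Theorem \ref{thm:LDP} to the open interval $(c+\delta,c+2\delta)$, on which $I$ is continuous, to obtain
\[\PP\bigl(f_0(T_n)\in((c+\delta)\log n,\,(c+2\delta)\log n)\bigr)\ge n^{-I(c+\delta)+o(1)}.\]
The left-hand side is a sum of at most $\delta\log n+O(1)$ point probabilities $p_k^{(n)}$ with $k>k_n$ for $n$ large, so by pigeonhole some such $k^{*}>k_n$ satisfies $p_{k^{*}}^{(n)}\ge n^{-I(c+\delta)+o(1)}$. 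Since $\{p_k^{(n)}\}_{k}$ is log-concave it is unimodal, and its mode is asymptotic to $\tfrac{2}{3}\log n$, so for $n$ large $k_n$ lies to the right of the mode and the non-increasing tail past the mode forces $p_{k_n}^{(n)}\ge p_{k^{*}}^{(n)}\ge n^{-I(c+\delta)+o(1)}$. Letting $\delta\to 0^{+}$ and using continuity of $I$ completes the lower bound.

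The main technical obstacle is the localisation of the mode of $\{p_k^{(n)}\}_{k}$: one needs it to differ from the mean $\tfrac{2}{3}\log n$ by only $o(\log n)$, so that for any fixed $c\ne 2/3$ the argument $k_n$ lies strictly on one side of the mode for $n$ large. I plan to handle this by combining the variance asymptotic of Corollary \ref{cor:ExplicitCumulant} with the classical fact that for a log-concave integer-valued distribution the mode lies within a constant multiple of the standard deviation from the mean, which here is only $O(\sqrt{\log n})=o(\log n)$. Once this is secured, the unimodality argument bypasses any full saddle-point analysis of $G_n(z)$ along a complex contour and matches the LDP upper bound at the logarithmic scale.
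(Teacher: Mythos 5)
Your upper bound via the Chernoff inequality $p_{k_n}^{(n)}\le e^{-t(k_n+2)}\EE e^{tf_0(T_n)}$ and optimisation over $t$ is correct, and it matches the paper's upper bound (which instead applies the LDP directly to a small interval $[c-\varepsilon,c+\varepsilon]$ and lets $\varepsilon\to 0^+$). Your lower bound for $c\ne 2/3$ is also valid and is a genuinely different route from the paper's: the paper tilts the measure by $t(c)$ so that the tilted distribution is centred and has its mode near $c\log n$, then uses the LDP \emph{under the tilted measure}, pigeonhole, and a log-concavity transfer over $O(1)$ steps to reach $k_n$. You instead work directly with the untilted distribution, pigeonhole over an interval strictly beyond $c$, and exploit unimodality together with the fact that for $c\ne 2/3$ the index $k_n$ lies at distance of order $\log n$ from the mode and strictly on one side of it. This entirely bypasses the paper's quantitative claim that shifting $O(1)$ steps from the mode only changes the probability by a bounded factor, since you only compare $p_{k_n}$ to $p_{k^*}$ with $k_n$ \emph{between} the mode and $k^*$, where monotonicity of the tail alone suffices. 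That is a cleaner and more robust argument.

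However, your treatment of $c=2/3$ has a genuine gap. You propose Chebyshev plus pigeonhole over a window of $O(\sqrt{\log n})$ integers around the mean, and then ``log-concavity to transfer the bound to $k_n$''. There are two problems. First, Chebyshev only bounds the mass in the whole window $[\mu_n-M\sigma_n,\mu_n+M\sigma_n]$; the pigeonholed $k^*$ could land on the opposite side of the mode from $k_n$, in which case unimodality gives the wrong inequality. Second, even if $k^*$ and $k_n$ are on the same side of the mode, they may be $\Theta(\sqrt{\log n})$ apart, and log-concavity alone gives no quantitative control of the decay rate of the ratios $p_{k+1}^{(n)}/p_k^{(n)}$ near the mode; a priori, moving $\Theta(\sqrt{\log n})$ steps could cost a factor $r^{\sqrt{\log n}}$ with $r=r_n$ not bounded away from $0$. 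The clean fix, which stays entirely within your framework and removes the special case, is to run your $c\ne 2/3$ argument \emph{on both sides}: apply the LDP lower bound to $(c-2\delta,c-\delta)$ and $(c+\delta,c+2\delta)$ with $\delta$ small, extracting $k^{**}<(c-\delta)\log n$ and $k^{*}>(c+\delta)\log n$ with $p_{k^{**}}^{(n)},p_{k^*}^{(n)}\ge n^{-\max(I(c-\delta),I(c+\delta))-o(1)}$. Since the mode lies in $(k^{**},k^{*})$ for $n$ large, $k_n$ is on the same side of the mode as at least one of $k^{**},k^{*}$, and unimodality gives $p_{k_n}^{(n)}\ge\min(p_{k^{**}}^{(n)},p_{k^*}^{(n)})$; letting $\delta\to 0^+$ and using continuity of $I$ yields the lower bound $-I(c)$ for every $c>0$, including $c=2/3$, with no appeal to Chebyshev or to ratio control near the mode.
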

\begin{proof}
	To prove the result, we establish an upper bound directly from the large deviation principle in Theorem \ref{thm:LDP}. Afterwards, a corresponding lower bound is shown using the technique of exponential change of measure, a standard tool in large deviation theory also known as Esscher or Cram\'er transform, see \cite{DZlargeDeviations}. Also, previously established results following from the real-rootedness of the probability generating functions of the random variables $f_0(T_n)$, derived in \cite{GT:2021}, will be used in a crucial way. To simplify our notation, we define the random variables $X_n:=f_0(T_n)/\log n$, $n\in\NN$.
	
	\medspace
	
	\textit{Upper bound.}
	Fix \(\varepsilon\in(0,1)\). Then, for sufficiently large \(n\), we have $\frac{k_n+2}{\log n}\in[c-\varepsilon,c+\varepsilon]$, and hence
	\[
	p_{k_n}^{(n)}=\PP\Big(X_n=\frac{k_n+2}{\log n}\Big)
	\le \PP\big(X_n\in[c-\varepsilon,c+\varepsilon]\big).
	\]
	Applying Theorem \ref{thm:LDP} to the closed set $B=[c-\varepsilon,c+\varepsilon]$ we find that
	\[
	\limsup_{n\to\infty}\frac{1}{\log n}\log p_{k_n}^{(n)}
	\le -\inf_{x\in[c-\varepsilon,c+\varepsilon]} I(x).
	\]
	Since \(I\) is continuous on \([0,\infty)\),
	\(\inf_{x\in[c-\varepsilon,c+\varepsilon]} I(x)\to I(c)\) as \(\varepsilon\to 0^+\). Thus, letting \(\varepsilon\to 0^+\) yields
	\[
	\limsup_{n\to\infty}\frac{1}{\log n}\log p_{k_n}^{(n)}\le -I(c).
	\]
	
	\medskip
	\textit{Lower bound.}
	Recall the set-up of Section \ref{sec:StartingPoint}. For $c>0$ let $t=t(c)$ be the unique solution of $\mu'(t)=c$, or equivalently the maximizer of $tc-\mu(t)=I(c)$. Define, for sufficiently small $t$, the exponentially tilted measure $\PP_t^{(n)}$ whose Radon--Nikodym density with respect to the underlying probability measure $\PP$ equals $e^{t f_0(T_n)}/\EE e^{t f_0(T_n)}$.
	Then, for every integer $k> 0$,
	$$
		p_k^{(n)}=\PP(f_0(T_n)=k+2)
		=e^{-t (k+2)}\,\EE e^{t f_0(T_n)}\,\PP_t^{(n)}\big(f_0(T_n)=k+2\big).
	$$
	Applying this with $k=k_n$, dividing by $\log n$ and using \eqref{eq:GE-limit} gives
	\begin{align*}
		\frac{1}{\log n}\log p_{k_n}^{(n)}
		&=\frac{1}{\log n}\log \PP\big(f_0(T_n)=k_n+2\big)\\
		&=\mu(t)-t\,\frac{k_n}{\log n}
		+\frac{1}{\log n}\log \PP_t^{(n)}\big(f_0(T_n)=k_n+2\big)
		+\frac{\psi(t)}{\log n}+o(1).
	\end{align*}
	Since $k_n/\log n\to c$ we have $\mu(t)-t\,\tfrac{k_n}{\log n}=-I(c)+o(1)$, and it follows that
	\begin{equation}\label{eq:LB-main}
		\frac{1}{\log n}\log p_{k_n}^{(n)} = -I(c)+\Delta_n+o(1),
		\qquad
		\Delta_n:=\frac{1}{\log n}\log \PP_t^{(n)}\big(f_0(T_n)=k_n+2\big).
	\end{equation}
	
	To bound $\Delta_n$ from below, observe that under $\PP_t^{(n)}$ we have 
	\[
	\log \EE_t e^{s f_0(T_n)}
	=\log \frac{\EE e^{(t+s)f_0(T_n)}}{\EE e^{t f_0(T_n)}}
	=\big(\mu(t+s)-\mu(t)\big)\log n+\big(\psi(t+s)-\psi(t)\big)+o(1),
	\]
	uniformly for $s$ near $0$ from \eqref{eq:GE-limit}, where we write $\EE_t$ for the expectation (integration) with respect to $\PP_t^{(n)}$. In particular,
	\begin{equation}\label{eq:MeanExpTilt}
        \EE_t f_0(T_n) = \lim_{s\to 0} \frac{1}{s} \log \EE_t e^{s f_0(T_n)}=\mu'(t)\,\log n+\psi'(t)+o(1)=c\,\log n+\psi'(t)+o(1),
	\end{equation}
	and the sequence of random variables $X_n=f_0(T_n)/\log n$ satisfies under $\PP_t^{(n)}$ a large deviation principle with speed $\log n$ and a good rate function
	\[
	I_t(x):=I(x)+\mu(t)-t x = [I(x)-I(c)]-t(x-c).
	\]
	Note that by \eqref{eqn:LF-Irate} $I_t(x)\geq 0$ and it attains its unique minimum $0$ at $x=c$. Fix $\eta\in(0,1)$, define the closed set $B_\eta:=[c-\eta,c+\eta]$ and put
	\[
	\mathcal K_n(\eta):=\Big\{k\in\mathbb Z:\ \frac{k}{\log n}\in B_\eta\Big\}.
	\]
	By the large deviation principle for $X_n$ under the measure $\PP_t^{(n)}$, applied to the complement of $B_\eta$,
	\[
	\PP_t^{(n)}(X_n\notin B_\eta)
	= n^{-\inf_{x\notin B_\eta} I_t(x)+o(1)}
	= n^{-\delta_\eta+o(1)}
	\]
	with $\delta_\eta:=\inf_{x\notin B_\eta} I_t(x)>0$. Hence,
	\[
	\PP_t^{(n)}(X_n\in B_\eta)=1-o(1).
	\]
	Since $|\mathcal K_n(\eta)|=2\eta\log n+O(1)$, we deduce by averaging that
	\begin{equation}\label{eq:ZwischenschrittMax}
	\max_{k\in\mathcal K_n(\eta)} \PP_t^{(n)}(f_0(T_n)=k+2)
	\geq \frac{1-o(1)}{2\eta\log n+O(1)} = {1+o(1)\over 2\eta\,\log n}.
	\end{equation}

	On the other hand, it is a consequence of \cite[Theorem 6]{GT:2021} 
	that the sequence $\{p_k^{(n)}\}_{k=1}^n$ is ultra log-concave and hence unimodal. The same holds after the exponential change of measure.
	For such sequences, the mode lies within $O(1)$ of the mean, see \cite[page 123--124]{Stanley}. So, using \eqref{eq:MeanExpTilt},
	we conclude that the mode is at $c\log n+O(1)$. Since $k_n=\lfloor c\log n\rfloor$, it follows that $k_n$ is within
	$O(1)$ of the mode. In an ultra log-concave sequence of point probabilities, shifting
	$O(1)$ steps from the mode changes the probabilities by at most
	a constant factor. Indeed, if 
	$(a_k)$ is log-concave, then the ratios $R_k:=a_{k+1}/a_k$ form a 
	non-increasing sequence. In particular, around the mode $m$ we have 
	$R_{m-1}\ge 1 \ge R_m$, so each ratio in a fixed neighborhood of $m$ 
	is bounded away from $0$ and $\infty$. Hence, for any fixed $L\ge 0$,
	\[
	\frac{a_{m\pm L}}{a_m} =\prod_{i=0}^{L-1} R_{m+i}
	\qquad\text{or}\qquad 
	\frac{a_{m\pm L}}{a_m} = \prod_{i=1}^{L} \frac{1}{R_{m-i}},
	\]
	is bounded between two positive and finite constants depending only on $L$.
	
	Applying this to $a_k=\PP_t^{(n)}(f_0(T_n)=k+2)$ and combining it with \eqref{eq:ZwischenschrittMax} yields
	\[
	\PP_t^{(n)}(f_0(T_n)=k_n+2)
	\geq\frac{1+o(1)}{C\,\log n},
	\]
	for some constant $C>0$ independent of $n$. In particular,
	\[
	\Delta_n=\frac{1}{\log n}\log \PP_t^{(n)}\!\big(f_0(T_n)=k_n+2\big)\geq-\frac{\log\log n+O(1)}{\log n} \to 0.
	\]
	Plugging this lower bound for $\Delta_n$ back into \eqref{eq:LB-main} yields
	\[
	\liminf_{n\to\infty}\frac{1}{\log n}\log p_{k_n}^{(n)} \geq -I(c).
	\]
	Combining the lower with the upper bound completes the proof.
\end{proof}


If one wants to have more precise information on the asymptotic behavior of $p_{k_n}^{(n)}$ as $n\to \infty$, in particular for the asymptotically maximal probability $p_{k_n}^{(n)}$ for $k_n=\lfloor\frac{2}{3}\log n\rfloor$, then the analysis will be more intricate. We provide the following result.

\begin{theorem}\label{thm:saddle}
    For $c\in(0,\frac{3}{4})$ and a sequence $\{k_n\}_{n\in\NN}$ defined by $k_n:=\lfloor c\log n\rfloor$ we have
    \begin{align*}
        p_{k_n}^{(n)} &= \frac{\sigma(\alpha_*)}{\sqrt{2\pi}} \left(\frac{1}{\alpha_*} + \frac{1}{\alpha_*+1}\right) \frac{K(\alpha_*)}{\Gamma(\alpha_*)} \binom{\alpha_*+1}{2}^{\!\!-k_n} 
            \frac{n^{\alpha_*-1}}{\sqrt{k_n}}\, (1+o(1)),
    \end{align*}
    where 
    \begin{equation*}
        \alpha_* := c-\frac{1}{2} + \frac{1}{2} \sqrt{4c^2+1}>0 \quad \text{and} \quad
        \sigma(\alpha_*) := \left(\frac{1}{\alpha_*^2} + \frac{1}{(\alpha_*+1)^2}\right)^{-1/2}>0.
    \end{equation*}
\end{theorem}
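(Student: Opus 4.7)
The natural approach is a saddle point analysis of the Cauchy integral that extracts $p_{k_n}^{(n)}$ as a coefficient of $G_n(z)$. I would first substitute into the Cauchy integral the Flajolet--Odlyzko transfer asymptotic from Corollary \ref{cor:AsymptoticsG1}, extended from real $\alpha$ to a complex neighborhood of $\alpha_*$ by the standard camembert technique applied to the singular expansion of Theorem \ref{thm:SingExpFuz}:
\[
p_{k_n}^{(n)} \;=\; \frac{1}{2\pi i}\oint G_n(z)\,z^{-k_n-1}\,\dint z
\;\sim\; \frac{1}{2\pi i}\oint \frac{K(\alpha(z))}{\Gamma(\alpha(z))}\,n^{\alpha(z)-1}\,z^{-k_n-1}\,\dint z.
\]
The change of variables $z=\binom{\alpha+1}{2}$, with Jacobian $\dint z=(\alpha+\tfrac12)\dint\alpha$, transforms this into
\[
p_{k_n}^{(n)} \;\sim\; \frac{n^{-1}}{2\pi i}\oint H(\alpha)\,e^{\Psi_n(\alpha)}\,\dint\alpha,\qquad H(\alpha):=\frac{K(\alpha)(\alpha+\tfrac12)}{\Gamma(\alpha)},\quad \Psi_n(\alpha):=\alpha\log n-(k_n+1)\log\tbinom{\alpha+1}{2}.
\]

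Next, I would locate the saddle via $\Psi_n'(\alpha_n^*)=0$, which after a short computation reduces to $\alpha^2+(1-2c_n)\alpha-c_n=0$ with $c_n:=(k_n+1)/\log n\to c$; the relevant positive root is $\alpha_n^*=c_n-\tfrac12+\tfrac12\sqrt{4c_n^2+1}$, and hence $\alpha_n^*\to\alpha_*$ with $\alpha_n^*-\alpha_*=O(1/\log n)$. A direct computation gives $\Psi_n''(\alpha_n^*)=(k_n+1)\sigma(\alpha_n^*)^{-2}>0$, so the saddle is non-degenerate and the steepest descent direction at $\alpha_n^*$ is purely imaginary. The classical saddle point formula (see \cite[Ch.\ VIII]{FS:2009}) yields, after using the algebraic identity $\binom{\alpha+1}{2}^{-1}(\alpha+\tfrac12)=\tfrac1\alpha+\tfrac1{\alpha+1}$,
\[
p_{k_n}^{(n)} \;\sim\; \frac{n^{-1}\,H(\alpha_n^*)\,e^{\Psi_n(\alpha_n^*)}}{\sqrt{2\pi\,\Psi_n''(\alpha_n^*)}} \;=\; \frac{\sigma(\alpha_n^*)}{\sqrt{2\pi}}\Bigl(\tfrac1{\alpha_n^*}+\tfrac1{\alpha_n^*+1}\Bigr)\frac{K(\alpha_n^*)}{\Gamma(\alpha_n^*)}\tbinom{\alpha_n^*+1}{2}^{-k_n}\frac{n^{\alpha_n^*-1}}{\sqrt{k_n+1}}.
\]

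Finally, I would justify replacing $\alpha_n^*$ by $\alpha_*$ up to a $(1+o(1))$ factor. For the smooth prefactors $\sigma(\alpha)$, $K(\alpha)$, $1/\Gamma(\alpha)$, $\tfrac1\alpha+\tfrac1{\alpha+1}$ and for $\sqrt{k_n+1}\sim\sqrt{k_n}$, the replacement costs only $1+O(1/\log n)$. For the potentially dangerous factors $n^{\alpha_n^*-1}\binom{\alpha_n^*+1}{2}^{-k_n}$, a first-order Taylor expansion of $\log\binom{\alpha+1}{2}$ at $\alpha_*$, combined with the limiting saddle identity $\tfrac1{\alpha_*}+\tfrac1{\alpha_*+1}=1/c$ and $k_n=c\log n+O(1)$, gives
\[
(\alpha_n^*-\alpha_*)\log n \;-\; k_n\bigl[\log\tbinom{\alpha_n^*+1}{2}-\log\tbinom{\alpha_*+1}{2}\bigr] \;=\; (\alpha_n^*-\alpha_*)\bigl[\log n - k_n/c\bigr] + O\bigl(k_n(\alpha_n^*-\alpha_*)^2\bigr) \;=\; o(1),
\]
so that the two exponential factors in $\alpha_n^*$ agree with their $\alpha_*$-analogues up to $1+o(1)$.

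The main obstacle is the technical justification of the saddle point step: extending the transfer asymptotic for $G_n(z)$ to a uniform statement on a complex neighborhood of $z_*=\binom{\alpha_*+1}{2}$, and verifying that the part of the deformed contour far from the saddle contributes negligibly. A secondary and minor subtlety is that $\alpha_*$ may cross a half-integer as $c$ varies (e.g.\ $\alpha_*=\tfrac12$ at $c=\tfrac38$); at such values Corollary \ref{cor:AsymptoticsG2} replaces Corollary \ref{cor:AsymptoticsG1}, but the leading term $K(\alpha)/\Gamma(\alpha)\cdot n^{\alpha-1}$ is identical in both, so only the subleading corrections differ and the saddle point analysis proceeds unchanged.
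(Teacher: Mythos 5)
Your plan matches the paper's proof almost exactly: the paper also rewrites $p_{k_n}^{(n)}$ as a Cauchy contour integral in the $\alpha$-variable (using a Lagrange--Bürmann change of variables equivalent to your substitution $z=\tbinom{\alpha+1}{2}$, landing on the identical integrand $\left(\tfrac1\alpha+\tfrac1{\alpha+1}\right)\tfrac{K(\alpha)}{\Gamma(\alpha)}\tbinom{\alpha+1}{2}^{-k_n}n^{\alpha-1}$) and then performs a saddle-point evaluation. The small difference that you work at the exact finite-$n$ saddle $\alpha_n^*$ and then shift to $\alpha_*$, whereas the paper centres the Laplace expansion at $\alpha_*$ directly and absorbs $S_n'(\alpha_*)=O(1/\log n)$ into the error, is cosmetic and both versions of the last step are sound.

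There is, however, one genuine gap: the saddle equation $1=c\left(\tfrac1\alpha+\tfrac1{\alpha+1}\right)$ is quadratic, so besides $\alpha_*>0$ it has a second real root $\tilde\alpha_*=c-\tfrac12-\tfrac12\sqrt{4c^2+1}\in(-1,-\tfrac12)$. The contour (in the paper a rectangle with vertical edges through $\alpha_*$ and $\tilde\alpha_*$; your deformed loop around $0$ must pass near both roots as well, since they sit on either side of the pole at $\alpha=0$) therefore receives a contribution from this second saddle of order $n^{\tilde\alpha_*-1-c\log|\binom{\tilde\alpha_*+1}{2}|}/\sqrt{\log n}$. The stated restriction $c\in(0,\tfrac34)$ in the theorem is precisely what guarantees
\[
\alpha_*-1-c\log\tbinom{\alpha_*+1}{2} \;>\; \tilde\alpha_*-1-c\log\Bigl|\tbinom{\tilde\alpha_*+1}{2}\Bigr|,
\]
so that the $\alpha_*$-saddle dominates. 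Your proposal never mentions the second saddle or explains where the hypothesis $c<\tfrac34$ enters, and the vague remark that ``the part of the deformed contour far from the saddle contributes negligibly'' does not cover it (the contribution at $\tilde\alpha_*$ is not negligible in absolute terms, only exponentially subdominant, and only for $c$ below the threshold). Filling this in is essential to justify the theorem as stated. You also need to pin down the behaviour of the integrand for $|\mathrm{Im}\,\alpha|$ large (the paper uses Stirling to control $|\Gamma(s+iT)|$ on a rectangle with $T_n=n^2$), rather than relying solely on the uniform transfer estimate, which is stated only on compact real subsets.
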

\begin{proof}
    As in the beginning of the proof of Theorem \ref{thm:probab} and using the Cauchy integral formula, we derive
    \begin{align*}
        p_{k_n}^{(n)} &= [z^{k_n}] G_n(z) \\
        &= [\alpha^{k_n}] \frac{1+2\alpha}{1+\alpha}\frac{K(\alpha)}{\Gamma(\alpha)} \left(\frac{2}{1+\alpha}\right)^{k_n} n^{\alpha-1} (1+O(1/n))\\
        &= \frac{1}{2\pi i} \oint \left(\frac{1}{\alpha}+\frac{1}{1+\alpha}\right)\frac{K(\alpha)}{\Gamma(\alpha)} \left(\frac{2}{\alpha(1+\alpha)}\right)^{k_n} 
             n^{\alpha-1} \,\dint \alpha\, (1+O(1/n))\\
        &= \frac{1}{2\pi i} \oint H(\alpha) \binom{\alpha+1}{2}^{-k_n}
            n^{\alpha-1} \, \dint\alpha\, (1+O(1/n)),
    \end{align*}
    where we set
    \begin{equation*}
        H(\alpha) := \left(\frac{1}{\alpha}+\frac{1}{\alpha+1}\right) \frac{K(\alpha)}{\Gamma(\alpha)} = \frac{1}{(1+\alpha)^2} \frac{\Gamma(2+2\alpha)}{\Gamma(1+\alpha)^3}.
    \end{equation*}
    The complex contour integral is over any simple loop $\alpha(t)$ in $\CC$ around $0$ with $\mathrm{Im}\, \alpha>-1$, since the integrand has a pole at $\alpha=-1$ and is analytic for $\mathrm{Im}\, \alpha>-1$ apart from the pole at $\alpha=0$ of order $k_n$.

By Cauchy's Theorem we may choose to consider the contour integral over a rectangle with vertices $-\sigma\pm i T$ and $b\pm T$ for some $\sigma\in (0,1)$ and large $T>0$.
We will show that the main contribution of the contour is given by the vertical contour $b+iT$, see Figure \ref{fig:graph_In}.

\begin{figure}[ht]
    \centering
    \begin{tikzpicture}
        \begin{scope}
            \clip (-2.5,-2.5) rectangle (2.5,2.5);
            \node at (0,0) {\includegraphics[width=5.5cm]{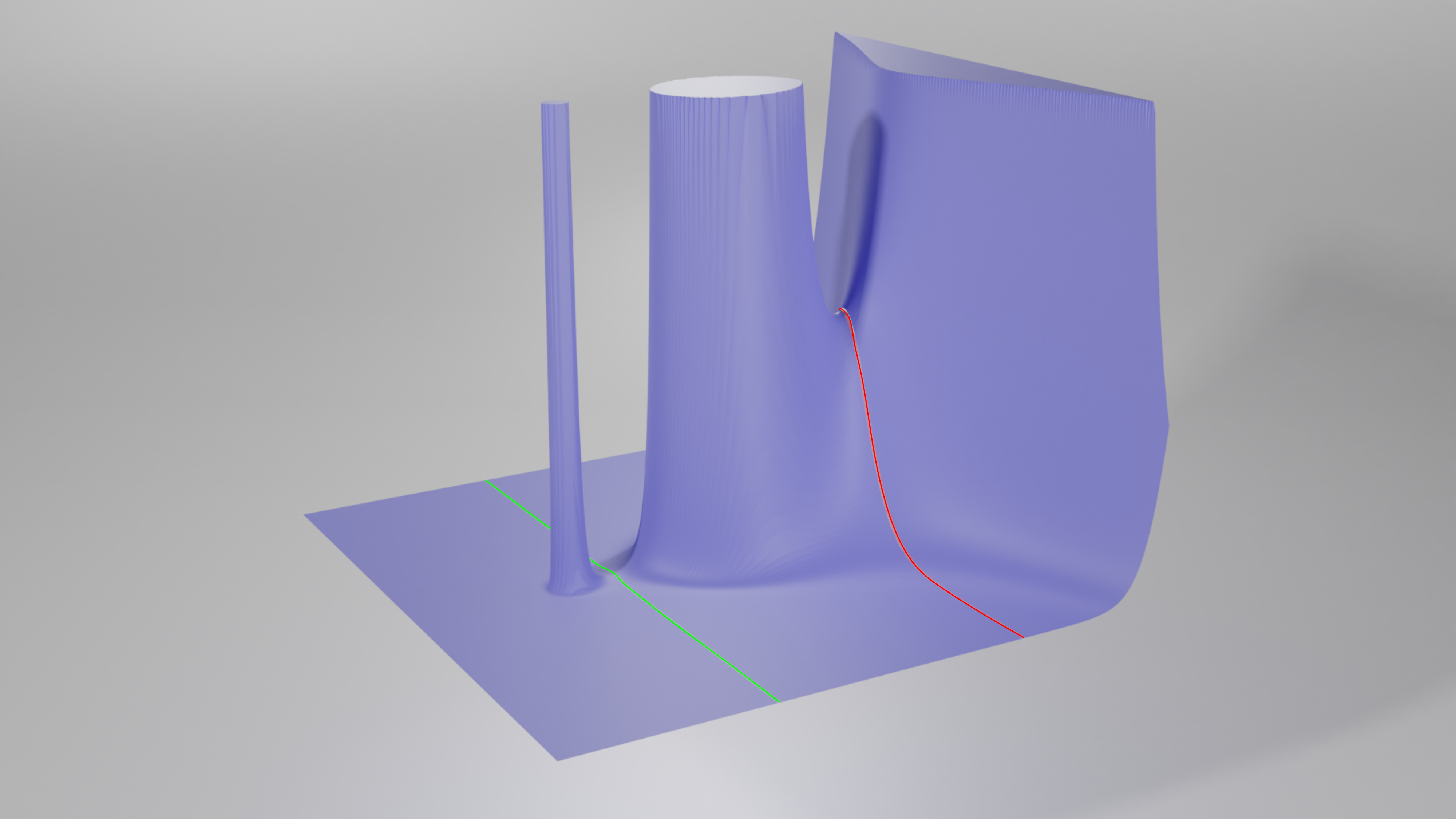}};
        \end{scope}
        
        \begin{scope}[xshift=6cm]
            \clip (-2.5,-2.5) rectangle (2.5,2.5);
            \node at (0,0) {\includegraphics[width=5.5cm]{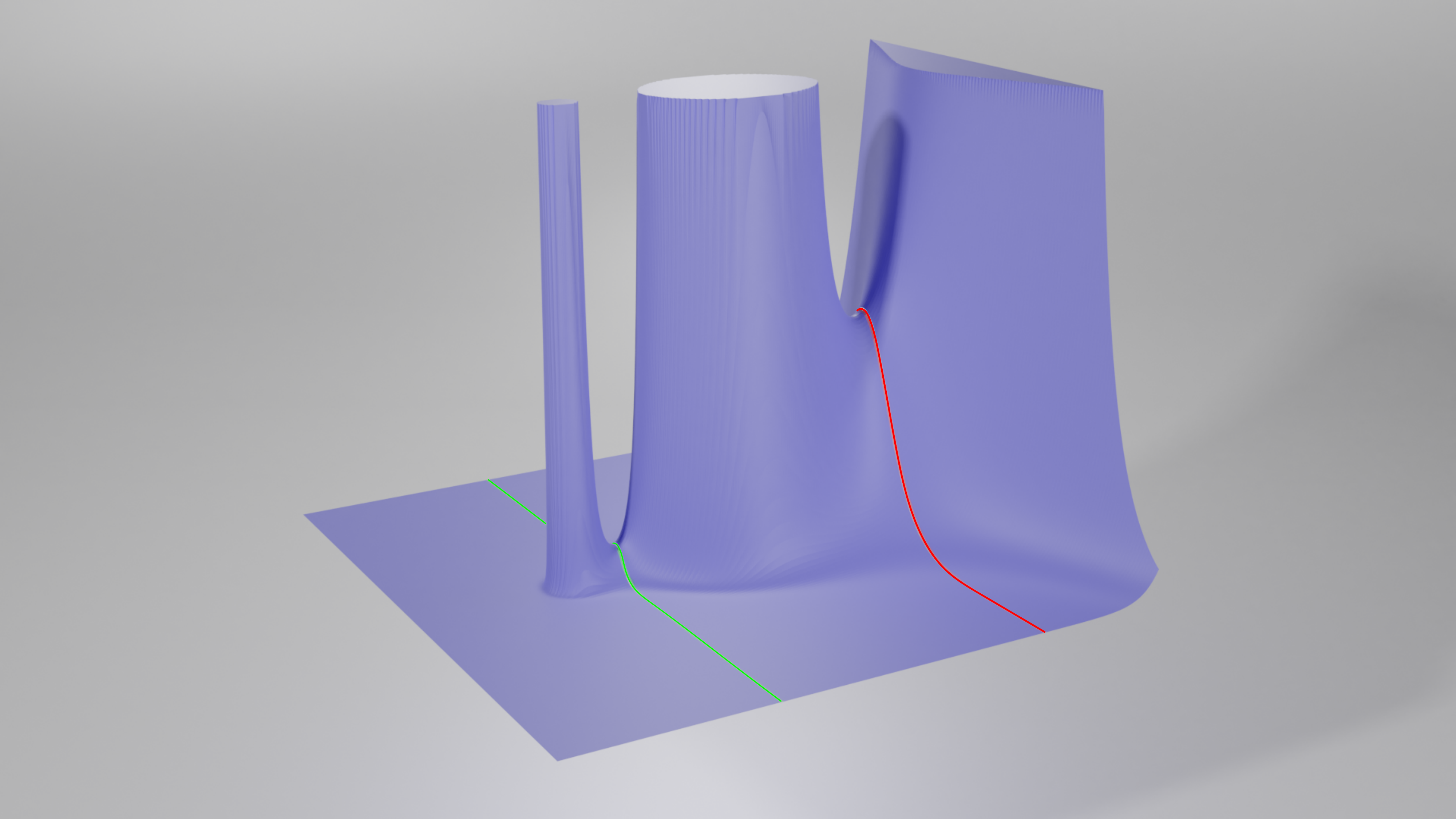}};
        \end{scope}
        
        \begin{scope}[xshift=12cm]
            \clip (-2.5,-2.5) rectangle (2.5,2.5);
            \node at (0.2,0) {\includegraphics[width=5.9cm]{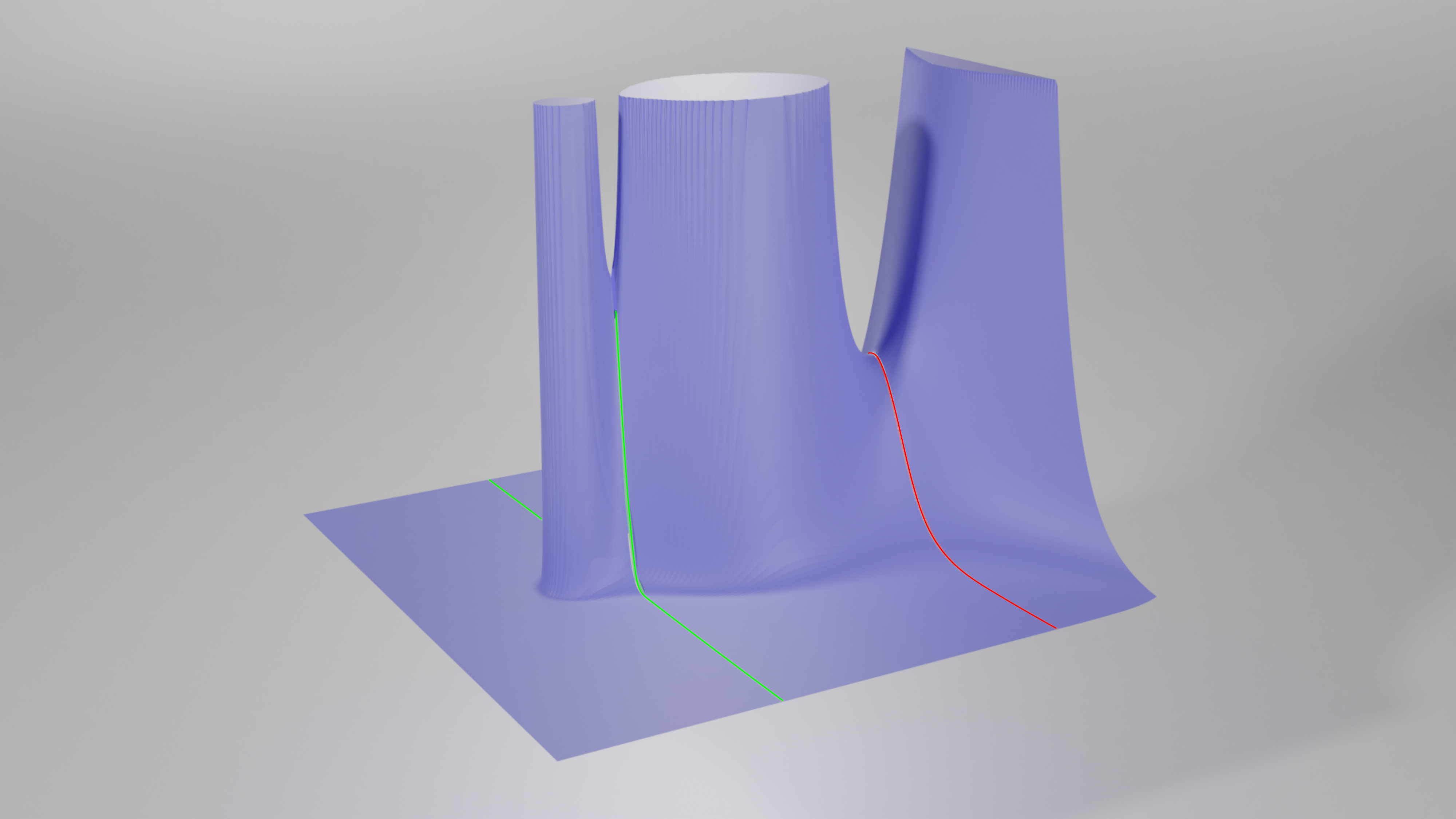}};
        \end{scope}
    \end{tikzpicture}
    \caption{Graph of $|I_n(s+it)|$ for $n=10^6$ and $c=2/3$ (left), $c=3/4$ (middle), and $c=0.8$ (right). The green curve is the section $s\equiv \tilde{\alpha}_*$ and the red curve is the section $s\equiv \alpha_*$.}
    \label{fig:graph_In}
\end{figure}

\medskip 
\emph{Estimating the horizontal contours:} 
By Stirling's formula we may estimate for all $s>0$,
\begin{align*}
    |\Gamma(s+iT)| &= e^{-\frac{\pi}{2} |T| (1+o(1))} \quad \text{as $|T| \to \infty$}
\end{align*}
which yields for all $s\in [-\sigma,b]\subset (-1,+\infty)$
\begin{equation*}
    |H(s+iT)| \leq e^{C_1|T|},
\end{equation*}
for some $C_1=C_1(\sigma,b)>0$ and $|T|$ large enough.
Furthermore,
\begin{equation*}
    \left|\binom{1+s+iT}{2}^{-k_n}\right| 
    \leq n^{-C_2|T|},
\end{equation*}
for some $C_2=C_2(\sigma,b,c)>0$, and
\begin{equation*}
    |n^{s+iT-1}| = n^{s-1}\leq n^{b-1}.
\end{equation*}
Writing
\begin{equation*}
    I_n(\alpha) := H(\alpha)\, \binom{\alpha+1}{2}^{-k_n}\, n^{\alpha-1}, 
\end{equation*}
we conclude
\begin{equation*}
    \left|\int_{-\sigma}^b I_n(s+iT)\, \dint s \right| \leq (b+\sigma) n^{C_1\frac{|T|}{\log n}-C_2|T|+b-1}.
\end{equation*}
So for $T=T_n:=n^2$ we find
\begin{equation*}
    \left|\frac{1}{2\pi i} \int_{-\sigma}^b I_n(s+iT_n)\, \dint s\, (1+O(1/n)) \right| \leq n^{-C_3 n^2},
\end{equation*}
for some $C_3=C_3(c,b,\sigma)>0$.
Thus, the horizontal parts of our rectangle will be asymptotically small.
\medskip

\emph{Estimating the vertical contours:}
Next, we want to estimate the vertical parts
\begin{equation*}
    \frac{1}{2\pi i}\int_{\mathrm{Re}\, \alpha=s_0, |\mathrm{Im}\,\alpha|\leq T_n} I_n(\alpha)\, \dint \alpha\, (1+O(1/n)),
\end{equation*}
for suitable fixed $s_0\in (-1,+\infty)$.
For this, we write
    \begin{align*}
        I_n(\alpha)&= n^{S_n(\alpha)}
    \end{align*}
where we set
\begin{equation*}
    S_n(\alpha) := \alpha - 1 + \frac{\log H(\alpha)}{\log n}-(\log \alpha) \frac{k_n}{\log n} - \log\left(\frac{1+\alpha}{2}\right)\frac{k_n}{\log n}.
\end{equation*}
Then
\begin{equation*}
    S_n'(\alpha) = 1 + \frac{H'(\alpha)}{H(\alpha)\log n} - \frac{1}{\alpha} \frac{k_n}{\log n} - \frac{1}{1+\alpha} \frac{k_n}{\log n} + O\left(\frac{1}{\log n}\right).
\end{equation*}
We set
\begin{equation*}
    b=\alpha_* := c-\frac{1}{2}+\frac{1}{2}\sqrt{4c^2+1} >0 \quad \text{and}\quad
    -\sigma=\tilde{\alpha}_* := c-\frac{1}{2}-\frac{1}{2}\sqrt{4c^2+1} \in (-1,-1/2),
\end{equation*}
and conclude that for $\alpha^\dag\in\{\alpha_*,\tilde{\alpha}_*\}$
\begin{equation*}
    \lim_{n\to\infty} S_n'(\alpha^\dag) = 1-c\left(\frac{1}{\alpha^\dag} +\frac{1}{1+\alpha^\dag}\right)=0,
\end{equation*}
which in turn yields
    \begin{equation*}
        S_n'(\alpha^\dag) = \frac{H'(\alpha^\dag)}{H(\alpha^\dag)\log n} + \left(\frac{1}{\alpha^\dag} +\frac{1}{\alpha^\dag+1}\right) \frac{c\log n-\lfloor c\log n\rfloor}{\log n} 
             + O\left(\frac{1}{\log n}\right) 
            = O\left(\frac{1}{\log n}\right).
    \end{equation*}
    Furthermore,
    \begin{equation*}
        S''_n(\alpha^\dag) = \left(\frac{1}{(\alpha^\dag)^2} + \frac{1}{(1+\alpha^\dag)^2}\right) \frac{k_n}{\log n} + O\left(\frac{1}{\log n}\right) 
        = \frac{c}{\sigma(\alpha^\dag)^2} \left(1+ O\left(\frac{1}{\log n}\right)\right).
    \end{equation*}
    It follows that
    \begin{equation*}
        S_n(\alpha) = S_n(\alpha^\dag) + \frac{1}{2} S''_n(\alpha^\dag) (\alpha-\alpha^\dag)^2 + O\left(\frac{\alpha-\alpha^\dag}{\log n}\right) + O((\alpha-\alpha^\dag)^3)
    \end{equation*}
    So on the vertical contours $\mathrm{Re}\,\alpha=\alpha^\dag$ we have
    \begin{equation*}
        S_n(\alpha^\dag+it) = S_n(\alpha^\dag) - \frac{t^2}{2} S''_n(\alpha^\dag) + iO\left(\frac{t}{\log n}+t^3\right),
    \end{equation*}
    and therefore
    \begin{align*}
            &\frac{1}{2\pi} \int_{-T_n}^{T_n} I_n(\alpha^\dag+it) \, \dint t\, (1+O(1/n))\\
            &\qquad=\frac{1}{2\pi} \int_{-T_n}^{T_n} n^{S_n(\alpha^\dag)-\frac{t^2}{2} S''_n(\alpha^\dag)} e^{i O(t+(\log n)t^3)} \, \dint t\, (1+O(1/n))\\
            &\qquad= \frac{n^{S(\alpha^\dag)}}{2\pi \sqrt{(\log n)S''_n(\alpha^\dag)}} \int_{-T_n\sqrt{(\log n)S''_n(\alpha^\dag)}}^{T_n\sqrt{(\log n)S''_n(\alpha^\dag)}} e^{-s^2/2} e^{i O\left(\frac{s}{\sqrt{\log n}}\right)} \,\dint s\, (1+O(1/\sqrt{\log n}))\\
            &\qquad= \frac{\sigma(\alpha^\dag)}{2\pi} \frac{n^{S(\alpha^\dag)}}{\sqrt{k_n}} \int_{-\infty}^{+\infty} e^{-s^2/2} \left(1+O\left(\frac{s}{\sqrt{\log n}}\right)\right)\,\dint s\, (1+O(1/\sqrt{\log n}))\\
            &\qquad= \frac{\sigma(\alpha^\dag)}{\sqrt{2\pi}} \frac{n^{ S_n(\alpha^\dag)}}{\sqrt{k_n}} (1+O(1/\sqrt{\log n})).
    \end{align*}
    For $\alpha^\dag=\tilde{\alpha}_*$ we further estimate
    \begin{equation*}
        \left|\frac{1}{2\pi} \int_{-T_n}^{T_n} I_n(\tilde{\alpha}_*+it) \, \dint t\, (1+O(1/n)) \right|
        \leq \frac{C_4}{\sqrt{k_n}} n^{\tilde{\alpha}_*-1-c \log|\binom{\tilde{\alpha}_*+1}{2}|},
    \end{equation*}
    and conclude
    \begin{align*}
        p_{k_n}^{(n)} 
        &= \frac{1}{2\pi} \int_{-T_n}^{T_n} I_n(\alpha_*+it) \, \dint t (1+O(1/n)) 
            +  \frac{C_4}{\sqrt{k_n}} n^{\tilde{\alpha}_*-1-c \log|\binom{\tilde{\alpha}_*+1}{2}|} + n^{-C_3n^2}\\
        &=\frac{\sigma(\alpha_*)}{\sqrt{2\pi}} \frac{H(\alpha_*)}{\sqrt{k_n}}n^{\alpha_*-1-c\log\binom{\alpha_*+1}{2}} (1+O(1/\sqrt{\log n}))
            +  \frac{C_4}{\sqrt{k_n}} n^{\tilde{\alpha}_*-1-c \log|\binom{\tilde{\alpha}_*+1}{2}|} + n^{-C_3n^2}.
    \end{align*}
    This completes the proof, since we can check that 
    \begin{equation*}
        -I(c) = \alpha_*-1-c\log\binom{\alpha_*+1}{2} > \tilde{\alpha}_*-1-c \log\left|\binom{\tilde{\alpha}_*+1}{2}\right|,
    \end{equation*}
    for $c\in (0,c_0)$, where we choose $c_0:=\frac{3}{4}$ somewhat arbitrarily.
\end{proof}

\begin{remark}
Theorem \ref{thm:saddle} gives
\begin{equation*}
    p_{k_n}^{(n)} =
        C\frac{n^{-I(c)}}{\sqrt{\log n}} (1+o(1)) 
    = n^{-I(c)+o(1)},
\end{equation*}
where $C=C(c)>0$ is an explicit constant and $I$ is again the rate function from the large deviation principle in Theorem \ref{thm:LDP}.
Notice that
\begin{equation*}
    \lim_{c\to 0^+} -I(c) = -1,
\end{equation*}
and for $c_n:=\frac{n}{\log n}$,
\begin{equation*}
    -I(c_n) = -2n (1+o(1)).
\end{equation*}
Thus the boundary cases of Theorem \ref{thm:saddle} connect with the asymptotic rate derived for $p_{k_n}^{(n)}$ in Corollary \ref{cor:probab}.
For the unique minimum of $I(c_*)=0$ at $c_*=2/3$ we derive for $k_n=\lfloor c_*\log n\rfloor$ that
$\alpha_*=1$, and therefore
\begin{equation*}
    p_{k_n}^{(n)} = \sqrt{\frac{27}{10}} \frac{1}{\sqrt{2\pi \log n}} (1+o(1)).
\end{equation*}
\end{remark}

\subsection*{Acknowledgement}
CT has been supported by the DFG through SPP 2458 \textit{Combinatorial Synergies}.

\bibliographystyle{plain}
\bibliography{References}

\end{document}